\newtheorem{theorem}{Theorem}
\newtheorem{corollary}[theorem]{Corollary}
\newtheorem{lemma}[theorem]{Lemma}
\newtheorem*{conjecture}{The Lyons-Peres Conjecture}
\newtheorem*{conj}{Conjecture}
\newtheorem*{conj-GAF}{Lyons-Peres conjecture for GAF zeros}
\newtheorem{proposition}[theorem]{Proposition}
\newtheorem{remark}[theorem]{Remark}
\newtheorem{definition}[theorem]{Definition}
\newtheorem*{definition*}{Definition}
\newtheorem*{convention*}{Convention}
\newtheorem*{remark*}{Remark}
\newtheorem*{theorem*}{Theorem}
\newtheorem*{claim*}{Claim}
\newtheorem*{notation*}{Notation}
\numberwithin{theorem}{section}
\numberwithin{figure}{section}
\numberwithin{equation}{section}
\begin{document}

\title{Kernels of  conditional determinantal measures and the proof of the Lyons-Peres Conjecture}

\author{Alexander I. Bufetov, Yanqi Qiu and Alexander Shamov}


%
%
\maketitle

\abstract{The main result  of this paper, Theorem \ref{mainthm-DT}, establishes a conjecture of Lyons and Peres: for a determinantal point process governed by a reproducing kernel, the system of kernels sampled at the particles of a random configuration is complete in the range of the  kernel.  A key step in the proof, 
Lemma \ref{key-lem}, states  that 
conditioning on  the configuration in a subset preserves the determinantal property, and the main Lemma \ref{main-local} is a new local property for  kernels of  conditional  point processes.
 In Theorem \ref{main-thm3} we prove the triviality of  the tail sigma-algebra  for determinantal  point processes governed by self-adjoint kernels. 
 \\
\noindent \textbf{Keywords:} Determinantal point processes, Lyons-Peres conjecture, conditional measures, tail triviality,   measure-valued martingales, operator-valued martingales.}

\newcommand{\R}{\mathbb{R}}
\newcommand{\N}{\mathbb{N}}
\newcommand{\Z}{\mathbb{Z}}
\newcommand{\Q}{\mathbb{Q}}
\newcommand{\D}{\mathbb{D}}
\newcommand{\C}{\mathbb{C}}
\newcommand{\T}{\mathbb{T}}
\newcommand{\E}{\mathbb{E}}
\newcommand{\PP}{\mathbb{P}}
\newcommand{\Prob}{\mathbb{P}}

\newcommand{\F}{\mathbb{F}}

\newcommand{\X}{\mathscr{X}}
\newcommand{\M}{\mathcal{M}}
\newcommand{\ZZ}{\mathcal{Z}}
\newcommand{\K}{\mathcal{K}}
\newcommand{\Leb}{\mathrm{Leb}}
\newcommand{\Conf}{\mathrm{Conf}}
\newcommand{\Tr}{\mathrm{Tr}}
\newcommand{\tr}{\mathrm{tr}}
\newcommand{\rank}{\mathrm{rank}}
\newcommand{\spec}{\mathrm{spec}}
\newcommand{\Var}{\mathrm{Var}}
\newcommand{\supp}{\mathrm{supp}}
\newcommand{\spann}{\mathrm{span}}
\newcommand{\esssup}{\mathrm{esssup}}
\newcommand{\sgn}{\mathrm{sgn}}

\newcommand{\Det}{\mathrm{det}}
\newcommand{\Ran}{\mathrm{Ran}}

\newcommand{\an}{\text{\, and \,}}
\newcommand{\as}{\text{\, as \,}}

\newcommand{\ch}{\mathbbm{1}}

\newcommand{\dd}{\mathrm{d}}
\newcommand{\Aut}{\mathrm{Aut}}

\renewcommand{\restriction}{\mathord{\upharpoonright}}

\section{Introduction}
\subsection{The zero set of a Gaussian Analytic Function on the disc is a uniqueness set for the Bergman space}
Consider the random series 
\begin{equation}\label{gaus-series}
\frak{f}_\D(z) = \sum_{n=0}^\infty f_n z^n,
\end{equation}
where the coefficients $f_n$ are independent identically distributed complex Gaussian random variables with expectation $0$ and variance $1$. 
The series (\ref{gaus-series}) has  radius of convergence $1$  almost surely and defines a holomorphic function on the open unit disc $\D$. 
Let  $Z(\frak{f}_\D)$ be the zero set of (\ref{gaus-series}):
\begin{align}\label{def-GAF}
Z(\frak{f}_\D) = \{z \in \D: \frak{f}_\D(z) =0\}.
\end{align}

Denote $A^2(\D)$ the Bergman space of  holomorphic functions on $\D$ square-integrable with respect to the Lebesgue measure $\Leb$.
A subset $X\subset \D$ is called a {\it uniqueness set} for $A^2(\D)$ if a function $h \in A^2(\D)$ satisfying $h\restriction_X = 0$ must be the zero function.  In this particular case, our main result is
\begin{theorem}\label{thm-Zf}
Almost surely, $Z(\frak{f}_\D)$ is a  uniqueness set for $A^2(\D)$. 
\end{theorem}
In other words, almost surely, $Z(\frak{f}_\D)$ can not be  a zero set of a function in $A^2(\D)$. Theorem \ref{thm-Zf} is a direct corollary of our main result, Theorem \ref{mainthm-DT}, formulated  below, 
 since,  by the  Peres-Vir{\'a}g Theorem \cite{PV-acta}, the random subset $Z(\frak{f}_\D) \subset \D$ is a realization of the determinantal point process on $\D$ governed by the reproducing kernel 
\[
K_\D (z, w) = \frac{1}{\pi (1 - z \bar{w})^2}
\]
 of the subspace $A^2(\D)\subset L^2(\D, \Leb)$.

\begin{remark}
After the work on this paper was finished, we became aware of the result of Lyons and Zhai \cite{Lyons-Zhai}, where they prove in particular in a different way that $Z(\mathfrak{f}_\D)$ is almost surely a uniqueness set for $A^2(\D)$. 
\end{remark}

For brevity, the set of zeros of a nonzero function in $A^2(\D)$ will be called an $A^2(\D)$-zero set. Various necessary and sufficient conditions for a subset of the disc
to be an $A^2(\D)$-zero set exist in the literature. For example, by
Theorem 4.7 in  Hedenmalm-Korenblum-Zhu \cite{HKZ-Bergman}, for any  $A^2(\D)$-zero set  $Z$  
 the Blaschke-type condition
\begin{align}\label{B-cond}
\sum\limits_{z \in Z, z\neq 0}  \frac{1 - | z|}{\Big[\log \frac{1}{1 - |z|}\Big]^{1+\varepsilon}}<\infty
\end{align}
holds for any $\varepsilon> 0$. Note, however, that for our random set $Z(\frak{f}_\D)$, using the determinantal structure of $Z(\frak{f}_\D)$, we have
\[
\E \Big(\sum_{z \in Z(\frak{f}_\D)} \frac{1 -|z|}{\big[  \log \frac{1}{1 - |z|}\big]^{1 + \varepsilon}} \Big)=  \int_\D  \frac{1 -|z|}{\big[  \log \frac{1}{1 - |z|}\big]^{1 + \varepsilon}} \frac{1}{\pi (1 - |z|^2)^2} dA(z) < \infty, 
\]
 hence $Z(\frak{f}_\D)$ satisfies  \eqref{B-cond} almost surely. Indeed,  $A^2(\D)$-zero sets cannot be described in terms of conditions involving radii alone, the angular distribution of the zeros plays a role.
 
We next observe that $Z(\frak{f}_\D)$ is neither a sampling nor an interpolating set for $A^2(\D)$. 
 Recall that a  discrete subset $Z\subset \D$ is called an $A^2(\D)$-sampling set if there exist positive constants $C_1, C_2>0$ such that for any $g \in A^2(\D)$
we have 
\[
C_1 \|g \|_{A^2(\D)}^2 \le \sum\limits_{z\in Z} | g (z)|^2 (1 -|z|^2)^2 \le C_2 \| g \|_{A^2(\D)}^2.
\]
By definition, an $A^2(\D)$-sampling set is also an $A^2(\D)$-uniqueness set.
A discrete subset  $Z\subset \D$, $Z=\{z_1, \dots, z_j, \dots\}$  is called an $A^2(\D)$-interpolating set if for any sequence $\{a_j\}$ in $\C$ such that $\{a_j (1 - |z_j|^2)\} \in \ell^2$, there exists $g\in A^2(\D)$ such that $g(z_j) = a_j$ for all $j$. Deleting a finite number of points from any uniqueness set for $A^2(\D)$ does not change the uniqueness property of the set, consequently a function in $A^2(\D)$ can not vanish at all points of a uniqueness set except at a finite subset, which means that  a uniqueness set for $A^2(\D)$ is never $A^2(\D)$-interpolating. 

\begin{proposition}\label{sample-inter}
The subset $Z(\frak{f}_\D)$ is almost surely neither $A^2(\D)$-sampling nor $A^2(\D)$-interpolating.
\end{proposition}
To see that the set $Z(\frak{f}_\D)$ is not sampling, recall Seip's theorem, Theorem 7.1 in \cite{Seip-d-zero},  stating that any $A^2(\D)$-sampling set can be expressed as a finite union of sets uniformly separated with respect to the Lobachevskian distance. 
\begin{lemma}\label{non-unif-sep}
Almost surely, $Z(\frak{f}_\D)$ can not be expressed as a finite union of uniformly separated sets. 
\end{lemma}

Lemma \ref{non-unif-sep}, proved in Section \ref{sample} below with the use of mixing, under the measure $\Prob_{K_{\D}}$, of the action of one-parametre groups of isometries of the Lobachevsky plane, implies Proposition \ref{sample-inter}.

\subsection{An outline  of the main results}
\subsubsection{The Lyons-Peres Conjecture}
 Let $E$ be a locally compact  $\sigma$-compact Polish space, let $\Conf(E)$ be the space of  locally finite configurations on $E$.  Let  $\mu$ be a sigma-finite Radon measure on $E$,  let $K$ be the kernel of a locally trace class positive contraction acting in the complex Hilbert space $L^2(E, \mu)$, and let $\Prob_K$ be the corresponding determinantal measure on $\Conf(E)$ (the precise definitions are recalled in the next subsection).

Let $K$ be a locally trace class orthogonal projection onto a closed subspace  $H$ of $L^2(E,\mu)$; in other words,  let $H\subset L^2(E, \mu)$ be a  reproducing kernel Hilbert space, and let  $K$ be the reproducing kernel for $H$.
For $x\in E$, introduce a function $K_x\in L^2(E, \mu)$ by the formula
\begin{equation}\label{kxt}
K_x(t) := K(t,x), \ t\in E.
\end{equation}
Our main result,  Theorem \ref{mainthm-DT}, establishes 
\begin{conjecture}
For  $\PP_K$-almost every $X \in \Conf(E)$, we have
\begin{align}\label{lyons-equality}
\overline{\spann}^{L^2(E, \mu)}  \{K_x; x\in X \}=H.
\end{align}
\end{conjecture}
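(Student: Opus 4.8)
Since $K$ is the reproducing kernel of $H$, for $f\in H$ one has $f(x)=\langle f,K_x\rangle_{L^2(E,\mu)}$, so
\[
\Bigl(\overline{\spann}\{K_x:x\in X\}\Bigr)^{\perp}\cap H \;=\; G_X:=\bigl\{f\in H:\ f(x)=0\ \text{for every}\ x\in X\bigr\},
\]
and \eqref{lyons-equality} is equivalent to $G_X=\{0\}$ for $\PP_K$-almost every $X$. If $\dim H<\infty$ this is immediate: $\PP_K$-a.s.\ the configuration $X$ has exactly $\dim H$ points, and its distribution has density proportional to $\det[K(x_i,x_j)]$, the Gram determinant of $K_{x_1},\dots,K_{x_{\dim H}}$, so these vectors are $\PP_K$-a.s.\ linearly independent and therefore span $H$. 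Assume from now on $\dim H=\infty$, fix a countable dense subset $\{g_j\}_{j\ge1}\subset H$, and for each $j$ put
\[
\phi_j(X):=\mathrm{dist}_{L^2(E,\mu)}\bigl(g_j,\ \overline{\spann}\{K_x:x\in X\}\bigr)^2 ,
\]
a nonnegative functional bounded by $\|g_j\|^2$ and nonincreasing under enlarging the configuration. It suffices to prove $\E_{\PP_K}[\phi_j]=0$ for every $j$, since then $\phi_j=0$ $\PP_K$-a.s.\ for all $j$, whence $\overline{\spann}\{K_x:x\in X\}\supseteq\overline{\spann}\{g_j\}=H$ $\PP_K$-a.s.

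Fix an exhaustion $B_1\subset B_2\subset\cdots$ of $E$ by relatively compact Borel sets with $\bigcup_nB_n=E$. Since $\overline{\spann}\{K_x:x\in X\cap B_n\}$ increases to $\overline{\spann}\{K_x:x\in X\}$, monotone convergence reduces the claim to
\[
\lim_{n\to\infty}\ \E_{\PP_K}\!\Bigl[\mathrm{dist}\bigl(g_j,\ \overline{\spann}\{K_x:x\in X\cap B_n\}\bigr)^2\Bigr]=0 .
\]
I would compute the $n$-th expectation by conditioning on $X\cap B_n^{c}$: by Lemma \ref{key-lem} the process $X\cap B_n$ is then determinantal on $B_n$ with a kernel $K^{(n)}=K^{B_n}(\,\cdot\,;X\cap B_n^{c})$, and by Lemma \ref{main-local} this conditional kernel inherits the relevant local property — in particular its diagonal is $\mu$-a.e.\ positive on $B_n$, so particles may fall anywhere in $B_n$ and the range of $K^{(n)}$ fills out the corresponding conditional reproducing-kernel space. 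Thus the $n$-th expectation is the $\PP_K$-average, over the exterior configuration $\mathcal W=X\cap B_n^{c}$, of a defect quantity for the determinantal process with kernel $K^{(n)}_{\mathcal W}$ on the \emph{bounded} set $B_n$, and the local property of $K^{(n)}_{\mathcal W}$ is precisely what makes that defect controllable and lets it be shown to tend to $0$ on average. To pass to the limit correctly one uses that the operators $K^{(n)}$, suitably embedded into $L^2(E,\mu)$, form an operator-valued martingale, while the $\sigma$-algebras $\sigma(X\cap B_n^{c})$ decrease to the tail $\sigma$-algebra; by Theorem \ref{main-thm3} the latter is $\PP_K$-trivial, which forbids the conditional defects from stabilizing at a positive value and yields the displayed limit.

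The main obstacle is this middle step. One must first establish that conditioning on the configuration in the complement of a bounded set preserves the determinantal property and does so with a kernel that still possesses the local positivity / full-range property rather than degenerating — this is the content of Lemma \ref{key-lem} and Lemma \ref{main-local} — and then convert that purely local, conditional information into the quantitative global vanishing $\E_{\PP_K}[\phi_j]=0$. The two delicate points are: (i) the conditional kernels $K^{(n)}$ are no longer finite-rank objects and must be controlled uniformly as the window grows, which is exactly where the operator-valued martingale and the tail triviality of Theorem \ref{main-thm3} become indispensable; and (ii) the estimate on the conditional defect must be extracted from the local structure of $K^{(n)}$, so that the argument does not covertly presuppose the completeness statement it is meant to prove.
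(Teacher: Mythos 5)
Your reduction of \eqref{lyons-equality} to $G_X=\{0\}$, the passage to a countable dense family $\{g_j\}$, and the monotone-convergence reduction to $\lim_n\E_{\PP_K}\bigl[\mathrm{dist}(g_j,\overline{\spann}\{K_x:x\in X\cap B_n\})^2\bigr]=0$ are all legitimate. The problem is that this last limit --- which you yourself flag as ``the main obstacle'' --- is never actually proved, and the two mechanisms you invoke for it do not supply a proof. First, tail triviality (Theorem \ref{main-thm3}) cannot ``forbid the conditional defects from stabilizing at a positive value'': the defect $\phi_j$ is not (obviously) tail-measurable, since deleting the particles of $X$ inside a bounded set removes vectors from the span and can a priori strictly increase the distance; and even if $\phi_j$ were tail-measurable, triviality would only force it to be a.s.\ constant, not to vanish. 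Second, the assertion that by Lemma \ref{main-local} the conditional kernel has ``$\mu$-a.e.\ positive diagonal'' so that ``the range of $K^{(n)}$ fills out the corresponding conditional reproducing-kernel space'' is not what Lemma \ref{main-local} states (that lemma is about compressions $QKQ$ by projections with range in $L^2(E\setminus B,\mu)$), and in any case it is a qualitative remark, not an estimate on $\E_{\PP_K}[\phi_j]$. As written, the argument is a plan whose decisive quantitative step is missing.

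The paper closes this gap by a pointwise contradiction of a quite different nature, and you should compare it with your averaged scheme. Suppose $0\neq h\in H$ vanishes on $X$. One picks, from a fixed countable family of balls, a ball $S_n$ with $X\cap S_n=\emptyset$ and $h\restriction_{S_n}\neq0$, and conditions on the configuration \emph{outside} $S_n$. Lemma \ref{lem-id} (proved by approximating $E\setminus S_n$ by bounded sets and using that the finite-window conditional kernels are projections fixing $\chi_{E\setminus E_k}L(X\cap E_k)$) shows that $\chi_{S_n}h$ is a fixed vector of $K^{[X,\,S_n^c]}$, hence $\det(1-K^{[X,\,S_n^c]})=0$. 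But this Fredholm determinant equals the conditional gap probability $\PP_K(\#_{S_n}=0\mid X,S_n^c)$, i.e.\ the conditional probability of the particle count actually observed in $S_n$, which by the elementary Lemma \ref{lem-good-rel} is positive for $\PP_K$-almost every $X$ --- a contradiction. This eigenvalue-$1$ versus positive-gap-probability dichotomy is precisely the idea your outline lacks; if you insist on the averaged route you would need an explicit bound on the conditional defect (in the spirit of the variance estimate of Lemma \ref{lem-var}), which is genuinely additional work and is not supplied by Lemmata \ref{key-lem} and \ref{main-local} alone.
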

  Lyons \cite[Theorem 7.11]{DPP-L}  proved that the completeness of reproducing kernels holds when $E$ is  countable and formulated the general statement as 
  Conjecture 4.6 in \cite {Lyons-ICM}. Ghosh \cite{Ghosh-sine}  established the conjecture  under the important additional assumption that the  determinantal point process $\PP_K$ is number rigid in the sense of Ghosh and Peres.   
  While many determinantal point processes are indeed number rigid  (cf.  Ghosh \cite{Ghosh-sine} for the sine-process, 
  Ghosh and Peres \cite{Ghosh-rigid} for the Ginibre ensemble,  \cite{Buf-rigid} for processes governed by the Airy, the Bessel and more general integrable kernels, \cite{BDQ-ETDS} for stationary processes, \cite{BQ-CMP} for generalized Ginibre ensembles),
 our zero set $Z(\frak{f}_\D)$ is not: indeed, Holroyd and Soo \cite{Soo} showed that the point process $Z(\frak{f}_\D)$ is insertion and deletion tolerant,  the opposite  of being number rigid. For determinantal point processes associated with generalized Bergman spaces on $\D$,  insertion and deletion tolerance is established in \cite{BQ-CMP} and the Radon-Nikodym derivative 
of the Palm measure with respect to the initial measure is given explicitly as a generalized multiplicative functional.

\subsubsection{Kernels of conditional  determinantal point processes}

The key steps in our proof of the Lyons-Peres Conjecture are the preservation of the determinantal structure under conditioning with respect to the configuration in a subset  and  a new {\it local property}
for conditional kernels of determinantal point processes. 

Given  a Borel probability measure $\PP$ on $\Conf(E)$ and a Borel subset $C\subset E$, the measure $\mathbb{P}(\cdot | X; C)$ on the space $\Conf(E\setminus C)$ is defined as the conditional measure of $\mathbb{P}$ with respect to the condition that the restriction of our random configuration onto $C$ coincides with $X\cap C$ (see \S 2 below for the detailed definition).

Lemma \ref{key-lem} establishes  that, for any determinantal point process $\PP_K$ induced by a self-adjoint locally trace class kernel $K$,  the conditional measures $\Prob_K(\cdot | X; C)$ are themselves determinantal and governed by  explicitly given  self-adjoint kernels.
 For {\it precompact} subset $B\subset E$, the determinantal property for $\Prob_K(\cdot | X; B)$ follows from the characterization of Palm measures for determinantal processes due to Shirai-Takahashi \cite{ST-palm} and the characterization of induced determinantal processes \cite{Buf-umn}, \cite{Buf-inf}.
For $X\in \Conf(E)$,
in Definition \ref{def-canonical-kernel} below we introduce  a specific
self-adjoint  kernel $K^{[X, \, B]}$ governing the measure $\mathbb{P}_K(\cdot |X; B)$.

In order to prove that conditioning preserves the determinantal property, we shall show that, along an increasing or a decreasing sequence of precompact subsets $B_n$,  our specifically chosen kernels $K^{[X, \, B_n]}$ form a martingale after a suitable compression.
The one-step martingale property (corresponding to the case of two precompact subsets $B_0 = \emptyset$ and $B_1 = B$) for spanning trees is due to Benjamini, Lyons, Peres and Schramm
 \cite{BLPS} and  for processes on general discrete phase spaces to Lyons \cite{DPP-L}. It seems to be essential for the argument of Benjamini, Lyons, Peres and Schramm  \cite{BLPS}, Lyons \cite{DPP-L} that the phase space be discrete;
we do not see how to extend their argument to continuous phase spaces. Moreover, it requires some effort to prove fully the martingale property from the one-step martingale property.

Our proof of the martingale property of the conditional kernels relies on a new local property for the kernels $K^{[X, \, B]}$ which we now informally explain.
 If $B\subset C\subset E$, then conditioning
on the restriction of the configuration onto $B$ commutes with the natural projection map $X\to X\cap C$ from $\Conf(E)$ to $\Conf(C)$. This commutativity manifests itself on the level of the  kernels chosen in Definition \ref{def-canonical-kernel} below:
we have
$
\chi_CK^{[X\cap B,\, B]}\chi_C=(\chi_CK\chi_C)^{[X\cap C, \, B]}.
$
Our local property claims that instead of $\chi_C$
one can take a much more general projection $Q$, and the relation still holds.
More precisely, let  $Q: L^2(E, \mu) \rightarrow L^2(E,\mu)$ be an orthogonal projection such that  $\Ran(Q) \subset L^2(E\setminus B, \mu)$ and
that $QKQ$  is locally trace-class. In Lemma \ref{main-local} below we shall see that
\begin{equation}\label{local-kernel}
\Big((Q + \chi_B)K (Q+\chi_B)\Big)^{[X, \, B]} = (Q  + \chi_B )K^{[X, \, B]} (Q  + \chi_B) =  Q K^{[X, \, B]} Q.
\end{equation}

 Applying  (\ref{local-kernel}) to a  one-dimensional projection operator $Q$, we obtain that, for an arbitrary $\varphi\in L_2(E\setminus B, \mu)$,  the quantity $\langle K^{[X,B]}\varphi, \varphi\rangle$ is a martingale indexed by $B$ (with respect to the partial order of inclusions of sets), cf. (\ref{exp-kxb})  below.
Using the {\it Radon-Nikodym property} for the space of trace-class operators, we obtain an operator-valued martingale that converges, along an increasing sequence of precompact subsets of $E$, almost surely in the space of locally trace-class operators. As an immediate consequence, we prove that for determinantal point processes governed by self-adjoint kernels, conditioning on the configuration in any Borel subset, preserves the determinantal property, see Lemma  \ref{key-lem}.

\subsubsection{Triviality of the  tail $\sigma$-algebra}
As an application of the local property for the conditional kernels, in Theorem \ref{main-thm3}, we  establish the triviality of the tail sigma-algebra for determinantal point processes governed by self-adjoint kernels. Lyons proved   tail-triviality in the discrete setting in \cite{DPP-L},
extending the argument of Benjamini-Lyons-Peres-Schramm \cite{BLPS} for spanning trees  and conjectured that tail triviality holds in full generality \cite[Conjecture 3.2]{Lyons-ICM}.
The argument of Benjamini-Lyons-Peres-Schramm \cite{BLPS} and of Lyons \cite{DPP-L} relies on an estimate for the decay of the variance of the conditional kernel;
using the local property of Lemma \ref{main-local}, we establish a similar  variance estimate in full generality, see Lemma  \ref{lem-var}, and obtain the desired triviality of the tail sigma-algebra. The local property of conditional kernels thus allows us  to carry out the proof  of tail triviality in a unified way for both the continuous and the discrete   setting.

 The triviality of the tail sigma-algebra for general determinantal point processes with self-adjoint kernels is  the main result of the independent  work by Osada and Osada \cite{Osada-Osada}. The argument of Osada-Osada \cite{Osada-Osada} is completely different from ours: Osada-Osada \cite{Osada-Osada}
construct a special family of discrete approximations of  continuous determinantal point processes  and derive the triviality of the tail sigma-algebra in the continuous setting from the theorem of Lyons by approximation.   Another approach, due to Lyons [private  communication], for establishing the triviality of the tail sigma-algebra in the
continuous setting, also proves it from  the  discrete result using the Goldman's transference principle, cf.  Goldman \cite[Proposition 12]{Goldman} and Lyons \cite[Section 3.6]{Lyons-ICM}.

 \subsection{Formulation of the main results}
Let $E$ be a locally compact  $\sigma$-compact Polish space, equipped with a metric such that any bounded set is relatively compact, and  endowed with a positive $\sigma$-finite Radon measure $\mu$. Let  $\Conf(E)$ be the space of  locally finite configurations on $E$. A point process on $E$ is by definition a Borel probability measure on $\Conf(E)$. Let  $K$ be   a bounded self-adjoint {\it locally trace class} operator $K: L^2(E, \mu)\rightarrow L^2(E, \mu)$ such that the spectrum $\spec(K) \subset [0, 1]$.  A theorem obtained by Macchi \cite{DPP-M} and  Soshnikov \cite{DPP-S}, as well as by Shirai and Takahashi  \cite{ShirTaka0}, gives a unique point process on $E$, denoted by  $\PP_{K}$, such that  for any compactly supported bounded measurable function $g: E \rightarrow \C$, we have
\begin{align*}
\E_{\PP_K} \Big[\prod_{x\in X} (1 + g(x)) \Big]   = \det  \Big( 1 +   \sgn(g)   |g |^{1/2}  \cdot K \cdot |g|^{1/2}\Big)_{L^2(\mu)}, \quad \sgn(g) =\frac{g}{|g|}.
\end{align*}
Here $\det (1 + S)$ denotes the Fredholm determinant of the operator $1+S$, see, e.g., Simon \cite{Simon-det}.

The locally trace class  self-adjoint operator  $K$ is an integral operator. Following Soshnikov \cite{DPP-S}, we  fix  a Borel subset  $E_0\subset E$ with $\mu(E \setminus E_0) =0$  and {\it fix} a Borel function $K: E_0 \times E_0 \rightarrow \C$, our kernel, in such a way that for any $k\in\N$ and any bounded Borel subset $B\subset E$, we have
\begin{align}\label{kernel-ass}
\tr ((\chi_B K \chi_B)^k) = \int_{B^k} K(x_1, x_2) K(x_2, x_3)\cdots K(x_k, x_1) \dd \mu(x_1) \cdots \dd \mu(x_k).
\end{align}

\begin{theorem}\label{mainthm-DT}
Let $K$ be a locally trace-class orthogonal projection onto a subspace  $H$ of $L^2(E,\mu)$.
Then for $\PP_K$-almost every $X \in \Conf(E)$, the functions $K_x$ defined by (\ref{kxt}) satisfy
\[
\overline{\spann}^{L^2(E, \mu)}  \{K_x; x\in X \}=H.
\]

\end{theorem}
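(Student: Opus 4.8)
The plan is to deduce Theorem \ref{mainthm-DT} from the local property of conditional kernels (Lemma \ref{main-local}) together with the fact that conditioning preserves the determinantal property (Lemma \ref{key-lem}). Since $H$ is a reproducing kernel Hilbert space, for any $\varphi \in L^2(E,\mu)$ we have $\varphi \perp K_x$ for all $x \in X$ if and only if $(K\varphi)(x) = 0$ for all $x \in X$; thus it suffices to show that the only $\varphi \in H$ vanishing $\PP_K$-almost surely on the random configuration is $\varphi = 0$. Equivalently, writing $L = K\varphi \in H$, I want to show that if $L \in H$ is not identically zero then, with positive probability, some particle $x \in X$ satisfies $L(x) \neq 0$. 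Suppose for contradiction that $L$ vanishes on $X$ almost surely; the strategy is to localize near a point where $L$ is "large" and exhibit, with positive probability, a particle there.

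First I would set up the one-dimensional compression. Fix $\varphi \in H$ with $\varphi \neq 0$ and let $Q$ be the orthogonal projection onto the line $\C\varphi$; more precisely, to make sense of the conditional kernel on a subset $B$ one works with the projection onto $\C(\chi_{E\setminus B}\varphi)$ or a nearby line inside $L^2(E\setminus B,\mu)$. By Lemma \ref{main-local}, applied to this rank-one $Q$, the scalar quantity $\langle K^{[X,\,B]}\varphi,\varphi\rangle$ equals $\langle (Q+\chi_B)K(Q+\chi_B)\varphi,\varphi\rangle$ compressed appropriately, and as $B$ increases to $E$ this is a bounded martingale indexed by $B$ (cf. the martingale property used to prove Lemma \ref{key-lem}). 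Its almost-sure limit, by the convergence of the operator-valued martingale in the locally trace class norm, is the corresponding quantity for the limiting conditional kernel, which governs the conditional process given the configuration on all of $E$ — but that conditional process is almost surely the point mass at $X$ itself. The key point is to extract from this martingale convergence a quantitative statement: if $L = K\varphi$ vanishes on $X$ a.s., then the conditional first-intensity contributions near the support of $L$ must degenerate, forcing $\langle K\varphi,\varphi\rangle = \|\varphi\|^2$ to drop — a contradiction with $\varphi \in H = \Ran K$, which gives $\langle K\varphi,\varphi\rangle = \|\varphi\|^2$.

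Concretely, the mechanism I expect to use is the following: for a precompact Borel set $B$ and the conditional kernel $K^{[X,\,B]}$ on $E \setminus B$, the expected number of particles of $X$ lying in a small set $A \subset E \setminus B$ that is "seen" by $\varphi$ is controlled below by $\langle \chi_A K^{[X,\,B]}\chi_A \,\widehat\varphi, \widehat\varphi\rangle$ for suitable test functions, and the local property lets me replace $\chi_A$ by a rank-one projection aligned with $L$. If $L(x) \neq 0$ on a set of positive $\mu$-measure $A$, then one shows that with positive $\PP_K$-probability the configuration restricted to $A$ is nonempty \emph{and} compatible with $L(x) \neq 0$ there, by running the martingale from $B$ disjoint from $A$ up to $B \supset E \setminus A$ and observing the martingale cannot converge to a degenerate conditional kernel on $A$ unless $\chi_A L = 0$ in $L^2$. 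Iterating over a countable cover of $\{L \neq 0\}$ by such sets and using that $L$ is not a.e. zero yields that $L$ does not vanish on $X$ with positive probability — contradicting the assumption. Hence $\varphi \perp K_x$ for all $x \in X$ a.s. forces $K\varphi = 0$, i.e. $\varphi \perp H$, so the closed span of $\{K_x : x \in X\}$ is all of $H$.

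The main obstacle, I expect, is the passage from the \emph{qualitative} almost-sure martingale convergence (which Lemma \ref{main-local} and the Radon-Nikodym argument deliver for the operators $K^{[X,\,B]}$) to the \emph{quantitative} non-degeneracy needed to produce a particle where $L \neq 0$: one must argue that the limiting conditional kernel, which corresponds to conditioning on the entire configuration and is therefore a projection whose range is exactly $\overline{\spann}\{K_x : x \in X\}$, cannot omit $\varphi$ unless $K\varphi$ already vanishes on $X$. This is really a statement about how the rank-one compressions $QK^{[X,\,B]}Q$ behave in the limit, and making it rigorous requires carefully handling the interplay between the (random) subspace $\overline{\spann}\{K_x : x \in X\}$, the deterministic subspace $H$, and the identity $QK^{[X,\,B]}Q = ((Q+\chi_B)K(Q+\chi_B))^{[X,\,B]}$ as $B \uparrow E$ along a fixed exhaustion. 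The discrete case (Lyons, \cite[Theorem 7.11]{DPP-L}) suggests the right bookkeeping, but the continuous limit is exactly where the local property of Lemma \ref{main-local} is indispensable.
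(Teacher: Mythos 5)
There is a genuine gap, on two levels. First, your reduction is too weak: you propose to show that for each \emph{fixed} nonzero $\varphi\in H$ the function $K\varphi$ fails to vanish on $X$ with positive probability (or even almost surely). But the theorem requires a \emph{single} null set of configurations outside of which \emph{no} nonzero $h\in H$ vanishes on $X$; since $H$ is uncountable, you cannot take a union of exceptional null sets over $h$. The paper resolves this by making the countability live elsewhere: it fixes a countable collection of balls $S_n$ (rational radii, centres in a countable dense set) and a single full-measure set $\mathcal{A}$ on which the conditional kernels $K^{[X,\,S_n^c]}$ and the inequalities $\PP_K(\#_{S_n}=\#(X\cap S_n)\mid X,S_n^c)>0$ hold simultaneously for all $n$; then for \emph{any} $X\in\mathcal{A}$ and \emph{any} nonzero $h$ vanishing on $X$ one picks a ball $S_n$ with $h\restriction_{S_n}\neq 0$ and $X\cap S_n=\emptyset$ (this choice need not be measurable) and derives a contradiction. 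You acknowledge the difficulty in your last paragraph but your proposed fix (a countable cover of $\{L\neq 0\}$ for a fixed $L$) does not address the uncountability of $H$.

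Second, your core mechanism points in the wrong direction. You run the martingale over bounded sets $B\uparrow E$ and invoke ``the limiting conditional kernel, which \dots is a projection whose range is exactly $\overline{\spann}\{K_x:x\in X\}$''; no such object appears, and the limit of $\chi_{E\setminus W}K^{[X,\,B_n]}\chi_{E\setminus W}$ as $W=E$ is trivial. The paper instead conditions on the configuration in the \emph{complement} $S_n^c$ of a small ball (an unbounded set, which is exactly why Lemma \ref{key-lem} is needed) and looks at the resulting kernel $K^{[X,\,S_n^c]}$ acting on $L^2(S_n,\mu)$. The two ingredients you are missing are: (i) Lemma \ref{lem-id}, which shows that if $h\in H$ vanishes on $X\cap S_n^c$ then $\chi_{S_n}h$ is a fixed vector of $K^{[X,\,S_n^c]}$, so that $1$ is an eigenvalue and the gap probability $\det(1-K^{[X,\,S_n^c]})=\PP_K(\#_{S_n}=0\mid X,S_n^c)$ vanishes; and (ii) the elementary Lemma \ref{lem-good-rel}, asserting that for any point process $\PP(\#_B=\#(X\cap B)\mid X,B^c)>0$ almost surely, which (since $X\cap S_n=\emptyset$) forces that same gap probability to be positive. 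The contradiction between (i) and (ii) is the actual proof; your ``first-intensity degeneration forcing $\langle K\varphi,\varphi\rangle$ to drop'' does not supply a substitute for either step.
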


If we fix a realization for each $h\in H$ in such a way that  the equation
$h(x) = \langle h, K_x\rangle$ holds  for every $x\in E_0$ and every $h\in H$, then Theorem \ref{mainthm-DT} can equivalently be reformulated as follows:
\begin{corollary}\label{zeroset}
For $\PP_K$-almost every $X \in \Conf(E)$,  if $h\in H$ satisfies $h\restriction_{X} =0$, then $h =0$.
\end{corollary}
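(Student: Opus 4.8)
The plan is to deduce the corollary directly from Theorem \ref{mainthm-DT} by a clean duality argument. Fix once and for all, as in the statement preceding the corollary, a realization of every $h \in H$ for which $h(x) = \langle h, K_x \rangle$ holds for all $x \in E_0$ and all $h \in H$. This is possible precisely because $K$ is the reproducing kernel of $H$, and $K_x \in H$ for every $x \in E_0$. Now take the $\PP_K$-conull set of configurations $X$ for which Theorem \ref{mainthm-DT} holds, i.e. $\overline{\spann}^{L^2(E,\mu)}\{K_x : x \in X\} = H$; we may also intersect with the conull set $\{X : X \subset E_0\}$, using $\mu(E \setminus E_0) = 0$ and the fact that $\PP_K$ has no atoms on the deleted null set (its correlation functions are absolutely continuous with respect to $\mu$). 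Work with an arbitrary $X$ in this intersection.

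Suppose $h \in H$ satisfies $h\restriction_X = 0$, that is, $h(x) = 0$ for every $x \in X$. Since $X \subset E_0$, the reproducing property gives $\langle h, K_x \rangle = h(x) = 0$ for every $x \in X$. Thus $h$ is orthogonal, in $L^2(E,\mu)$, to every vector of the family $\{K_x : x \in X\}$, hence to its closed linear span. By Theorem \ref{mainthm-DT} this closed linear span is all of $H$, so $h \perp H$. But $h \in H$, whence $\langle h, h \rangle = 0$ and therefore $h = 0$. This establishes the corollary on a $\PP_K$-conull set of configurations.

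The only genuinely delicate point — and it is a point of bookkeeping rather than of real difficulty — is the choice of a \emph{single} measurable realization of the elements of $H$ that simultaneously validates $h(x) = \langle h, K_x \rangle$ for \emph{all} $x \in E_0$ and \emph{all} $h \in H$, so that the pointwise hypothesis $h\restriction_X = 0$ can be converted into the Hilbert-space statement $h \perp K_x$. This is exactly the normalization fixed in the sentence introducing the corollary, and it is consistent because one may first fix the kernel $K$ on $E_0 \times E_0$ as in (\ref{kernel-ass}), then define $h(x) := \langle h, K_x\rangle$ for $x \in E_0$; square-integrability and membership in $H$ of this realization follow from $K$ being the orthogonal projection onto $H$. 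Once this is in place, the argument above is immediate, and conversely, specializing $h$ to run over an orthonormal basis of $H$ shows that Corollary \ref{zeroset} implies Theorem \ref{mainthm-DT}, so the two statements are indeed equivalent.
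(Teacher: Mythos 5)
Your argument is correct and is precisely the duality the paper has in mind: the corollary is stated there as an equivalent reformulation of Theorem \ref{mainthm-DT} via the reproducing identity $h(x)=\langle h, K_x\rangle$, and indeed the paper's own proof of the theorem in Section 6 establishes exactly the statement $L(X)=\{h\in H: h\restriction_X\equiv 0\}=\{0\}$, i.e.\ the corollary, with the span statement following by taking orthogonal complements in $H$. Your bookkeeping about restricting to configurations contained in $E_0$ (justified since the first correlation measure $K(x,x)\,\dd\mu(x)$ gives no mass to $E\setminus E_0$) is sound and matches the paper's conventions.
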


\begin{theorem}\label{main-thm3}
Let $B_1 \subset \cdots \subset B_n \subset \cdots \subset E$ be an increasing exhausting sequence of bounded Borel subsets of $E$. The $\sigma$-algebra
$
\bigcap_{n\in\N}\mathcal{F}(E\setminus B_n)
$
is trivial with respect to $\PP_K$.
\end{theorem}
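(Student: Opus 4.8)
The plan is to reduce tail triviality to an $L^2$-decorrelation estimate for the conditional kernels, exactly as in the discrete argument of Lyons \cite{DPP-L}, but using the local property of Lemma \ref{main-local} to produce the required variance decay in the continuous setting. First I would recall the standard criterion: the tail $\sigma$-algebra $\bigcap_n \mathcal{F}(E\setminus B_n)$ is trivial as soon as, for every bounded Borel $A\subset E$ and every $\varepsilon>0$, one can find $n$ and an $\mathcal{F}(E\setminus B_n)$-measurable (indeed, cylindrical) function $g$ with $\|\ch_A - \E[\ch_A\mid \mathcal{F}(E\setminus B_n)]\|_{L^2(\PP_K)}$ small — or more directly, that $\E\big[\mathrm{Var}(\ch_A\mid \mathcal{F}(E\setminus B_n))\big]\to 0$; a density/approximation argument then upgrades this from cylinder events to all of $\mathcal{F}(\bigcup_n B_n)^c{}^{\perp}$, hence to the full tail by a martingale-convergence/$0$--$1$ dichotomy. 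Since linear statistics and local particle counts generate the relevant $\sigma$-algebras, it suffices to control $\mathrm{Var}\big(\#_A \mid \mathcal{F}(E\setminus B_n)\big)$, or more conveniently the conditional covariance of two linear statistics supported in a fixed bounded set.

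The key step is the variance estimate itself, which I would isolate as Lemma \ref{lem-var}: conditionally on the configuration in $E\setminus B_n$, the process in $B_n$ is determinantal with kernel $K^{[X,\,E\setminus B_n]}$ (by Lemma \ref{key-lem}), and for $\varphi$ supported in a fixed bounded $A\subset B_{n_0}$ the conditional variance of the linear statistic $\sum_{x\in X}\varphi(x)$ is a quadratic functional of $Q K^{[X,\,E\setminus B_n]} Q$ where $Q$ is the projection onto $L^2(A,\mu)$. Now apply Lemma \ref{main-local} with $B = E\setminus B_n$: the compressed kernel $Q K^{[X,\,E\setminus B_n]} Q$ equals $\big((Q+\chi_{E\setminus B_n})K(Q+\chi_{E\setminus B_n})\big)^{[X,\,E\setminus B_n]}$, and feeding this into the formula of Definition \ref{def-canonical-kernel} gives an explicit expression whose expectation, as $n\to\infty$, is governed by $Q(1-K)Q$ compressed against the growing subspace — concretely, the matrix entries $\langle K^{[X,\,B]}\varphi,\varphi\rangle$ form a martingale indexed by $B$ (this is the content of (\ref{exp-kxb}): $\E\langle K^{[X,\,E\setminus B_n]}\varphi,\varphi\rangle$ is constant in $n$ while its fluctuation is exactly the conditional variance), so the operator-valued martingale convergence already invoked before Lemma \ref{key-lem} forces $\mathrm{Var}\big(\langle K^{[X,\,E\setminus B_n]}\varphi,\varphi\rangle\big)\to 0$. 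Unwinding, $\E\big[\mathrm{Var}(\sum_{x\in X}\varphi(x)\mid \mathcal{F}(E\setminus B_n))\big]\to 0$ for every fixed compactly supported $\varphi$.

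Given the variance estimate, I would finish as follows. Fix a tail event $T$; for each bounded Borel $A$, $\E[\mathrm{Var}(\#_A\mid \mathcal{F}(E\setminus B_n))]\to 0$ means $\#_A$ is, in the limit, measurable with respect to $\bigcap_n \mathcal{F}(E\setminus B_n)$; but simultaneously $\#_A$ is $\mathcal{F}(B_m)$-measurable for $A\subset B_m$, and the tail is independent of each $\mathcal{F}(B_m)$ up to the correction controlled by the same variance bound. Passing to the limit, $\ch_T$ is orthogonal to every cylinder function based on finitely many $B_m$'s, hence to a dense subspace of $L^2(\PP_K)$, so $\ch_T$ is constant, i.e. $\PP_K(T)\in\{0,1\}$. (Equivalently: $\E[\ch_T\mid \mathcal{F}(B_n)]\to \ch_T$ a.s. by the martingale convergence theorem, while the variance estimate shows $\E[\ch_T\mid \mathcal{F}(B_n)]$ is asymptotically deterministic, forcing $\ch_T$ deterministic.)

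The main obstacle is the variance estimate, Lemma \ref{lem-var}: in the discrete case Benjamini--Lyons--Peres--Schramm and Lyons get the decay from the one-step martingale relation plus an explicit bound on how much a single conditioning can move a diagonal entry, and it is not obvious a priori that the continuous conditional kernels $K^{[X,\,B]}$ — defined through Palm measures and the induced-process construction — enjoy the same quadratic control. Lemma \ref{main-local} is precisely what removes this obstruction: it shows that compressing $K^{[X,\,B]}$ by an arbitrary projection $Q$ supported off $B$ is compatible with the conditioning, which is exactly the structural input that makes $\langle K^{[X,\,B]}\varphi,\varphi\rangle$ a genuine (not merely one-step) martingale and hence lets the Radon--Nikodym / operator-martingale machinery deliver the vanishing of the conditional variance. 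Everything else is a routine density argument once that estimate is in hand.
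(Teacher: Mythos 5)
Your overall architecture coincides with the paper's: reduce tail triviality to the decay of $\Var_{\PP_K}\bigl[\langle K^{[X,\,E\setminus B_n]}\phi,\phi\rangle\bigr]$ for $\phi$ supported in a fixed bounded set, then conclude by approximating a tail event by a cylinder event in $\mathcal{F}(D_\ell)$ and passing to the limit. That final approximation step, as you sketch it, is essentially the paper's derivation of Theorem \ref{main-thm3} from Proposition \ref{prop-tail-re-mart} and is fine.

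The gap is in how you obtain the variance decay. You assert that since $\langle K^{[X,\,B]}\varphi,\varphi\rangle$ is a martingale in $B$ with constant expectation, ``the operator-valued martingale convergence \ldots forces $\Var\bigl(\langle K^{[X,\,E\setminus B_n]}\varphi,\varphi\rangle\bigr)\to 0$.'' This does not follow. The sets $E\setminus B_n$ decrease, so the relevant object is a \emph{reverse} martingale adapted to $\mathcal{F}(E\setminus B_n)$; reverse martingale convergence only gives convergence to $\E_{\PP_K}[\,\cdot\,|\,\bigcap_n\mathcal{F}(E\setminus B_n)]$, and the variance of the terms tends to the variance of that limit, which vanishes if and only if the limit is deterministic --- which is precisely the tail-triviality-type statement you are trying to prove. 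The argument is circular at its crucial point. The paper instead proves the quantitative bound of Lemma \ref{lem-var}, namely $\Var_{\PP_K}\bigl[\langle K^{[X,\,B]}\phi,\phi\rangle\bigr]\le\|\phi\|_2^2\,\|\chi_B K\phi\|_2^2$, by a direct second-moment computation: it uses $\E_{\PP_K}K^{[X,\,B]}=\chi_{B^c}K\chi_{B^c}$ (Proposition \ref{prop-1-step}, itself a consequence of the local property) together with the fact that $K^{[X,\,B]}$ is an orthogonal projection when $K$ is, so that $\|K^{[X,\,B]}\phi\|_2^2=\langle K^{[X,\,B]}\phi,\phi\rangle$ and the second moment telescopes; the general self-adjoint case is then reduced to the projection case by dilating $K$ to a locally trace class orthogonal projection on $L^2(E\sqcup\N)$ (Proposition \ref{claim-dilation}), a step absent from your plan. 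The decay then comes from the elementary fact that $\|\chi_{E\setminus B_n}K\phi\|_2\to0$ for fixed $\phi$, not from any martingale theorem. You would also need to address that $E\setminus B_n$ is unbounded, so $K^{[X,\,E\setminus B_n]}$ is only defined as a limit over bounded exhaustions and the variance bound must be transported through that limit, as the paper does in Proposition \ref{prop-tail-re-mart}.
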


\begin{corollary}\label{cor-Lyons-conj}
The point process $\PP_K$  has trivial tail $\sigma$-algebra.
\end{corollary}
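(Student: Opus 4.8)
The plan is to obtain Corollary \ref{cor-Lyons-conj} as an immediate formal consequence of Theorem \ref{main-thm3}; all the genuine work has already been carried out there, and what remains is a routine reduction. Recall that the tail $\sigma$-algebra of the point process $\PP_K$ is, by definition,
\[
\mathcal{T} \;:=\; \bigcap_{B} \mathcal{F}(E\setminus B),
\]
where the intersection is taken over all bounded Borel subsets $B\subset E$ (equivalently, over all relatively compact Borel subsets, since by our standing assumption on the metric bounded sets are relatively compact). One then has to show that every event in $\mathcal{T}$ has $\PP_K$-probability $0$ or $1$.

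First I would fix, once and for all, a point $o\in E$ and let $B_n$ be the closed ball of radius $n$ about $o$. By the assumption on the metric, each $B_n$ is relatively compact, hence a bounded Borel set, and clearly $B_1\subset B_2\subset\cdots$ with $\bigcup_{n\in\N}B_n=E$; thus $(B_n)_{n\in\N}$ is an increasing exhausting sequence of bounded Borel subsets of $E$. Since each $B_n$ is in particular bounded, it occurs among the sets over which the intersection defining $\mathcal{T}$ is taken, so $\mathcal{T}\subseteq \mathcal{F}(E\setminus B_n)$ for every $n$, and therefore
\[
\mathcal{T}\;\subseteq\;\bigcap_{n\in\N}\mathcal{F}(E\setminus B_n).
\]
If one prefers, the reverse inclusion also holds, because any bounded Borel $B$ satisfies $B\subseteq B_n$ for all large $n$, so that the sequence $(B_n)$ is cofinal; but only the displayed inclusion is needed below.

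Finally, Theorem \ref{main-thm3}, applied to the sequence $(B_n)_{n\in\N}$, asserts that $\bigcap_{n\in\N}\mathcal{F}(E\setminus B_n)$ is $\PP_K$-trivial. A sub-$\sigma$-algebra of a $\PP_K$-trivial $\sigma$-algebra is again $\PP_K$-trivial, so $\mathcal{T}$ is $\PP_K$-trivial, which is exactly the assertion of the corollary. I do not expect any real obstacle here: the only points to verify are the elementary facts that the balls $B_n$ are bounded Borel sets and that they exhaust $E$, both immediate from the hypotheses on $(E,\mu)$, so that the entire mathematical content of the corollary resides in Theorem \ref{main-thm3}.
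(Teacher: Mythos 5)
Your argument is correct and is exactly the intended deduction: the paper states the corollary without proof as an immediate consequence of Theorem \ref{main-thm3}, the point being precisely that the tail $\sigma$-algebra is contained in $\bigcap_{n}\mathcal{F}(E\setminus B_n)$ for any exhausting sequence of bounded Borel sets, and a sub-$\sigma$-algebra of a $\PP_K$-trivial $\sigma$-algebra is trivial. Nothing is missing.
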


\begin{remark}
Our assumption on $\sigma$-compactness of $E$ is not essential: in the argument below, one could  everywhere  replace ``relatively compact'' (here equivalent to ``bounded") by ``having finite weight with respect to the measure  $K(x, x)d\mu(x)$''. On the other hand, the assumption of {\it self-adjointness} is used throughout. It would be interesting to 
obtain similar results for more general determinantal kernels.
\end{remark}
\subsubsection{The key lemma}

\begin{definition}\label{def-canonical-kernel}
For any bounded Borel subset $B\subset E$, we define canonical conditional kernels $K^{[X, \, B]}$ with respect to the conditioning on the configuration in $B$ as follows:
\begin{itemize}
\item For   $p\in E_0$, define a kernel $K^p$, for  $(x, y)\in E_0 \times E_0$, by the formula
\begin{align*}
K^{p}(x, y): = \left\{  \begin{array}{cc}  K(x, y) - \displaystyle \frac{K(x, p) K(p, y)}{K(p, p)} &  \text{if $K(p, p)> 0$}
\\
 0 &  \text{if $K(p, p)=0$}
 \end{array}.
 \right.
\end{align*}
\item For an $n$-tuple $(p_1, \cdots, p_n) \in E_0^n$,  define a kernel  $K^{p_1, \cdots, p_n} = (\cdots (K^{p_1})^{p_2} \cdots)^{p_n}$ as follows (cf. Shirai-Takahashi \cite[Corollary 6.6]{ST-palm}).
Given $x, y\in E_0$, write $p_0= x, q_0 = y$, $q_i = p_i$ for $1\le i \le n$, and set
 \begin{align}\label{def-Palm-kernel}
 K^{p_1, \cdots, p_n}(x, y) : =
 \left\{  \begin{array}{cl}
 \displaystyle \frac{\Det[K(p_i, q_j)]_{0 \le i, j \le n}
}{
 \Det[K(p_i, p_j)]_{1\le i, j \le n}
} &  \text{if $\Det[K(p_i, p_j)]_{1\le i, j \le n}>0$}
\\
 0 &   \text{if $\Det[K(p_i, p_j)]_{1\le i, j \le n} = 0$}
 \end{array}
 \right.
\end{align}
\item For a bounded Borel subset $B\subset E$ and $X \in \Conf(E)$  such that $X\cap B=\{p_1, \dots, p_l\} \subset E_0$,  define
\begin{align}\label{def-cond-kernel}
K^{[X,\, B]}=\left\{
 \begin{array}{cl}
\chi_{E\setminus B} K^{p_1, \dots, p_l}(1-\chi_B K^{p_1, \dots, p_l})^{-1}\chi_{E\setminus B} &  \text{if $1-\chi_B K^{p_1, \dots, p_l}$ is invertible}
\\
0 & \text{if $1-\chi_B K^{p_1, \dots, p_l}$ is not invertible}
 \end{array}.
 \right.
\end{align}
\end{itemize}
\end{definition}

We will see later, from the inequalities \eqref{both-side-c} and \eqref{one-side-c},  that if $1-\chi_B K^{p_1, \dots, p_l}$ is invertible, then the operator $\chi_B K^{p_1, \dots, p_l}$ is strictly contractive. Therefore, the series
\[
K^{[X,\, B]} = \chi_{E\setminus B}  \sum_{n=0}^\infty K^{p_1, \dots, p_l}(\chi_B K^{p_1, \dots, p_l})^{n}\chi_{E\setminus B}
\]
converges in the operator norm topology. In particular, for $(x, y) \in E_0 \times E_0$, we will use the formula
\begin{align}\label{canonical-KXB}
\begin{split}
K^{[X,\, B]}(x, y) =& \chi_{E\setminus B} (x) \chi_{E\setminus B} (y)  K^{p_1, \dots, p_l}(x, y)
\\
& + \chi_{E\setminus B} (x) \chi_{E\setminus B} (y)  \Big \langle     \Big( \sum_{n=1}^\infty  (\chi_B  K^{p_1, \dots, p_l})^{n-1}  \Big)  (\chi_B(\cdot) K^{p_1, \dots, p_l}(\cdot, y)),\, K^{p_1, \dots, p_l}(\cdot, x)  \Big \rangle_{L^2(E, \mu)}
\end{split}
\end{align}
as our specific Borel realization of the kernel  for the operator $K^{[X, \, B]}$.

\begin{remark*}
We will see in Proposition \ref{prop-DPP} below that $K^{[X, \, B]}$ is the correlation kernel for the conditional measure of $\PP_K$, the condition being that the configuration  on $B$ coincides with $X\cap B$. In particular, for $\PP_K$-almost every $X$, we have $X \cap B = \{p_1, \cdots, p_l\} \subset E_0$ and $1-\chi_B K^{p_1, \dots, p_l}$ is invertible. The second case $K^{[X, B]} =0$ has probability zero.  Note that the range of $K^{[X, \, B]}$ is contained in $L^2(E \setminus B, \mu)$ and we have
\[
K^{[X, \, B]} = \chi_{E\setminus B} K^{[X, \, B]} \chi_{E\setminus B}.
\]
\end{remark*}

 For any Borel subset $W\subset E$, {\it not necessarily bounded},  consider the Borel surjection $\pi_W: \Conf(E) \rightarrow \Conf(W)$ given by $X \mapsto X \cap W$. Fibres of this mapping can be identified with $\Conf(E\setminus W)$.  For  a  Borel probability measure $\PP$ on $\Conf(E)$,  the measure $\mathbb{P}(\cdot | X; W)$ on $\Conf(E\setminus W)$ is defined as the conditional measure of $\mathbb{P}$ with respect to the condition that the restriction of our random configuration onto $W$ coincides with $\pi_W(X)$. More formally,  the measures $\mathbb{P}(\cdot | X; W)$ are conditional measures, in the sense of Rohlin \cite{Rohlin-meas-eng},  of our initial measure $\PP$ on fibres of the measurable partition induced by the surjection $\pi_W$.

Denote by $\mathscr{L}_1(L^2(E, \mu))$ the space of trace class operators on $L^2(E, \mu)$ and by  $\mathscr{L}_{1, loc}(L^2(E, \mu))$ the space of bounded and locally trace class operators on $L^2(E, \mu)$.
The space $\mathscr{L}_{1, loc}(L^2(E, \mu))$ is equipped with the topology induced by the  semi-norms $ T\mapsto \| \chi_B T\chi_B\|_1$, where $\| \cdot \|_1$ is the trace class norm, $B$ ranges over bounded Borel subsets of $E$.

For any Borel subset $W \subset E$, we denote by $\mathcal{F}(W): = \sigma (\#_A: A\subset W)$
 the $\sigma$-algebra on $\Conf(E)$ generated by the mappings $\#_A: \Conf(E) \rightarrow\R$ defined by $\#_A(X) := \#(X \cap A)$, where $A$ ranges over all bounded Borel subsets of $W$.
We are now ready to formulate our key lemma.

\begin{lemma}\label{key-lem}
Let $W\subset E$ be a Borel subset, let $B_1 \subset \cdots \subset B_n \subset \cdots \subset W$ be an increasing exhausting sequence of bounded Borel subsets of $W$. For $\PP_K$-almost every $X\in \Conf(E)$  there exists a positive self-adjoint  contraction $K^{[X, \, W]} \in \mathscr{L}_{1, loc}(L^2(E\setminus W, \mu))$ such that
\[
\chi_{E\setminus W} K^{[X, \, B_n]} \chi_{E\setminus W} \xrightarrow[\text{in $\mathscr{L}_{1, loc}(L^2(E\setminus W, \mu))$}]{n\to\infty}  K^{[X, \, W]}
\]
and
\[
\PP_K(\cdot | X, W) = \PP_{K^{[X, \, W]}}.
\]
\end{lemma}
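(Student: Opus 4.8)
The plan is, for each compactly supported $\varphi\in L^{2}(E\setminus W,\mu)$, to realize $X\mapsto\langle K^{[X,B_{n}]}\varphi,\varphi\rangle$ as a bounded martingale for the filtration $(\mathcal F(B_{n}))_{n}$ under $\PP_{K}$, then to promote this to an $\mathscr{L}_{1}$-valued martingale via the Radon--Nikodym property and extract a limit operator $K^{[X,W]}$, and finally to identify $\PP_{K^{[X,W]}}$ with $\PP_{K}(\cdot\,|\,X;W)$ by L\'evy's upward theorem. Normalize $\|\varphi\|=1$, let $Q_{\varphi}$ be the rank-one orthogonal projection onto $\mathbb C\varphi$, and for bounded Borel $B\subset W$ put $K_{B}:=(Q_{\varphi}+\chi_{B})K(Q_{\varphi}+\chi_{B})$. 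Since $\supp\varphi\subset E\setminus W$ is disjoint from $B$, the operator $Q_{\varphi}+\chi_{B}$ is the orthogonal projection onto $\mathbb C\varphi\oplus L^{2}(B,\mu)$, so $K_{B}$ is a self-adjoint positive contraction, globally trace class, with range inside $\mathbb C\varphi\oplus L^{2}(B,\mu)$; in particular $\PP_{K_{B}}$ is supported on $\Conf(\supp\varphi\cup B)$. Applying the local property \eqref{local-kernel} with $Q=Q_{\varphi}$ gives, for each such $B$,
\begin{equation*}
(K_{B})^{[X,B]}=Q_{\varphi}K^{[X,B]}Q_{\varphi}=\langle K^{[X,B]}\varphi,\varphi\rangle\,Q_{\varphi},\qquad\text{hence}\qquad\langle K^{[X,B]}\varphi,\varphi\rangle=\Tr\big((K_{B})^{[X,B]}\big),
\end{equation*}
which by Proposition~\ref{prop-DPP} lies in $[0,1]$ for $\PP_{K}$-a.e.\ $X$.

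For the martingale identity, fix $n\le m$. One checks directly that $\chi_{\supp\varphi\cup B_{n}}K_{B_{m}}\chi_{\supp\varphi\cup B_{n}}=K_{B_{n}}$, so $\PP_{K_{B_{n}}}$ is the restriction of $\PP_{K_{B_{m}}}$ to $\supp\varphi\cup B_{n}$; since conditioning on the configuration in $B_{n}$ commutes with restriction onto the over-set $\supp\varphi\cup B_{n}\supseteq B_{n}$ (as recalled in the introduction), this yields $\chi_{\supp\varphi}(K_{B_{m}})^{[X,B_{n}]}\chi_{\supp\varphi}=(K_{B_{n}})^{[X,B_{n}]}$. On the other hand $\chi_{B_{m}}K_{B_{m}}\chi_{B_{m}}=\chi_{B_{m}}K\chi_{B_{m}}$, so $X\mapsto X\cap B_{m}$ has the same law under $\PP_{K}$ and under $\PP_{K_{B_{m}}}$; hence, combining the displayed identity above (for $B=B_{m}$), Proposition~\ref{prop-DPP} applied to $K_{B_{m}}$ with the sets $B_{m}$ and then $B_{n}$, and the tower property,
\begin{equation*}
\E_{\PP_{K}}\!\big[\langle K^{[\cdot,B_{m}]}\varphi,\varphi\rangle\mid\mathcal F(B_{n})\big](X)=\E_{\PP_{K_{B_{m}}}}\!\big[\#(Y\cap\supp\varphi)\mid\mathcal F(B_{n})\big](X)=\Tr\big(\chi_{\supp\varphi}(K_{B_{m}})^{[X,B_{n}]}\chi_{\supp\varphi}\big),
\end{equation*}
which equals $\Tr\big((K_{B_{n}})^{[X,B_{n}]}\big)=\langle K^{[X,B_{n}]}\varphi,\varphi\rangle$. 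Thus $X\mapsto\langle K^{[X,B_{n}]}\varphi,\varphi\rangle$ is a $[0,1]$-valued $(\mathcal F(B_{n}))_{n}$-martingale, and by polarization so is $X\mapsto\langle K^{[X,B_{n}]}\varphi,\psi\rangle$ for all compactly supported $\varphi,\psi\in L^{2}(E\setminus W,\mu)$.

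Next, fix a bounded Borel $A\subset E\setminus W$; since rank-one operators span a dense subspace of the compact operators (the predual of $\mathscr{L}_{1}$), the martingale of the previous step together with the uniform bound $\|K^{[X,B_{n}]}\|\le1$ shows that $\Xi_{n}(X):=\chi_{A}K^{[X,B_{n}]}\chi_{A}$ is an $\mathscr{L}_{1}(L^{2}(E\setminus W,\mu))$-valued martingale, $L^{1}$-bounded since $\E_{\PP_{K}}\|\Xi_{n}\|_{1}=\E_{\PP_{K}}\Tr(\chi_{A}K^{[X,B_{n}]}\chi_{A})=\E_{\PP_{K}}[\#_{A}]=\Tr(\chi_{A}K\chi_{A})<\infty$ (Proposition~\ref{prop-DPP} and the tower property). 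As $\mathscr{L}_{1}$ of a separable Hilbert space is a separable dual, it has the Radon--Nikodym property, so $\Xi_{n}$ converges $\PP_{K}$-a.s.\ in $\|\cdot\|_{1}$; running this over a countable exhausting family of sets $A$ and intersecting with the $\PP_{K}$-a.s.\ event on which every $K^{[X,B_{n}]}$ is a positive self-adjoint contraction, we obtain for $\PP_{K}$-a.e.\ $X$ a positive self-adjoint contraction $K^{[X,W]}\in\mathscr{L}_{1,loc}(L^{2}(E\setminus W,\mu))$ with $\chi_{E\setminus W}K^{[X,B_{n}]}\chi_{E\setminus W}\to K^{[X,W]}$ in $\mathscr{L}_{1,loc}$. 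To identify the limit, note that since $B_{n}\uparrow W$ we have $\mathcal F(W)=\sigma(\bigcup_{n}\mathcal F(B_{n}))$, so L\'evy's upward theorem gives $\PP_{K}(\cdot\,|\,X;B_{n})\to\PP_{K}(\cdot\,|\,X;W)$ weakly, $\PP_{K}$-a.s. Extending $K^{[X,W]}$ by zero on $L^{2}(W,\mu)$, one has in fact $K^{[X,B_{n}]}\to K^{[X,W]}$ in $\mathscr{L}_{1,loc}(L^{2}(E,\mu))$: on $E\setminus W$ this is the convergence just obtained; over $A\cap W$ (bounded $A$) the relevant trace is $\E_{\PP_{K}}[\#_{A\cap(W\setminus B_{n})}\mid\mathcal F(B_{n})](X)$, which tends to $0$ a.s.\ by another application of L\'evy's theorem, and the off-diagonal parts are controlled by the Cauchy--Schwarz inequality for the trace norm. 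By continuity of $L\mapsto\PP_{L}$ on $\mathscr{L}_{1,loc}(L^{2}(E,\mu))$, $\PP_{K}(\cdot\,|\,X;B_{n})=\PP_{K^{[X,B_{n}]}}$ converges weakly to $\PP_{K^{[X,W]}}$; comparing with the L\'evy limit yields $\PP_{K}(\cdot\,|\,X;W)=\PP_{K^{[X,W]}}$, as required.

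The step I expect to be the main obstacle is the scalar martingale property. The quantity $\langle K^{[X,B]}\varphi,\varphi\rangle$ is \emph{not} a linear statistic of the conditional process $\PP_{K}(\cdot\,|\,X;B)$ itself, so its martingale property does not come from a bare tower identity; the local property of Lemma~\ref{main-local} is exactly what lets one rewrite $\langle K^{[X,B]}\varphi,\varphi\rangle$ as the conditional expected particle count of the auxiliary determinantal process $\PP_{(Q_{\varphi}+\chi_{B})K(Q_{\varphi}+\chi_{B})}$, which lives on the small subspace $\mathbb C\varphi\oplus L^{2}(B,\mu)$, and then to track that count as $B$ grows. A secondary, more routine difficulty is the measure-theoretic bookkeeping for the vector-valued martingale --- Borel measurability of $X\mapsto\chi_{A}K^{[X,B_{n}]}\chi_{A}$ into $\mathscr{L}_{1}$, and the reduction of the $\mathscr{L}_{1}$-martingale property to the scalar one --- together with the simultaneous almost-sure selection over the countable families of test functions $\varphi$ and sets $A$.
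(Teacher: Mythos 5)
Your proof is correct, and its overall architecture coincides with the paper's: use the first local property to rewrite $\langle K^{[X,B]}\varphi,\varphi\rangle$ as a conditional particle count for the auxiliary finite-rank process governed by $(Q_\varphi+\chi_B)K(Q_\varphi+\chi_B)$, obtain a bounded scalar (hence, by Pettis, operator-valued) martingale, invoke the Radon--Nikodym property of $\mathscr{L}_1$ for almost sure convergence, and identify the limit through weak convergence of conditional measures. Where you genuinely diverge is in the derivation of the martingale property itself. The paper first proves the one-step identity $\E_{\PP_K}\big[K^{[X,B]}\big]=\chi_{E\setminus B}K\chi_{E\setminus B}$ (Proposition \ref{prop-1-step}) and then needs the \emph{second} local property $(K^{[X,A]})^{[X,B]}=K^{[X,A\cup B]}$ (Lemma \ref{main-local-bis}) to upgrade it to the full martingale relation between consecutive levels. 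You instead exploit the algebraic compatibility $\chi_{\supp\varphi\cup B_n}K_{B_m}\chi_{\supp\varphi\cup B_n}=K_{B_n}$ and apply the \emph{first} local property twice to the single kernel $K_{B_m}$ (once with conditioning set $B_m$, once with $B_n$), so that the martingale identity reduces to the bare tower property for $\#_{\supp\varphi}$ under $\PP_{K_{B_m}}$; this bypasses Lemma \ref{main-local-bis} entirely, which is a real economy. Two points to tighten. First, what Lemma \ref{main-local} literally yields is $(K_{B_n})^{[X,B_n]}=Q_\varphi (K_{B_m})^{[X,B_n]}Q_\varphi$; to replace $Q_\varphi$ by $\chi_{\supp\varphi}$ at the level of traces you should note that the range of $(K_{B_m})^{[X,B_n]}$ lies in $\mathbb{C}\varphi\oplus L^2(B_m\setminus B_n)$, on which the two compressions agree. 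Second, your final identification compares measures living on the varying spaces $\Conf(B_n^c)$ inside $\Conf(E)$, which forces you to verify that the conditional intensity over $A\cap(W\setminus B_n)$ vanishes (you do supply this, and the monotone-class argument behind ``another application of L\'evy's theorem'' should be spelled out); the paper's formulation via the push-forwards $(\pi_{W^c})_{*}[\PP_K(\cdot\,|\,X,B_n)]$, whose kernels are exactly $\chi_{E\setminus W}K^{[X,B_n]}\chi_{E\setminus W}$, avoids this issue and is the cleaner way to close the argument.
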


\begin{remark*}
For fixed $W$, the kernel-valued function $X \mapsto K^{[X, \, W]}$ almost surely does not depend on the choice of the approximating sequence $B_1 \subset \cdots \subset B_n \subset \cdots \subset W$.
\end{remark*}

 \subsubsection{The local property and the martingale lemma}

At the centre of our argument lies

\begin{lemma}[First local property of conditional kernels]\label{main-local}
Let $B\subset E$ be a  bounded Borel subset and  let  $Q$ be  an orthogonal projection, acting in $L^2(E, \mu)$,  such that
 $\Ran(Q) \subset L^2(E\setminus B, \mu)$ and the operator $QKQ$ is  locally trace class.
For $\PP_K$-almost every $X\in \Conf(E)$, we have
\begin{equation}\label{bqk}
\Big((Q + \chi_B)K (Q+\chi_B)\Big)^{[X, \, B]} = (Q  + \chi_B )K^{[X, \, B]} (Q  + \chi_B) =  Q K^{[X, \, B]} Q.
\end{equation}
\end{lemma}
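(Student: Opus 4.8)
The plan is to trace the asserted identity through Definition \ref{def-canonical-kernel} in two stages --- the Palm reduction by the points of $X\cap B$, followed by the ``hole'' operation $M\mapsto \chi_{E\setminus B}M(1-\chi_B M)^{-1}\chi_{E\setminus B}$ --- and to check that compression by $P:=Q+\chi_B$ commutes with the first stage and is intertwined by the second. I would first record the facts used throughout. Since $\Ran(Q)\subset L^2(E\setminus B,\mu)$ and $Q=Q^{*}$, we have $L^2(B,\mu)\subset\ker Q$, hence $Q\chi_B=\chi_B Q=0$ and $P$ is an orthogonal projection; the operator $L:=PKP$ is a self-adjoint positive contraction, and it is locally trace class because its diagonal blocks $\chi_B K\chi_B$ and $QKQ$ are, so Definition \ref{def-canonical-kernel} applies to $L$. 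Moreover $\chi_B L\chi_B=\chi_B K\chi_B$, so $\PP_K$ and $\PP_L$ have the same pushforward under $X\mapsto X\cap B$; consequently the event $\{X\cap B\subset E_0\}$ and (once the Palm step is in hand) the invertibility of $1-\chi_B L^{p_1,\dots,p_l}$ have full $\PP_K$-measure, so it suffices to prove \eqref{bqk} on this ``good'' event. The second equality in \eqref{bqk} is immediate: $K^{[X,B]}=\chi_{E\setminus B}K^{[X,B]}\chi_{E\setminus B}$ together with $\chi_B P=\chi_B$ forces $PK^{[X,B]}P=QK^{[X,B]}Q$; so it remains to show $L^{[X,B]}=QK^{[X,B]}Q$.

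For the Palm stage I would use that a one-point Palm reduction is a rank-one perturbation: the kernel formula of Definition \ref{def-canonical-kernel} gives $K^{p}=K-K_{p}\otimes\overline{K_{p}}/K(p,p)$ with $K_{p}=K(\cdot,p)\in L^2(E,\mu)$, and likewise $L^{p}=L-L_{p}\otimes\overline{L_{p}}/L(p,p)$ with $L_{p}=L(\cdot,p)$. The key point is that for $\mu$-almost every $p\in B$ one has $L_{p}=PK_{p}$ and $L(p,p)=K(p,p)$: testing $L$ against $g\in L^2(B,\mu)$ gives $Lg=PK(Pg)=PKg$ since $Pg=g$, which forces $L(\cdot,y)=PK(\cdot,y)$ for $\mu$-a.e.\ $y\in B$, while $\chi_B L\chi_B=\chi_B K\chi_B$ gives the diagonal identity. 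Since the first correlation measure of $\PP_K$ on $B$ is absolutely continuous with respect to $\mu$, for $\PP_K$-a.e.\ $X$ each $p\in X\cap B$ lies off the relevant $\mu$-null set, so $L^{p}=L-(PK_{p})\otimes\overline{(PK_{p})}/K(p,p)=P K^{p} P$. Applying the same computation successively to $K^{p_1},(K^{p_1})^{p_2},\dots$ --- each again a self-adjoint operator, with $\chi_B PTP\chi_B=\chi_B T\chi_B$ for every $T$ --- gives, by induction on the $l$ points, $L^{p_1,\dots,p_l}=P\,K^{p_1,\dots,p_l}\,P$; and since $\chi_B K^{p_1,\dots,p_l}$ is block upper-triangular along $L^2(B,\mu)\oplus L^2(E\setminus B,\mu)$, invertibility of $1-\chi_B L^{p_1,\dots,p_l}$ is equivalent to that of $1-\chi_B K^{p_1,\dots,p_l}$.

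For the hole stage, set $M:=K^{p_1,\dots,p_l}$ and $N:=PMP=L^{p_1,\dots,p_l}$, and work in the orthogonal decomposition $L^2(E,\mu)=L^2(B,\mu)\oplus\Ran(Q)\oplus V$ where $V:=L^2(E\setminus B,\mu)\ominus\Ran(Q)$. Writing $M$ as a $3\times3$ block operator, $N$ is $M$ with its $V$-row and $V$-column deleted, and $\chi_B N$ is block upper-triangular; a direct computation of $(1-\chi_B N)^{-1}$ shows that $\chi_{E\setminus B}N(1-\chi_B N)^{-1}\chi_{E\setminus B}$ is the operator on $L^2(E\setminus B,\mu)$ supported on $\Ran(Q)$ whose $(\Ran Q,\Ran Q)$-block equals $M_{QB}(1-M_{BB})^{-1}M_{BQ}+M_{QQ}$. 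The identical block of $\chi_{E\setminus B}M(1-\chi_B M)^{-1}\chi_{E\setminus B}=K^{[X,B]}$ is the same expression $M_{QB}(1-M_{BB})^{-1}M_{BQ}+M_{QQ}$, and compressing $K^{[X,B]}$ by $Q$ retains precisely that block; hence $L^{[X,B]}=\chi_{E\setminus B}N(1-\chi_B N)^{-1}\chi_{E\setminus B}=QK^{[X,B]}Q$, which, with the second equality of \eqref{bqk}, completes the proof.

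I expect the real difficulty to be measure-theoretic bookkeeping rather than algebra. One must fix a Borel realization of the kernel of $L=PKP$ compatible with the fixed realization of $K$ so that the pointwise identities $L(\cdot,p)=PK_{p}$ and $L(p,p)=K(p,p)$ hold at the random points $p\in X\cap B$; one must check that the successive ``columns'' $K_{p},(K^{p_1})_{p_2},\dots$ lie in $L^2$ for the relevant almost-every choices of points; and one must verify, using absolute continuity of the correlation measures of $\PP_K$ with respect to the appropriate powers of $\mu$, that the $\PP_K$-null exceptional sets collected at the $l$ Palm steps and in the invertibility conditions are genuinely null, while confirming that every intermediate operator remains a locally trace class positive contraction so that Definition \ref{def-canonical-kernel} keeps applying. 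The algebraic core --- rank-one perturbations for the Palm step and a $3\times3$ block inversion for the hole step --- is short once this is set up.
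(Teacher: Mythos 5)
Your proposal is correct and follows essentially the same route as the paper: the paper's Lemma \ref{lem-palm} is exactly your Palm-stage rank-one induction showing $\bigl((Q+\chi_B)K(Q+\chi_B)\bigr)^{p_1,\dots,p_l}=(Q+\chi_B)K^{p_1,\dots,p_l}(Q+\chi_B)$, and its Lemma \ref{lem-res} is your hole-stage intertwining, computed there by the Neumann series $\sum_n \chi_{E\setminus B}R(\chi_B R)^n\chi_{E\setminus B}$ rather than your $3\times3$ block inversion (the two computations are equivalent, with the required strict contractivity, respectively invertibility, supplied by the positive gap probability $\PP_{K^{X\cap B}}(\#_B=0)>0$). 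The measure-theoretic bookkeeping you flag at the end is handled in the paper by proving $R(\cdot,p)=(Q+\chi_B)[K(\cdot,p)]$ for \emph{every} $p\in B\cap E_0$ via an orthonormal-basis realization of $Q$, which sidesteps the accumulation of $\mu$-null sets at the random points.
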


 \begin{remark*}The formula (\ref{bqk}) is a strengthening,  on the level of kernels,  of the general property of point processes that
conditioning on the restriction to a subset commutes with the forgetting projection onto a larger subset;
see Proposition \ref{prop-local-meas} below.
 The local property  can be interpreted in terms of Neretin's formalism in \cite{neretin-fock}:
a determinantal measure is viewed as a ``determinantal state''  on a specially constructed algebra, and in order that conditional states  themselves  be determinantal the local property must take place. The local property can thus be seen as the noncommutative analogue of the fact that the operation of conditioning commutes with the operation of restriction of a configuration onto a subset.
\end{remark*}

Let $A, B$ be two disjoint bounded Borel subsets of $E$.
 It is a general property of point processes that conditioning first on $A$ and then on $B$ amounts to a  single conditioning on $A\cup B$.
A manifestation of this general property on the level of kernels is
\begin{lemma}[Second local property of conditional kernels]\label{main-local-bis}
Let $A, B$ be two disjoint bounded Borel subsets of $E$. For $\PP_K$-almost every $X\in \Conf(E)$, we have
\begin{align*}
(K^{[X, \, A]})^{[X, \, B]} = (K^{[X, \, B]})^{[X, \, A]} = K^{[X, \, A\cup B]}.
\end{align*}
\end{lemma}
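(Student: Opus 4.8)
The plan is to reduce the claimed kernel identity to the already-established first local property (Lemma \ref{main-local}) together with the probabilistic statement that iterated conditioning equals joint conditioning. First, fix $X$ in the full-measure set where all the relevant objects in Definition \ref{def-canonical-kernel} are well-defined (the $n$-tuples lie in $E_0$, the operators $1-\chi_A K^{\cdots}$, $1-\chi_B K^{\cdots}$, $1-\chi_{A\cup B}K^{\cdots}$ are invertible, and the conclusions of Lemma \ref{main-local} hold for the relevant projections). By symmetry of the roles of $A$ and $B$ it suffices to prove $(K^{[X,\,A]})^{[X,\,B]} = K^{[X,\,A\cup B]}$; the other equality follows by interchanging $A$ and $B$, and then transitivity gives the full chain.

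The key structural observation is that $K^{[X,\,A]}$ already satisfies $K^{[X,\,A]} = \chi_{E\setminus A} K^{[X,\,A]} \chi_{E\setminus A}$, so conditioning it further on the configuration $X\cap B$ inside $B\subset E\setminus A$ is an instance of the construction applied to the operator $K^{[X,\,A]}$ on the space $L^2(E\setminus A,\mu)$. I would first verify the compatibility of the Palm-type kernels: starting from $K$, removing the points of $X\cap A$ and then the points of $X\cap B$ produces, by the associativity built into the notation $K^{p_1,\dots,p_n} = (\cdots(K^{p_1})^{p_2}\cdots)^{p_n}$, the same kernel as removing the points of $X\cap(A\cup B)$ in one batch (this is the content of the Shirai-Takahashi cofactor identity \cite[Corollary 6.6]{ST-palm}, since the determinantal ratio in \eqref{def-Palm-kernel} is symmetric in the removed points). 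The only subtlety is that the order of removal is different, but the determinant formula is manifestly order-independent up to sign cancellations in numerator and denominator. Then the telescoping identity $K^{[X,\,A\cup B]} = \chi_{E\setminus(A\cup B)} K^{p's}(1-\chi_{A\cup B}K^{p's})^{-1}\chi_{E\setminus(A\cup B)}$ must be matched with the two-stage compression; here I would use the resolvent identity
\[
(1-\chi_{A\cup B}K)^{-1} = (1-\chi_A K - \chi_B K)^{-1}
\]
and the standard fact that compressing $\chi_{E\setminus A}K^{p_A}(1-\chi_A K^{p_A})^{-1}\chi_{E\setminus A}$ further by $1-\chi_B(\cdot)$ and inverting reproduces the single-step formula, which is a purely algebraic manipulation with the Neumann series already used in \eqref{canonical-KXB}.

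Where Lemma \ref{main-local} enters—and what I expect to be the main obstacle—is controlling the passage from the resolvent $(1-\chi_B K^{[X,\,A]})^{-1}$, which a priori must be formed inside $L^2(E\setminus A,\mu)$, back to an operator identity on all of $L^2(E,\mu)$, and ensuring the two Borel realizations of the kernels (fixed once and for all via \eqref{canonical-KXB}) actually agree pointwise on $E_0\times E_0$ and not merely as operators. This is exactly the kind of statement the first local property is designed to handle: taking $Q = \chi_{E\setminus A}$ (or more precisely the projection onto $L^2(E\setminus(A\cup B),\mu)$ inside $E\setminus B$) in \eqref{bqk} lets me commute the compression $Q(\cdot)Q$ through the conditioning operation, so that $(K^{[X,\,A]})^{[X,\,B]}$ computed on $E\setminus A$ restricts correctly. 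Finally, I would invoke Lemma \ref{key-lem}, or rather Proposition \ref{prop-DPP}, to cross-check: both $(K^{[X,\,A]})^{[X,\,B]}$ and $K^{[X,\,A\cup B]}$ are, for $\PP_K$-a.e. $X$, correlation kernels for the same measure $\PP_K(\cdot\,|\,X; A\cup B)$ (by Proposition \ref{prop-local-meas}, iterated conditioning equals joint conditioning at the level of measures), hence the two self-adjoint operators coincide; then a separate argument—tracking the explicit realization \eqref{canonical-KXB} through both constructions—upgrades the operator equality to the pointwise kernel equality on $E_0\times E_0$. The delicate point throughout is bookkeeping of the exceptional null sets and the invertibility hypotheses, but none of these obstructions is serious once the Palm-kernel associativity and the first local property are in hand.
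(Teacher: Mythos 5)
There is a genuine gap at the decisive step of your argument. You conclude that $(K^{[X,\,A]})^{[X,\,B]}$ and $K^{[X,\,A\cup B]}$ coincide as operators because ``both are correlation kernels for the same measure $\PP_K(\cdot\,|\,X;A\cup B)$, hence the two self-adjoint operators coincide.'' This inference is false in general: a determinantal point process does \emph{not} determine its Hermitian correlation kernel uniquely. The correlation functions $\det(L(x_i,x_j))$ are invariant under the gauge transformation $L(x,y)\mapsto u(x)L(x,y)\overline{u(y)}$ with $|u|\equiv 1$, which changes the operator (it conjugates $L$ by a multiplication unitary). So equality of the conditional \emph{measures} --- which is indeed guaranteed by Proposition \ref{prop-local-meas} and the tower property of conditioning --- gives you equality of the two kernels only up to such a gauge, not equality of the two specific operators produced by Definition \ref{def-canonical-kernel}. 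The paper's proof is built precisely to circumvent this: it applies the first local property not with $Q=\chi_{E\setminus A}$ but with a \emph{rank-one} projection $Q=\varphi\otimes\overline{\varphi}$, forming $R=(\chi_A+\chi_B+Q)K(\chi_A+\chi_B+Q)$, so that $R^{[X,\,A\cup B]}=QK^{[X,\,A\cup B]}Q$ and $(R^{[X,\,A]})^{[X,\,B]}=Q(K^{[X,\,A]})^{[X,\,B]}Q$ are rank-$\le 1$ operators whose traces equal the quadratic forms $\langle K^{[X,\,A\cup B]}\varphi,\varphi\rangle$ and $\langle (K^{[X,\,A]})^{[X,\,B]}\varphi,\varphi\rangle$. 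These traces are the expected particle numbers in $E\setminus(A\cup B)$ under the respective conditional measures of $\PP_R$, hence \emph{are} measure-invariants; equality of the measures then forces equality of the quadratic forms for every $\varphi$, and separability of $L^2(E\setminus(A\cup B),\mu)$ gives operator equality. (A final transfer from $\PP_R$-a.s.\ to $\PP_K$-a.s.\ uses $(\pi_{A\cup B})_*\PP_R=(\pi_{A\cup B})_*\PP_K$ and $\mathcal F(A\cup B)$-measurability of both kernels.) Your use of Lemma \ref{main-local} with a large projection does not supply this missing invariant.

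Your proposed alternative, purely algebraic route is also incomplete as stated. The Shirai--Takahashi identity gives order-independence of the Palm operation, $K^{p,q}=(K^p)^q=(K^q)^p$, but what the two-stage construction actually requires is a commutation between the Palm operation and the resolvent compression: you must identify $(K^{[X,\,A]})^{q_1,\dots,q_m}$, i.e.\ the Palm kernel of $\chi_{E\setminus A}K^{\mathfrak p}(1-\chi_A K^{\mathfrak p})^{-1}\chi_{E\setminus A}$ at the points of $X\cap B$, with a one-step expression in $K^{\mathfrak p, q_1,\dots,q_m}$. This interleaving is not addressed by the resolvent identity $(1-\chi_{A\cup B}K)^{-1}=(1-\chi_AK-\chi_BK)^{-1}$ (which is just $\chi_{A\cup B}=\chi_A+\chi_B$) and would need a separate proof. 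Either supply that algebraic identity in full, or adopt the rank-one compression argument above.
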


Using the local properties, we establish the following  key martingale property of the kernels $K^{[X,B]}$.
\begin{lemma}\label{lem-mart-prop}
Let $W\subset E$ be a Borel subset, let $B_1 \subset \cdots \subset B_n \subset \cdots \subset W$ be an increasing exhausting sequence of bounded Borel subsets of $W$. The sequence of random variables
\[
\Big( \chi_{E\setminus W}  K^{[X, \, B_n]} \chi_{E\setminus W}  \Big)_{n\in \N}
\]
is an $(\mathcal{F}(B_n))_{n\in\N}$-adapted operator-valued martingale defined on the probability space $(\Conf(E), \mathcal{F}(E), \PP_K)$.
\end{lemma}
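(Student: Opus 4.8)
The plan is to establish the one–step martingale identity
\[
\E_{\PP_K}\bigl[\chi_{E\setminus W}K^{[X,\,B_{n+1}]}\chi_{E\setminus W}\bigm|\mathcal{F}(B_n)\bigr]=\chi_{E\setminus W}K^{[X,\,B_n]}\chi_{E\setminus W},\qquad n\in\N,
\]
from which the martingale property along the whole filtration $(\mathcal{F}(B_n))_{n\in\N}$ follows by the tower property. Adaptedness holds because, by Definition \ref{def-canonical-kernel}, $K^{[X,\,B_n]}$ is a measurable function of $X\cap B_n$ alone; and for any bounded Borel $B'$ the quantity $\bigl\|\chi_{B'}\bigl(\chi_{E\setminus W}K^{[X,\,B_n]}\chi_{E\setminus W}\bigr)\chi_{B'}\bigr\|_1$ is integrable, since $K^{[X,\,B_n]}\ge 0$ makes it equal to $\tr\bigl(\chi_{B'\setminus W}K^{[X,\,B_n]}\chi_{B'\setminus W}\bigr)$, whose $\PP_K$–expectation is, by Proposition \ref{prop-DPP}, equal to $\E_{\PP_K}\#\bigl(X\cap(B'\setminus W)\bigr)\le\tr(\chi_{B'}K\chi_{B'})<\infty$. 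To pass from the operator identity to a scalar one I would fix a bounded Borel set $B'\subset E\setminus W$ and test both sides against rank–one projections $Q=\varphi\otimes\varphi$ with $\varphi\in L^2(B',\mu)$, $\|\varphi\|=1$: the Bochner conditional expectation in the separable Banach space $\mathscr{L}_1\bigl(L^2(B',\mu)\bigr)$ commutes with the bounded functional $T\mapsto\langle T\varphi,\varphi\rangle$, and a trace–class operator is determined by its quadratic form on a total set, so it is enough to prove, $\PP_K$–almost surely and for each such $\varphi$,
\[
\E_{\PP_K}\bigl[\langle K^{[X,\,B_{n+1}]}\varphi,\varphi\rangle\bigm|\mathcal{F}(B_n)\bigr]=\langle K^{[X,\,B_n]}\varphi,\varphi\rangle ;
\]
note that $\varphi\in L^2(E\setminus W,\mu)\subset L^2(E\setminus B_{n+1},\mu)\subset L^2(E\setminus B_n,\mu)$.

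To prove the scalar identity I would condition on $\mathcal{F}(B_n)$, write $L:=K^{[X,\,B_n]}$ so that $\PP_K(\cdot\mid X;B_n)=\PP_L$ by Proposition \ref{prop-DPP}, and set $A:=B_{n+1}\setminus B_n$, a bounded Borel set disjoint from $B_n$ with $A\cup B_n=B_{n+1}$. By the second local property (Lemma \ref{main-local-bis}), $K^{[Z,\,B_{n+1}]}=\bigl(K^{[Z,\,B_n]}\bigr)^{[Z,\,A]}=L^{[Z,\,A]}$ for $\PP_K$–a.e.\ configuration $Z$ with $Z\cap B_n=X\cap B_n$, and this expression depends on $Z$ only through $Z\cap A$; hence the defining property of the Rohlin conditional measures yields
\[
\E_{\PP_K}\bigl[\langle K^{[X,\,B_{n+1}]}\varphi,\varphi\rangle\bigm|\mathcal{F}(B_n)\bigr](X)=\E_{Y\sim\PP_L}\langle L^{[Y,\,A]}\varphi,\varphi\rangle .
\]
Now I would invoke the first local property (Lemma \ref{main-local}) with the kernel $L$ in place of $K$, the set $A$ in place of $B$, and the one–dimensional projection $Q=\varphi\otimes\varphi$, which is admissible since $\Ran Q\subset L^2(E\setminus A,\mu)$ and $QLQ=\langle L\varphi,\varphi\rangle\,Q$ is rank one, hence locally trace class. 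Writing $N:=(Q+\chi_A)L(Q+\chi_A)$, this gives $N^{[Y,\,A]}=QL^{[Y,\,A]}Q=\langle L^{[Y,\,A]}\varphi,\varphi\rangle\,Q$ for $\PP_L$–a.e.\ $Y$, so $\langle L^{[Y,\,A]}\varphi,\varphi\rangle=\tr N^{[Y,\,A]}$. Since $\chi_A Q=0$ one has $\chi_A N\chi_A=\chi_A L\chi_A$, so $\PP_L$ and $\PP_N$ induce the same law on $\Conf(A)$ under $Z\mapsto Z\cap A$ (their correlation functions on $A$ being the minors of the common kernel $\chi_A L\chi_A$), and as $\tr N^{[Y,\,A]}$ depends only on $Y\cap A$ this lets me replace $\E_{Y\sim\PP_L}$ by $\E_{Y\sim\PP_N}$. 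Finally, Proposition \ref{prop-DPP} applied to $N$ gives $\PP_N(\cdot\mid Y;A)=\PP_{N^{[Y,\,A]}}$, whence $\tr N^{[Y,\,A]}=\E_{\PP_N}\bigl[\#(Z\cap(E\setminus A))\bigm|Z\cap A\bigr]$; integrating and using $\chi_{E\setminus A}N\chi_{E\setminus A}=QLQ=\langle L\varphi,\varphi\rangle\,Q$ I obtain
\[
\E_{Y\sim\PP_L}\langle L^{[Y,\,A]}\varphi,\varphi\rangle=\E_{\PP_N}\#\bigl(Z\cap(E\setminus A)\bigr)=\tr\bigl(\chi_{E\setminus A}N\chi_{E\setminus A}\bigr)=\langle L\varphi,\varphi\rangle=\langle K^{[X,\,B_n]}\varphi,\varphi\rangle ,
\]
which is exactly the scalar identity.

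To finish I would run the scalar identity over a countable dense family of $\varphi\in L^2(B',\mu)$ off one $\PP_K$–null set, extend to all $\varphi\in L^2(B',\mu)$ by continuity, and conclude by polarization that $\E_{\PP_K}\bigl[\chi_{B'}(\chi_{E\setminus W}K^{[X,\,B_{n+1}]}\chi_{E\setminus W})\chi_{B'}\bigm|\mathcal{F}(B_n)\bigr]=\chi_{B'}(\chi_{E\setminus W}K^{[X,\,B_n]}\chi_{E\setminus W})\chi_{B'}$ in $\mathscr{L}_1(L^2(B',\mu))$ for every bounded Borel $B'\subset E\setminus W$; since these compressions determine the element of $\mathscr{L}_{1,loc}(L^2(E\setminus W,\mu))$, this is the asserted martingale property. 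The step I expect to demand the most care is the nested–conditioning bookkeeping: one must verify that Lemmas \ref{main-local} and \ref{main-local-bis} and Proposition \ref{prop-DPP}, though stated for the fixed pair $(K,\PP_K)$, are genuinely statements about an arbitrary self–adjoint locally trace class positive contraction together with its determinantal process, so that they apply to the random intermediate kernel $L=K^{[X,\,B_n]}$ and to $N=(Q+\chi_A)L(Q+\chi_A)$; and that the resulting almost–sure identities can be concatenated across the two successive conditionings, for which the coincidence $\chi_A L\chi_A=\chi_A N\chi_A$ and the fact that every object in sight depends on the configuration only through its finite trace on a bounded set are precisely what makes the argument go through. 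The remaining ingredients — measurability of $X\cap B_n\mapsto K^{[X,\,B_n]}$ as a map into $\mathscr{L}_{1,loc}$, existence of the Bochner conditional expectation, and the polarization step — are routine.
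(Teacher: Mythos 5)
Your proposal is correct and follows essentially the same route as the paper: the paper isolates your scalar identity $\E_{\PP_L}\langle L^{[Y,A]}\varphi,\varphi\rangle=\langle L\varphi,\varphi\rangle$ as Proposition \ref{prop-1-step} (proved, exactly as you do, via Lemma \ref{main-local} with a rank-one projection, the coincidence $(\pi_A)_*\PP_N=(\pi_A)_*\PP_L$, and the particle-count interpretation of the trace of the conditional kernel), then applies it to the random kernel $K^{[X,B_n]}$ and the set $B_{n+1}\setminus B_n$ and concatenates via Lemma \ref{main-local-bis}, just as in your argument. The point you flag as delicate --- that Lemmas \ref{main-local}, \ref{main-local-bis} and Proposition \ref{prop-DPP} apply to the intermediate kernels $L$ and $N$ --- is handled implicitly in the paper in the same way, since those statements hold for any positive self-adjoint locally trace class contraction.
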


By definition, we have $K^{[X, \, B]} =  K^{[X \cap B, \, B]}$. Hence the mapping $X \mapsto K^{[X, \, B]}$ is an $\mathcal{F}(B)$-measurable operator-valued random variable defined on the probability space $(\Conf(E), \mathcal{F}(E), \PP_K)$.  Lemma  \ref{lem-mart-prop} is equivalent to the claim that,  for any $\varphi \in L^2(E\setminus W, \mu)$, the sequence
$
\big(  \big\langle  \chi_{E\setminus W}  K^{[X, \, B_n]} \chi_{E\setminus W} \varphi, \varphi\rangle  \big)_{n\in \N}
$
is an $(\mathcal{F}(B_n))_{n\in\N}$-adapted real-valued martingale defined on the probability space $(\Conf(E), \mathcal{F}(E), \PP_K)$.  This notion of being a martingale is equivalent to the general notion of  Frechet space valued martingales, cf. Pisier  \cite{pisier-B-martingale}.

\begin{remark*}
The proof of Lemma \ref{lem-mart-prop} below in fact yields a stronger statement:  the sequence of exterior power operators
\[
\Big( \big( \chi_{E\setminus W}  K^{[X, \, B_n]} \chi_{E\setminus W}\big)^{\wedge m}  \Big)_{n\in \N}
\]
is an $(\mathcal{F}(B_n))_{n\in\N}$-adapted operator-valued martingale, defined on the probability space $(\Conf(E), \mathcal{F}(E), \PP_K)$ and  almost surely convergent to
$
\big( \chi_{E\setminus W}  K^{[X, \, W]} \chi_{E\setminus W}\big)^{\wedge m}.
$
\end{remark*}

\noindent {\bf Acknowledgements.}
We are deeply grateful to Alexander Borichev and Alexei Klimenko for useful discussions and very helpful comments.
The research of A. Bufetov on this project has received funding from the European Research Council (ERC) under the European Union's Horizon 2020 research and innovation programme under grant agreement No 647133 (ICHAOS). It has also been funded by the Gabriel Lam\'e Chair  at the Chebyshev Laboratory of the SPbSU, a joint initiative of the French Embassy in the Russian Federation and the Saint-Petersburg State University.
 Y. Qiu is supported by the grant IDEX UNITI - ANR-11-IDEX-0002-02, financed by Programme ``Investissements d'Avenir'' of the Government of the French Republic managed by the French National Research Agency and is partially supported by NSF of China (Grant No. 11688101).
Part of this work was done at the CIRM in the framework of ``recherche en petit groupe'' programme, at Ushinsky University Yaroslavl  and the  Lomonosov Arctic University (Koryazhma branch). We are deeply grateful to these institutions for their  warm hospitality.

\section{Conditional processes and martingales}\label{sec-preliminaries}

\subsection{Martingales and the Radon-Nikodym property}\label{sec-conv-mart}
\subsubsection{Vector-valued and measure-valued martingales}

Let $(\Omega, \mathscr{F}, (\mathscr{F}_n)_{n=1}^\infty, \bold{P})$ be a filtered probability space. Let $\mathfrak{B}$ be a Banach space.  A map $F: \Omega\rightarrow \mathfrak{B}$ is called  Bochner measurable, if there exists a sequence $F_n$ of measurable, in the usual sense, step functions such that $F_n(\omega) \to F(\omega)$ almost everywhere. For any $1\le p < \infty$, we denote by $L^p(\Omega, \mathscr{F}, \bold{P}; \mathfrak{B})$ the set of all Bochner measurable functions $F: \Omega \rightarrow \mathfrak{B}$, such that $\int_\Omega \| F(\omega)\|_{\mathfrak{B}}^p \bold{P}(\dd \omega) < \infty$.  The space  $L^p(\Omega, \mathscr{F}, \bold{P}; \mathfrak{B})$ is a Banach space with the norm
 \[
 \|F\|_{L^p(\mathfrak{B})}: =  (\int_\Omega \| F(\omega)\|_{\mathfrak{B}}^p \bold{P}(\dd \omega))^{1/p}.
 \]
The algebraic tensor product $L^p(\Omega, \mathscr{F}, \bold{P}) \otimes \mathfrak{B}$ is dense in  $L^p(\Omega, \mathscr{F},\bold{P}; \mathfrak{B})$.  The operator
  \[
  \E[\cdot | \mathscr{F}_n] \otimes Id_{\mathfrak{B}} :  L^p(\Omega, \mathscr{F}, \bold{P}) \otimes \mathfrak{B} \rightarrow L^p(\Omega, \mathscr{F}, \bold{P}) \otimes \mathfrak{B}
  \]
  extends uniquely to a bounded linear operator on $L^p(\Omega, \mathscr{F}, \bold{P};  \mathfrak{B})$, for which we keep the name ``conditional expectation''  and the notation, thus obtaining the operator  $  \E[\cdot | \mathscr{F}_n]:  L^p(\Omega, \mathscr{F}, \bold{P}; \mathfrak{B}) \rightarrow L^p(\Omega, \mathscr{F}, \bold{P};\mathfrak{B}).
$
A sequence $(R_n)_{n=1}^\infty$ in $L^p(\Omega, \mathscr{F}, \bold{P}; \mathfrak{B})$ is called an $(\mathscr{F}_n)_{n=1}^\infty$-adapted martingale if $R_n = \E[R_{n+1} | \mathscr{F}_n]$ for any $n\in \N$.

    Assume  now that $\mathfrak{B}$ is a {\it separable} space. Then there exists a countable subset $D$ of the unit ball of the dual space $\mathfrak{B}^*$ such that for any $x\in \mathfrak{B}$, we have $\| x \| = \sup_{\xi \in D} | \xi (x)|$.   We will need the Pettis measurability theorem for separable Banach spaces.

   \begin{proposition}[{\cite[p. 278]{Pettis}}]\label{prop-pettis}
     A function $F: \Omega \rightarrow \mathfrak{B}$ is Bochner measurable with respect to $\mathscr{F}$ if and only if for any $\xi \in D$, the scalar function $\omega \rightarrow \xi(F(\omega))$ is $\mathscr{F}$-measurable.  A sequence $(R_n)_{n=1}^\infty$ in $L^p(\Omega, \mathscr{F}, \bold{P}; \mathfrak{B})$ is an $(\mathscr{F}_n)_{n=1}^\infty$-adapted martingale if and only if for any $\xi \in D$, the sequence $(\xi(R_n))_{n=1}^\infty$ is an $(\mathscr{F}_n)_{n=1}^\infty$-adapted martingale.
     \end{proposition}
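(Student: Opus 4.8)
The plan is to deduce both halves of the proposition from one structural fact: since $\mathfrak{B}$ is separable and $D$ is a \emph{countable norming} subset of the unit ball of $\mathfrak{B}^*$, the Borel $\sigma$-algebra of $\mathfrak{B}$ coincides with $\sigma(\xi:\xi\in D)$, the $\sigma$-algebra on $\mathfrak{B}$ generated by the evaluations against the functionals in $D$; combined with the classical fact that on a separable Banach space Borel measurability and Bochner (strong) measurability are the same, this says precisely that a map $F:\Omega\to\mathfrak{B}$ is $\mathscr{G}$-Bochner-measurable, for any $\sigma$-algebra $\mathscr{G}\subset\mathscr{F}$, if and only if $\xi\circ F$ is $\mathscr{G}$-measurable for every $\xi\in D$. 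Taking $\mathscr{G}=\mathscr{F}$ gives the first assertion, and taking $\mathscr{G}=\mathscr{F}_n$ handles the ``adapted'' part of the second assertion; the ``martingale'' part then needs only the commutation of the conditional expectation with bounded linear functionals.

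For the structural fact I would argue as follows. One inclusion, $\sigma(\xi:\xi\in D)\subset\mathrm{Borel}(\mathfrak{B})$, is clear since each $\xi$ is continuous. For the reverse, using the norming identity $\|x\|=\sup_{\xi\in D}|\xi(x)|$, I write every open ball as
\[
\{x\in\mathfrak{B}:\|x-b\|<r\}=\bigcup_{m\ge1}\;\bigcap_{\xi\in D}\;\{x\in\mathfrak{B}:|\xi(x)-\xi(b)|\le r-1/m\},
\]
a countable union of countable intersections of sets in $\sigma(\xi:\xi\in D)$ (each is the $\xi$-preimage of a closed ball in the scalar field); since $\mathfrak{B}$ is separable every open set is a countable union of open balls, so $\mathrm{Borel}(\mathfrak{B})\subset\sigma(\xi:\xi\in D)$ and the two coincide. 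The identification of Borel with Bochner measurability on separable $\mathfrak{B}$ is standard; its only non-obvious direction is that a Borel measurable $F$ is an a.e.\ limit of step functions, which one obtains by fixing a dense sequence $\{b_k\}\subset\mathfrak{B}$, observing that $\omega\mapsto\|F(\omega)-b_k\|=\sup_{\xi\in D}|\xi(F(\omega))-\xi(b_k)|$ is $\mathscr{F}$-measurable, letting $G_n(\omega):=b_{k_n(\omega)}$ with $k_n(\omega)$ the least index for which $\|F(\omega)-b_{k_n(\omega)}\|<1/n$ (so that $\|G_n-F\|<1/n$ pointwise), and then truncating each countably valued $G_n$ to a genuine step function, with a diagonal choice of truncation level so as to keep a.e.\ convergence to $F$.

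For the martingale part the one extra input I would establish is that for every $\xi\in\mathfrak{B}^*$ and every $G\in L^p(\Omega,\mathscr{F},\bold{P};\mathfrak{B})$ one has $\xi(\E[G\mid\mathscr{F}_n])=\E[\xi(G)\mid\mathscr{F}_n]$ a.e.; this is immediate on the dense subspace $L^p(\Omega,\mathscr{F},\bold{P})\otimes\mathfrak{B}$ directly from the definition of $\E[\cdot\mid\mathscr{F}_n]$ as the extension of $\E[\cdot\mid\mathscr{F}_n]\otimes\mathrm{Id}_{\mathfrak{B}}$, and both sides are bounded operators $L^p(\mathfrak{B})\to L^p$, so the identity passes to all of $L^p(\mathfrak{B})$. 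Now if $(R_n)$ is an $(\mathscr{F}_n)$-adapted $\mathfrak{B}$-valued martingale, then for $\xi\in D$ the scalar process $(\xi(R_n))$ is $(\mathscr{F}_n)$-adapted by the structural fact and satisfies $\xi(R_n)=\xi(\E[R_{n+1}\mid\mathscr{F}_n])=\E[\xi(R_{n+1})\mid\mathscr{F}_n]$, hence is a martingale. Conversely, assume $(\xi(R_n))$ is an $(\mathscr{F}_n)$-adapted scalar martingale for every $\xi\in D$. Then each $\xi(R_n)$ is $\mathscr{F}_n$-measurable for all $\xi\in D$, so by the structural fact $R_n$ is $\mathscr{F}_n$-Bochner-measurable, i.e.\ $(R_n)$ is adapted; moreover $\xi(R_n-\E[R_{n+1}\mid\mathscr{F}_n])=\xi(R_n)-\E[\xi(R_{n+1})\mid\mathscr{F}_n]=0$ a.e.\ for each $\xi\in D$, and as $D$ is countable these identities hold simultaneously off a single null set, on whose complement $\|R_n-\E[R_{n+1}\mid\mathscr{F}_n]\|=\sup_{\xi\in D}|\xi(R_n-\E[R_{n+1}\mid\mathscr{F}_n])|=0$. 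Hence $R_n=\E[R_{n+1}\mid\mathscr{F}_n]$ in $L^p(\mathfrak{B})$, so $(R_n)$ is a martingale.

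I expect the only genuinely non-formal step to be the passage, inside the structural fact, from measurability against the countable norming family $D$ alone to honest strong measurability; this is precisely where separability of $\mathfrak{B}$ enters, through the identity $\|x\|=\sup_{\xi\in D}|\xi(x)|$ and the approximation by a countable dense subset. Everything else — the tensor-product description of the conditional expectation and the elimination of countably many null sets — is routine. This is, of course, the classical Pettis measurability theorem; I reproduce the argument only because it will be applied in the sequel to the separable Banach space of trace-class operators.
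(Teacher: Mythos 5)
Your proof is correct, but note that the paper does not prove this proposition at all: it is stated as a citation to Pettis's 1938 paper (for the measurability criterion), with the martingale characterization left as a standard consequence of the tensor-product definition of the vector-valued conditional expectation. What you have written is a self-contained derivation of both halves, and every step checks out: the identity $\{x:\|x-b\|<r\}=\bigcup_{m}\bigcap_{\xi\in D}\{x:|\xi(x)-\xi(b)|\le r-1/m\}$ correctly reduces the Borel $\sigma$-algebra to $\sigma(\xi:\xi\in D)$ using separability and the norming property of $D$; the passage from Borel to Bochner measurability via the countably-valued selectors $G_n(\omega)=b_{k_n(\omega)}$ and a diagonal truncation is the standard argument and is the only genuinely non-formal step, as you say (one should record that the truncation levels $N_n$ are chosen so that $\mathbf{P}(k_n>N_n)$ is summable, whence Borel--Cantelli gives a.e.\ convergence, but your phrasing clearly intends this); the commutation $\xi(\E[G\mid\mathscr{F}_n])=\E[\xi(G)\mid\mathscr{F}_n]$ on the dense subspace $L^p(\Omega,\mathscr{F},\mathbf{P})\otimes\mathfrak{B}$ together with boundedness of both sides is exactly how the paper's definition of $\E[\cdot\mid\mathscr{F}_n]$ is meant to be used; and the elimination of countably many null sets via $\|y\|=\sup_{\xi\in D}|\xi(y)|$ closes the converse direction. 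Your argument buys a fully elementary, citation-free proof at the cost of about a page; the paper's choice to cite buys brevity. Nothing is missing.
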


In this paper, we  apply Proposition \ref{prop-pettis} in the particular case when $\mathfrak{B} = \mathscr{L}_1(L^2(E,\mu))$ and $D$ is the set of contractive finite rank operators on $L^2(E, \mu)$. Martingales in $\mathscr{L}_{1, loc}(L^2(E, \mu))$ are reduced to the previous case by restricting onto $L^2(B, \mu)$ with $B$ a bounded Borel subset of $E$.

Let $(T, \mathscr{A})$ be topological space equipped with the $\sigma$-algebra of Borel subsets of $T$.  We denote by $\mathfrak{P}(T, \mathscr{A})$ the set of probability measures on $(T, \mathscr{A})$.   A map $M: \Omega \rightarrow \mathfrak{P}(T, \mathscr{A})$ is called a random probability measure if for any $A\in\mathscr{A}$, the map $\omega \mapsto M(\omega, A): = M(\omega)(A)$ is measurable. A sequence of random probability measures $(M_n)_{n=1}^\infty$  is called an $(\mathscr{F}_n)_{n=1}^\infty$-adapted measure-valued martingale  on $(T, \mathscr{A})$ if for any $A \in \mathscr{A}$, the sequence $(M_n(\cdot, A))_{n\in\N}$ is a usual $(\mathscr{F}_n)_{n=1}^\infty$-adapted martingale.

\subsubsection{The Radon-Nikodym property}

In proving convergence of conditional kernels, we will use the Radon-Nikodym property for the space of trace class operators. Here we briefly recall the Radon-Nikodym property for Banach spaces; see Dunford-Pettis  \cite{RNP-1},  Phillips \cite{ RNP-Phillips} and Chapter 2 in Pisier's recent monograph \cite{pisier-B-martingale} for a more detailed exposition.

Let $\mathfrak{B}$ be a Banach space. Let $(\Omega, \mathscr{F})$ be a measurable space. Any $\sigma$-additive map $m: \mathscr{F} \rightarrow \mathfrak{B}$ is called a ($\mathfrak{B}$-valued) vector measure. A vector measure $m$ is said to have finite total variation if
\[
\sup \Big\{\sum_{i=1}^n \| m(A_i) \|_{\mathfrak{B}}  \big|\text{$\Omega = \bigsqcup_{i=1}^n  A_i$ is a measurable partition of $\Omega$} \Big\}< \infty.
\]
Given a probability measure $\bold{P}$ on $(\Omega, \mathscr{F})$, we say that the vector measure $m$ is absolutely continuous with respect to $\bold{P}$ if there exists a non-negative  function $w\in L^1(\Omega, \mathscr{F}, \bold{P})$ such that
\[
\| m(A)\|_{\mathfrak{B}} \le \int_{A} w \dd \bold{P}  \quad \text{for any $A\in \mathscr{F}$.}
\]

\begin{definition}
A Banach space $\mathfrak{B}$  is said to have the Radon-Nikodym property if for any probability space $(\Omega, \mathscr{F}, \bold{P})$ and any $\mathfrak{B}$-valued measure $m$ on $(\Omega, \mathscr{F})$, with $m$ having finite total variation and being absolutely continuous with respect to $\bold{P}$, there exists a Bochner integrable function $F_m\in L^1(\Omega, \mathscr{F}, \bold{P}; \mathfrak{B})$ such that
\[
 m(A) =  \int_{A}  F_m \dd\bold{P}  \quad \text{for any $A\in \mathscr{F}$.}
\]
\end{definition}
 By Theorem 2.9 in Pisier \cite{pisier-B-martingale}, the Radon-Nikodym property is equivalent to either of
the two requirements
\begin{enumerate}
\item Every $\mathfrak{B}$-valued martingale bounded in $L^1(\mathfrak{B})$ converges almost surely;
\item Every uniformly integrable $\mathfrak{B}$-valued martingale bounded in $L^1(\mathfrak{B})$ converges almost surely and in $L^1(\mathfrak{B})$.
\end{enumerate}

Corollary 2.15 in  Pisier \cite{pisier-B-martingale} states that if $\mathfrak{B}$ is separable and  is a dual space of another Banach space, then $\mathfrak{B}$ has the Radon-Nikodym property.
The separable space $\mathscr{L}_1(L^2(E, \mu))$  of trace class operators on $L^2(E, \mu)$  is the dual space of the space of compact operators on $L^2(E, \mu)$, and we have
\begin{proposition}\label{prop-rnp}
The space  $\mathscr{L}_1(L^2(E, \mu))$ has the Radon-Nikodym property.
\end{proposition}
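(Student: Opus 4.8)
The plan is to reduce the statement directly to Pisier's Corollary 2.15 recalled above, according to which a Banach space that is simultaneously separable and isometrically a dual space enjoys the Radon--Nikodym property. It therefore suffices to verify these two properties for $\mathscr{L}_1(L^2(E,\mu))$, both of which are essentially contained in the paragraph preceding the proposition; the proof is just their assembly.

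First I would record the duality. Since $E$ is $\sigma$-compact Polish and $\mu$ is $\sigma$-finite Radon, the Hilbert space $L^2(E,\mu)$ is separable; fix a countable orthonormal basis $(e_j)_{j\in\N}$. The classical Schatten duality then identifies $\mathscr{L}_1(L^2(E,\mu))$ isometrically with the dual of the space $\mathscr{K}(L^2(E,\mu))$ of compact operators on $L^2(E,\mu)$, the pairing being $(T,S)\mapsto \Tr(TS)$ with $T\in \mathscr{L}_1$ and $S\in \mathscr{K}$ (see e.g.\ Simon \cite{Simon-det} and standard references on trace ideals). Hence $\mathscr{L}_1(L^2(E,\mu)) = \mathscr{K}(L^2(E,\mu))^{*}$ is a dual Banach space.

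Second, I would check separability. Finite-rank operators are dense in $\mathscr{L}_1(L^2(E,\mu))$ in the trace-class norm, and the finite-rank operators whose matrices with respect to the basis $(e_j)$ have all entries in $\Q+i\Q$ form a countable norm-dense subset of $\mathscr{L}_1(L^2(E,\mu))$; therefore $\mathscr{L}_1(L^2(E,\mu))$ is separable. Combining this with the duality of the previous step, Pisier's Corollary 2.15 applies and yields the Radon--Nikodym property, which completes the proof.

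As all the ingredients are classical, there is no genuine obstacle here; the only point deserving a moment's attention is to present $\mathscr{L}_1(L^2(E,\mu))$ as the dual of the \emph{compact} operators $\mathscr{K}(L^2(E,\mu))$ — so that the dual-space hypothesis of Corollary 2.15 is met — rather than via the (also valid but useless here) identification of $\mathscr{L}_1(L^2(E,\mu))$ as the predual of $\mathscr{B}(L^2(E,\mu))$.
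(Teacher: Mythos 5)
Your proposal is correct and follows exactly the paper's route: the authors also invoke Pisier's Corollary 2.15 after noting that $\mathscr{L}_1(L^2(E,\mu))$ is separable and is the dual of the space of compact operators. The only difference is that you spell out the separability and Schatten-duality details that the paper leaves implicit.
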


\subsection{Conditional measures of point processes}\label{sec-conditional}
Let $E$ be a locally compact   $\sigma$-compact Polish space, endowed with a positive  $\sigma$-finite Radon measure $\mu$. We assume that the metric on $E$ is such that  any bounded set  is relatively compact, see Hocking and Young \cite[Theorem 2-61]{Hocking-Young}.

 A configuration $X = \{x_i\}$ on $E$ is by definition a {\it locally finite} countable subset of $E$, possibly with multiplicities.   A configuration is called simple if all points in it have multiplicity one. Let $\Conf(E)$ denote  the set of all configurations on $E$. The mapping $X \mapsto N_X: = \sum_{i} \delta_{x_i}$ embeds $\Conf(E)$ into the space of Radon measures on $E$.  Under the {\it vague topology}, $\Conf(E)$  is a Polish space, see, e.g., Daley and Vere-Jones \cite[Theorem 9.1. IV]{Daley-Vere}.  By definition, a {\it point process} on $E$ is a Borel probability  measure $\PP$ on $\Conf(E)$. We call $\PP$ simple if $\PP(\{X: \text{$X$ is simple}\}) = 1$.

For a Borel subset $W \subset E$, let $\mathcal{F}(W)$ be the $\sigma$-algebra on $\Conf(E)$ generated by all mappings $X \mapsto \#_B(X) : = \#(X \cap B)$, where $B \subset W$ are bounded Borel subsets; the algebra $\mathcal{F}(E)$ coincides with the Borel $\sigma$-algebra on $\Conf(E)$.

Take a Borel subset $W\subset E$. A Borel probability measure  $\PP$ on  $\Conf(E)$ can be  viewed as a measure on $\Conf(W) \times \Conf(W^c)$; we shall sometimes write  $\PP=\PP_{W, W^c}$ to stress dependence on $W$.

Denote by  $(\pi_W)_{*}(\PP)$  the image measure of $\PP$ under the surjective mapping $ \pi_W:  \Conf(E)  \rightarrow \Conf(W)$ defined by $\pi_W(X) = X\cap W$.  By  disintegrating the probability measure $\PP_{W, W^c}$, for $(\pi_W)_{*}(\PP)$-almost every configuration $X_{0}\in \Conf(W)$,  there exists a probability measure, denoted by $\PP(\cdot| X_{0}, W)$,  supported on $ \{X_{0}\} \times \Conf(W^c) \subset \Conf (E) $,  such that
\[
\PP_{W, W^c}= \int_{\Conf(W)}  \PP(\cdot| X_{0}, W)   (\pi_W)_{*}(\PP) (  \dd  X_{0}).
\]
The measure $\PP(\cdot| X_{0}, W)$ is referred to as the  conditional measure on $\Conf(W^c)$ or conditional point process on $W^c$ of $\PP$, the condition being that the configuration on $W$ coincides with $X_0$. In what follows, we denote also
\[
\PP (\cdot | X, W): = \PP(\cdot |  X\cap W, W), \quad \text{for $\PP$-almost every configuration $X\in \Conf(E)$}.
\]
Moreover, for a random variable $f \in L^1( \Conf(E), \PP)$, we will denote by
\[
\E_{\PP}(f| X, W) :=  \E_{\PP} [f |  \mathcal{F}(W)](X\cap W).
\]

\begin{proposition}\label{prop-local-meas}
Let $W_1, W_2$ be two disjoint Borel subsets of $E$.  For $\PP$-almost every $X\in \Conf(E)$, we have
\begin{align}\label{local-general-bis}
(\pi_{W_1\cup W_2})_{*}[\PP] (\cdot | X, W_1)  = ( \pi_{W_1\cup W_2})_{*} [\PP(\cdot| X, W_1)].
\end{align}

\end{proposition}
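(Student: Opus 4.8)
The plan is to translate the claimed identity of random probability measures on $\Conf(W)$, where $W := W_1 \cup W_2$, into an identity of two conditional expectations on the original space $(\Conf(E), \mathcal{F}(E), \PP)$, and then to invoke the standard transformation rule for conditional expectations under a measurable map. Throughout I write $r\colon \Conf(W)\to\Conf(W_1)$ for the restriction map $Y\mapsto Y\cap W_1$, so that $\pi_{W_1} = r\circ\pi_W$, and I let $\mathcal{G}$ denote the $\sigma$-algebra on $\Conf(W)$ generated by $r$ (equivalently, by the counting functions $\#_B$ with $B\subset W_1$ bounded Borel). Since $W_1\subset W$, each such $\#_B$ factors through $\pi_W$, whence $\mathcal{F}(W_1) = \pi_W^{-1}(\mathcal{G})$. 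This identification of $\sigma$-algebras is the only point at which the inclusion $W_1\subset W$ enters.

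First I would observe that both sides of (\ref{local-general-bis}) are Borel probability measures on the standard Borel (indeed Polish) space $\Conf(W)$, and that both depend on $X$ only through $X\cap W_1$. Hence it suffices to check, for each $f$ in a fixed countable family of bounded Borel functions on $\Conf(W)$ that separates probability measures, that $\int f\, d(\cdot)$ takes the same value on the two measures for $\PP$-almost every $X$; the exceptional $\PP$-null set is then the countable union, over $f$, of the individual exceptional sets.

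Next I would evaluate each of these integrals as a conditional expectation on $(\Conf(E), \PP)$. By the construction of \S\ref{sec-conditional}, $\PP(\cdot\,|\,X, W_1)$ is a regular version of the conditional law of $\PP$ given $\pi_{W_1}$, so that $\int g\, d\PP(\cdot\,|\,X, W_1) = \E_\PP[\,g\mid\mathcal{F}(W_1)\,](X)$ for every bounded Borel $g$ on $\Conf(E)$ and $\PP$-a.e.\ $X$; taking $g = f\circ\pi_W$ identifies the right-hand side of (\ref{local-general-bis}) with $\E_\PP[\,f\circ\pi_W\mid\mathcal{F}(W_1)\,](X)$. Similarly, $(\pi_W)_*[\PP](\cdot\,|\,X, W_1)$ is a regular version of the conditional law of the push-forward process $(\pi_W)_*\PP$ on $\Conf(W)$ given $r$, so the left-hand side of (\ref{local-general-bis}) equals $\E_{(\pi_W)_*\PP}[\,f\mid\mathcal{G}\,](\pi_W(X))$. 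Finally, the transformation rule for conditional expectations under the Borel map $\pi_W$ gives $\E_{(\pi_W)_*\PP}[\,f\mid\mathcal{G}\,]\circ\pi_W = \E_\PP[\,f\circ\pi_W\mid\pi_W^{-1}(\mathcal{G})\,] = \E_\PP[\,f\circ\pi_W\mid\mathcal{F}(W_1)\,]$ $\PP$-almost surely, so both sides agree $\PP$-a.e., as required.

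The argument is essentially a matter of bookkeeping, and the closest thing to an obstacle is purely measure-theoretic: one must make sure that $\PP(\cdot\,|\,X, W_1)$ and $(\pi_W)_*[\PP](\cdot\,|\,X, W_1)$, as defined through Rohlin disintegration in \S\ref{sec-conditional}, are indeed regular conditional probabilities with respect to $\mathcal{F}(W_1)$ and $\mathcal{G}$ respectively. This is guaranteed here because all configuration spaces in sight are standard Borel and the maps $\pi_W$, $\pi_{W_1}$, $r$ are Borel, so Rohlin's disintegration theorem applies. An equivalent route, if one prefers to avoid the transformation rule, is to verify directly that the family $X\mapsto (\pi_W)_*[\PP(\cdot\,|\,X, W_1)]$ has the two defining properties of the disintegration of $(\pi_W)_*\PP$ over $r$ — namely, that $(\pi_W)_*[\PP(\cdot\,|\,X, W_1)]$ is carried by the fibre $r^{-1}(X\cap W_1)$, and that integrating it against $(\pi_{W_1})_*\PP$ reconstructs $(\pi_W)_*\PP$ — and then to appeal to the almost-sure uniqueness of disintegrations; unwinding the definitions reduces this to the same computation.
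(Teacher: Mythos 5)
Your proof is correct, and its content coincides with the paper's: the paper's own (one-paragraph) argument is precisely the ``equivalent route'' you sketch at the end --- one observes that $\PP=\int\PP(\cdot|X,W_1)\,\PP(\dd X)$ pushes forward to $(\pi_{W_1\cup W_2})_*\PP=\int(\pi_{W_1\cup W_2})_*[\PP(\cdot|X,W_1)]\,\PP(\dd X)$, checks that each $(\pi_{W_1\cup W_2})_*[\PP(\cdot|X,W_1)]$ is carried by the correct fibre of the restriction map, and invokes uniqueness of conditional measures. Your primary route, via the transformation rule for conditional expectations tested against a countable separating family of bounded Borel functions, is an equivalent reformulation of the same bookkeeping.
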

\begin{proof}
First we have
\begin{align*}
\PP = \int_{\Conf(E)}       \PP(\cdot| X, W_1) \PP(\dd X) \an (\pi_{W_1\cup W_2})_{*} [\PP] = \int_{\Conf(E)}    (\pi_{W_1\cup W_2})_{*}  [  \PP(\cdot| X, W_1) ] \PP(\dd X).
\end{align*}
Since $\PP(\cdot | X, W_1)$ is supported on the subset $\{Y \in \Conf(E):   Y \cap W_1 = X \cap W_1\}$, and $(\pi_{W_1\cup W_2} )_{*} [  \PP(\cdot| X, W_1) ]$ is supported on $\{Z\in \Conf(C \cup B):  Z \cap B = X \cap B\}$, by the uniqueness of conditional measures, we get \eqref{local-general-bis}.
\end{proof}
Since $\PP(\cdot| X, W)$ is by definition supported on $\{X\cap W\} \times \Conf(W^c)$, we consider $\PP(\cdot| X, W)$ as a measure on $\Conf(W^c)$. Further identifying $\Conf(W^c)$ with the subset
$
\Conf(E, W^c): =  \{X \in \Conf(E): X \cap W = \emptyset\}
 \subset \Conf(E),
$
when it is necessary, we may also view $\PP(\cdot| X, W)$ as a measure on $\Conf(E)$ supported  on the subset $\Conf(E, W^c)$.

\subsection{Palm measures}\label{sec-palm-cond}

The $n$-th correlation measure $\rho_{n, \PP}$ of a point process $\PP$ on $E$, if it exists, is the unique $\sigma$-finite Borel measure on $E^n$ satisfying
\[
\rho_{n, \PP} (A_1^{k_1} \times  \cdots \times A_j^{k_j})  =  \int_{\Conf(E)}   \prod_{i =1}^j  \frac{ \#(X \cap A_i)!}{( \# (X \cap A_i)  - k_i)! } \dd\PP( X),
\]
for all bounded disjoint Borel subsets $A_1, \cdots, A_j \subset E$ and positive integers $k_1, \cdots, k_j$ with $k_1 + \cdots + k_j = n$.  Here if $ \#(X \cap A_i)<k_i$, we set $\#(X \cap A_i)!/( \#(X \cap A_i) - k_i)! = 0$.

For example, the $n$-th correlation measure of a determinantal process $\PP_K$ is given by
\[
\rho_{n, \PP_{K}}  (\dd x_1 \cdots \dd x_n) = \det(K(x_i, x_j))_{1\le i, j \le n}  \cdot \mu^{\otimes n} (\dd x_1 \cdots \dd x_n),
\]
where $K(x, y)$ is the integral kernel of the operator $K$ satisfying \eqref{kernel-ass}.

Assume that $\PP$ is a simple point process on $E$ such that $\rho_{n, \PP}$ exists for any $n\in\N$. The reduced $n$-th order {\it Campbell measure} $\mathscr{C}_{n, \PP}^{!}$ of $\PP$ is a $\sigma$-finite measure on $E^n \times \Conf(E)$ satisfying
\begin{align*}
\int_{E^n \times \Conf(E)}   F(x, X)  \mathscr{C}_{n, \PP}^{!} (\dd x \times \dd X) =  \int_{\Conf(E)}  \Big[{\sum_{x \in X^n}}\!\!\!^{\#}   F(x,  X\setminus \{x_1, \cdots, x_n\}) \Big] \PP(\dd X),
\end{align*}
for any Borel function $F: E^n \times \Conf(E)\rightarrow\R^{+}$.  Here  ${\sum\!^{\#}}$ is the summation over all {\it ordered $n$-tuples} $(x_1, \cdots, x_n)$ with distinct coordinates $x_1, \cdots, x_n \in X$. Disintegrating $\mathscr{C}_{n, \PP}^{!} (\dd x \times \dd X) $, we obtain
 \begin{align}\label{def-Palm}
\int_{E^n \times \Conf(E)}   F(x, X)  \mathscr{C}_{n, \PP}^{!} (\dd x \times \dd X)  = \int_{E^n} \rho_{n, \PP}(\dd x) \int_{\Conf(E)}   F(x, X)   \PP^{x} ( \dd X),
\end{align}
where the probability measures $\PP^x$ are defined for $\rho_{n, \PP}$-almost every $x\in E^n$ and are called {\it reduced} Palm measures of $\PP$. In what follows, by Palm measures we always mean reduced Palm measures. Since $\PP^{x_1, \cdots, x_n}$ is invariant under permutation of the coordinates in $(x_1, \cdots, x_n)$, we may write
\[
\PP^{X} := \PP^{x_1, \cdots, x_n}, \quad \text{if $X  = \{x_1, \cdots, x_n \}$.}
\]

\subsection{Determinantal point processes, conditioning on bounded subsets}\label{sec-DPP-cond}
Let $W\subset E$ be a Borel subset. Recall that, by definition, the push-forward $(\pi_W)_{*}(\PP_K)$ is a determinantal point process on $W$, induced by a correlation kernel $\chi_{W} K \chi_{W}$.
We next recall, for  determinantal point processes, the form of conditional measures with respect to restricting the configuration on a bounded subset.

Recall that $\Conf(W)$ is identified as a subset $\{X \in \Conf(E): X \subset W\}$ of $\Conf(E)$.  Given a point process $\PP$ on $E$, that is, a Borel probability on $\Conf(E)$, we set
\begin{align}\label{nor-res}
\overline{\PP\restriction_{\Conf(W)}}: = \left\{ \begin{array}{cc} \displaystyle \frac{\PP\restriction_{\Conf(W)}}{\PP(\Conf(W))}, &\text{ if $\PP(\Conf(W)) >0$}
\\
0, &\text{ if $\PP(\Conf(W)) =0$}
\end{array}
\right..
\end{align}
Let $B\subset E$ be a bounded Borel subset.   If $\PP_K(\Conf(B^c))>0$, then, by \cite[Proposition 2.1]{Buf-inf},
 $\overline{\PP_K\restriction_{\Conf(B^c)}}$ is a determinantal point process on $B^c$ induced by the correlation kernel $\chi_{B^c} K ( 1 - \chi_B K)^{-1}  \chi_{B^c}$; in the discrete setting, see also Borodin and Rains \cite{BorRains}, Lyons \cite{DPP-L}. The reader is also referred to \cite{BQ-JFA} for conditional measures of generalized Ginibre point processes.  Next,  By a Theorem of Shirai and Takahashi \cite[Theorem 1.7]{ST-palm},  for $\PP_K$-almost every $X \in \Conf(E)$, the Palm measure $\PP_K^{X\cap B}$ is a determinantal point process on $E$,  induced by the  correlation kernel
\[
K^{X\cap B} = K^{p_1, \cdots, p_n}, \quad \text{if $X \cap B = \{p_1, \cdots, p_n\}$} ;
\]
Summing up, we obtain
\begin{proposition}\label{prop-DPP}
  $\PP_K(\cdot|X, B)$ is a determinantal point process on $B^c$  for $\PP_K$-almost every $X \in \Conf(E)$,  induced by a correlation kernel $K^{[X, \, B]}$ defined in \eqref{def-cond-kernel}.
\end{proposition}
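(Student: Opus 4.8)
The plan is to realise the conditioning on the event $\{X\cap B$ fixed$\}$ as a two–step procedure — first pass to the reduced Palm measure at the points of $X\cap B$, then condition the resulting process to have no particle in $B$ — and then invoke for each step the structural results recalled just above: the Palm-measure formula of Shirai–Takahashi for step one, and \cite[Proposition 2.1]{Buf-inf} on induced processes for step two.

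First I would record the general point-process identity
\[
\PP_K(\cdot \mid X, B) \;=\; \overline{\bigl(\PP_K^{\,X\cap B}\bigr)\restriction_{\Conf(B^c)}}, \qquad \text{for } \PP_K\text{-a.e. } X,
\]
where on the left the conditional measure is viewed as a measure on $\Conf(B^c)$ and $\PP_K^{\,X\cap B}$ is the reduced Palm measure. Since $\mu(E\setminus E_0)=0$ forces $\rho_{1,\PP_K}(E\setminus E_0)=0$, for a.e.\ $X$ the set $X\cap B$ is a finite simple subset of $E_0$ (finiteness using that $B$ is bounded and $K$ locally trace class), so the Palm measure along that fibre is well defined. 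To prove the displayed identity I would test the defining relation of the reduced $l$-th order Campbell measure and its disintegration \eqref{def-Palm} against functions $F(x_1,\dots,x_l; X) = g(x_1,\dots,x_l)\,\ch_{\{X\cap B=\emptyset\}}\,\Phi(X)$ with $g$ symmetric and supported on $B^l$: on the configuration side, requiring that after deleting an ordered $l$-tuple no particle of $X$ remains in $B$ pins that tuple down to an enumeration of $X\cap B$ (in particular forcing $\#_B(X)=l$), so the sum collapses to $l!\,g(X\cap B)\,\Phi(X\cap B^c)\,\ch_{\{\#_B(X)=l\}}$. Disintegrating the other side via \eqref{def-Palm} and matching then identifies the Rohlin disintegration of $\PP_K$ along $\pi_B$ with the normalised restrictions of the Palm measures, the $\pi_B$-marginal having, on the $l$-particle stratum, $(\pi_B)_\ast\PP_K$-density proportional to $\rho_{l,\PP_K}(\dd p)\cdot\PP_K^{\,p}(\Conf(B^c))$. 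Because the left-hand side is a genuine probability measure for a.e.\ $X$, so is the right-hand side, whence $\PP_K^{\,X\cap B}(\Conf(B^c))>0$ for a.e.\ $X$; equivalently, by the inequalities \eqref{both-side-c}–\eqref{one-side-c} quoted above, $1-\chi_B K^{p_1,\dots,p_l}$ is invertible for a.e.\ $X$.

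It then remains only to compute the kernel of the right-hand side, which is pure citation. By Shirai–Takahashi \cite[Theorem 1.7, Corollary 6.6]{ST-palm}, $\PP_K^{\,X\cap B} = \PP_{K^{p_1,\dots,p_l}}$ when $X\cap B=\{p_1,\dots,p_l\}$, with $K^{p_1,\dots,p_l}$ self-adjoint, locally trace class and a positive contraction. Applying \cite[Proposition 2.1]{Buf-inf} to this determinantal process and the bounded set $B$ — legitimate since $\PP_{K^{p_1,\dots,p_l}}(\Conf(B^c))>0$ for a.e.\ $X$ as just shown — gives that $\overline{\PP_{K^{p_1,\dots,p_l}}\restriction_{\Conf(B^c)}}$ is the determinantal process on $B^c$ with correlation kernel $\chi_{B^c}K^{p_1,\dots,p_l}\bigl(1-\chi_B K^{p_1,\dots,p_l}\bigr)^{-1}\chi_{B^c}$, which is exactly the first branch of \eqref{def-cond-kernel}, namely $K^{[X,\,B]}$; the second branch occurs only on the $\PP_K$-null set where invertibility fails. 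Chaining the two steps yields $\PP_K(\cdot\mid X,B)=\PP_{K^{[X,\,B]}}$ for $\PP_K$-a.e.\ $X$. I expect the only real work to be in the second paragraph — the bookkeeping identifying the Rohlin conditional measures along fibres of $\pi_B$ with normalised restrictions of reduced Palm measures, together with the attendant null-set and positivity points — while the last paragraph is an immediate appeal to the cited results.
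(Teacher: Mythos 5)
Your proposal is correct and follows essentially the same route as the paper: the identity $\PP_K(\cdot\mid X,B)=\overline{\PP_K^{\,X\cap B}\restriction_{\Conf(B^c)}}$ is exactly the paper's Proposition \ref{prop-bdd-cond}, proved there by the same Campbell-measure disintegration you describe (testing against functions supported on $\Conf_l(B)\times\Conf(B^c)$ so the ordered sum collapses to $l!\,\chi_{\{\#_B(X)=l\}}$ times the restricted integrand), after which the kernel identification is the same appeal to Shirai--Takahashi for the Palm measure and to \cite[Proposition 2.1]{Buf-inf} for the induced process. Your additional observation that almost-sure positivity of $\PP_K^{\,X\cap B}(\Conf(B^c))$, hence invertibility of $1-\chi_B K^{p_1,\dots,p_l}$, comes for free from the disintegration is precisely the paper's \eqref{non-vanish-proba}.
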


\begin{proof}
Indeed, by Proposition \ref{prop-bdd-cond} in the Appendix  below,  for $\PP_K$-almost every $X \in \Conf(E)$, we have
\[
 \PP_K(\cdot | X, B) = \overline{\PP_K^{X\cap B}\restriction_{\Conf(B^c)}} = \PP_{K^{[X, \, B]}}.
 \]
\end{proof}

\begin{proposition}
If  $K$ is the orthogonal projection onto a closed subspace $H \subset L^2(E, \mu)$, then  the kernel $K^{p_1, \cdots, p_n}$ corresponds to the orthogonal projection from $L^2(E, \mu)$ onto the subspace
\[
H(p_1, \cdots, p_n):  = \{h \in H: h(p_1) = \cdots = h(p_n) = 0 \}.
\]
Moreover, for a bounded Borel subset $B\subset E$, the operator $K^{[X, \, B]}$ is the orthogonal projection onto the closure of the subspace
\[
\chi_{E\setminus B} H(X \cap B) = \{\chi_{E\setminus B} h: h \in H(X\cap B)\}.
\]
\end{proposition}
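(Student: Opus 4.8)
The plan is to reduce both assertions to elementary reproducing-kernel Hilbert space geometry, using the reproducing identity $h(p)=\langle h,K_p\rangle$ for $h\in H$, $p\in E_0$, which holds for the realizations we have fixed. The first thing I would record is that $K_p\in H$ for every $p\in E_0$: since $K_p\in L^2(E,\mu)$ and $K^2=K$, the fixed Borel realization \eqref{kernel-ass} gives $KK_p=K_p$, hence $K_p\in\Ran(K)=H$. In particular, for points $p_1,\dots,p_n\in E_0$ the span $V:=\spann\{K_{p_1},\dots,K_{p_n}\}$ is a finite-dimensional subspace of $H$.

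\emph{First assertion.} I would work in the non-degenerate range $\Det[K(p_i,p_j)]_{1\le i,j\le n}>0$, i.e.\ $K_{p_1},\dots,K_{p_n}$ linearly independent; on the complementary set, which is $\PP_K$-negligible, $K^{p_1,\dots,p_n}$ is $0$ by Definition \ref{def-canonical-kernel}. Applying the Schur-complement formula to the $(n+1)\times(n+1)$ determinant in the numerator of \eqref{def-Palm-kernel}, with the invertible lower-right block $G:=[K(p_i,p_j)]_{1\le i,j\le n}$ (the Hermitian positive-definite Gram matrix of $K_{p_1},\dots,K_{p_n}$, since $K(p_i,p_j)=\langle K_{p_j},K_{p_i}\rangle$), one obtains
\[
K^{p_1,\dots,p_n}(x,y)=K(x,y)-\sum_{1\le i,j\le n}K(x,p_i)\,(G^{-1})_{ij}\,K(p_j,y),
\]
which is exactly the integral kernel of $K-\Pi$, where $\Pi$ is the orthogonal projection of $L^2(E,\mu)$ onto $V$. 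Since $V\subset H=\Ran(K)$ and $K$ is the orthogonal projection onto $H$, the operator $K-\Pi$ is the orthogonal projection onto $H\ominus V$; and $H\ominus V=\{h\in H:\langle h,K_{p_i}\rangle=0,\ 1\le i\le n\}=\{h\in H: h(p_i)=0,\ 1\le i\le n\}=H(p_1,\dots,p_n)$ by the reproducing identity. (Alternatively one may induct on $n$, using that for $K(p,p)>0$ the operator $K^p$ is the orthogonal projection onto $H\ominus\C K_p=H(p)$ together with $K^{p_1,\dots,p_n}=(K^{p_1,\dots,p_{n-1}})^{p_n}$.)

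\emph{Second assertion.} Fix $X$ with $X\cap B=\{p_1,\dots,p_l\}\subset E_0$ and with $1-\chi_B L$ invertible, where $L:=K^{p_1,\dots,p_l}$; the complementary set of $X$ is again $\PP_K$-negligible and there $K^{[X,\,B]}=0$. By the first assertion $L$ is the orthogonal projection onto $H':=H(X\cap B)$. Decomposing $L^2(E,\mu)=L^2(B,\mu)\oplus L^2(E\setminus B,\mu)$ and writing $L$ in $2\times2$ block form with blocks $L_{11}=\chi_B L\chi_B$, $L_{12}=\chi_B L\chi_{E\setminus B}$, $L_{21}=\chi_{E\setminus B}L\chi_B$, $L_{22}=\chi_{E\setminus B}L\chi_{E\setminus B}$, the operator $1-\chi_B L$ is block upper triangular, so that inverting it blockwise and using \eqref{def-cond-kernel} gives
\[
K^{[X,\,B]}=L_{22}+L_{21}(1-L_{11})^{-1}L_{12}=:M,
\]
an operator acting in $L^2(E\setminus B,\mu)$ (extended by zero elsewhere); note that $1-L_{11}$ is invertible precisely because $1-\chi_B L$ is. I would then verify that $M$ is an orthogonal projection: self-adjointness is immediate from $L^{*}=L$, and $M^2=M$ follows after a short manipulation from the block identities contained in $L^2=L$, e.g.\ $L_{22}^{2}=L_{22}-L_{21}L_{12}$, $L_{22}L_{21}=L_{21}(1-L_{11})$, $L_{12}L_{21}=L_{11}(1-L_{11})$. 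Finally I would identify its range: if $h\in H'$ then $Lh=h$ forces $\chi_B h=(1-L_{11})^{-1}L_{12}\,\chi_{E\setminus B}h$, whence $M(\chi_{E\setminus B}h)=\chi_{E\setminus B}h$, so $\overline{\chi_{E\setminus B}H'}\subset\Ran(M)$ because $\Ran(M)$ is closed; conversely, if $g\in\Ran(M)$ is orthogonal to $\chi_{E\setminus B}H'$ in $L^2(E\setminus B,\mu)$, then its extension by $0$ is orthogonal to $H'$, i.e.\ $Lg=0$, hence $L_{12}g=L_{22}g=0$ and $g=Mg=0$. Therefore $K^{[X,\,B]}=M$ is the orthogonal projection onto $\overline{\chi_{E\setminus B}H(X\cap B)}$.

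The only genuinely computational point is the verification $M^2=M$, a finite block-operator manipulation driven by $L^2=L$; apart from that, the one item requiring care is the bookkeeping for the degenerate configurations ($\Det[K(p_i,p_j)]_{1\le i,j\le n}=0$, or $1-\chi_B L$ not invertible), which form a $\PP_K$-null set and on which the canonical kernels are set to $0$ by Definition \ref{def-canonical-kernel}.
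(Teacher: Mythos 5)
Your proof is correct, and for both assertions it takes a more self-contained route than the paper. The paper proves the first assertion by induction on $n$ from the one-point identity $K^{p}=K-\Pi_{K_{p}}$, and disposes of the second by citing \cite[Proposition 2.5]{BQS}. For the first assertion your single Schur-complement evaluation of the determinant ratio in \eqref{def-Palm-kernel}, exhibiting $K^{p_1,\dots,p_n}$ directly as $K-\Pi_V$ with $\Pi_V$ the orthogonal projection onto $\spann\{K_{p_1},\dots,K_{p_n}\}$, is an equally short and arguably more transparent alternative to the induction (which you also record). The substantive difference is in the second assertion: your $2\times 2$ block computation $\chi_{E\setminus B}L(1-\chi_B L)^{-1}\chi_{E\setminus B}=L_{22}+L_{21}(1-L_{11})^{-1}L_{12}$, the verification of idempotence from the block identities encoded in $L^2=L$, and the two-sided identification of the range reproduce, inside the paper, exactly the content the authors outsource to \cite{BQS}; this costs half a page of operator algebra but removes the external dependency. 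Your explicit bookkeeping of the degenerate configurations (vanishing Gram determinant, non-invertible $1-\chi_B L$) as a $\PP_K$-null set on which the canonical kernels are declared to vanish is also more careful than the paper, which leaves this implicit. The one point to flag is the opening claim $KK_p=K_p$: it relies on the standing convention that the fixed Borel realization of the kernel of a projection satisfies $K(x,y)=\langle K_y,K_x\rangle$ on $E_0\times E_0$, a convention the paper itself invokes tacitly when writing $h(x)=\langle h,K_x\rangle$; your one-line justification is consistent with the setup and worth keeping.
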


\begin{proof}
The first assertion can be proved by induction on $n$, by noting that $ K^{p_1, \cdots, p_n} = ((K^{p_1})^{\cdots})^{p_n}$.  In particular, when $n = 1$, the equality $K^{p_1}(x, y) = K (x, y)  -\frac{K(x, p_1) K(p_1, y)}{ K(p_1, p_1)}$ implies that $K^{p_1} = K - \Pi_{K_{p_1}}$ where $\Pi_{K_{p_1}}$ is the one-rank orthogonal projection onto the linear space spanned by the function $K_{p_1}(\cdot) = K(\cdot, p_1)$. Therefore, $K^{p_1}$ is the orthogonal projection onto $H(p_1)$.

 The second assertion is an immediate consequence of \cite[Proposition 2.5]{BQS}
\end{proof}

\section{The local property: proof of Lemmata \ref{main-local}, \ref{main-local-bis}.}

\subsection{Proof of Lemma \ref{main-local}.}

Let $B\subset E$ be a  bounded Borel subset and  let  $Q: L^2(E, \mu) \rightarrow L^2(E,\mu)$ be  an orthogonal projection whose range satisfies $\Ran(Q) \subset L^2(E\setminus B, \mu)$ and such that $QKQ$ is
locally trace-class.  Introduce a positive contractive locally trace-class operator $R$ by the formula
\begin{align}\label{def-R}
R = R(K, B, Q) : =(Q+\chi_B)K(Q+\chi_B).
\end{align}

Recall that from the introduction, we fixed a Borel subset $E_0 \subset E$, such that  $\mu(E \setminus E_0) =0$ and the kernel $K(x, y)$ is well-defined on $E_0\times E_0$.  Recall also the notation introduced in Definition \ref{def-canonical-kernel}.
 \begin{lemma}\label{lem-palm}
Let $R$ be the operator introduced in \eqref{def-R}.  For any $p\in B \cap E_0$, we have $R^p= (Q+\chi_B)  K^p  (Q+\chi_B)$.
More generally, for  $(p_1, \cdots, p_n)\in (B\cap E_0)^n$, we have
\[
R^{p_1, \cdots, p_n}= (Q+\chi_B)  K^{p_1, \cdots, p_n}  (Q+\chi_B) .
\]
In particular,
\[
R^{X\cap B} = (Q + \chi_B) K^{X\cap B} (Q +\chi_B),\quad \text{for $\PP_K$-almost every $X\in \Conf(E)$.}
\]
\end{lemma}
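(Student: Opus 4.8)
\textit{The plan} is to derive the claimed operator identity from the Leibniz expansion of the determinant in the Palm‑kernel formula \eqref{def-Palm-kernel}, after first recording how conjugation by the projection $P:=Q+\chi_B$ acts on $K$ over the set $B$. Since $\Ran(Q)\subset L^2(E\setminus B,\mu)$ and $Q$ is an orthogonal projection, $Q\chi_B=\chi_B Q=0$, so $P=Q+\chi_B$ is again an orthogonal projection with $\chi_B P=P\chi_B=\chi_B$. Consequently the operator $R$ of \eqref{def-R} satisfies, as operators,
\[
\chi_B R\chi_B=\chi_B K\chi_B,\qquad \chi_B R=\chi_B K P,\qquad R\chi_B=P K\chi_B .
\]
Writing $R=\chi_B K\chi_B+\chi_B KQ+QK\chi_B+QKQ$, each of the four pieces is locally trace class (hence an integral operator), since $R=PKP$ is; likewise $PK$, $KP$, $PKP$ are locally Hilbert--Schmidt (for bounded $A$, $PK\chi_A=(PK^{1/2})(K^{1/2}\chi_A)$ with $K^{1/2}\chi_A$ Hilbert--Schmidt), hence integral operators. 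I fix the Borel realization of the kernel of $R$ on $E_0\times E_0$ by gluing the fixed realizations of the four pieces, so that for $p,q\in B\cap E_0$ and $x,y\in E_0$ one has the pointwise identities $R(p,q)=K(p,q)$, $R(x,p)=(PK)(x,p)$, $R(p,y)=(KP)(p,y)$, $R(x,y)=(PKP)(x,y)$; with an arbitrary realization of $R$ these hold $\mu$‑a.e., which also suffices below.

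\textit{The main computation.} Fix $(p_1,\dots,p_n)\in(B\cap E_0)^n$. By the block identities $\Det[R(p_i,p_j)]_{1\le i,j\le n}=\Det[K(p_i,p_j)]_{1\le i,j\le n}$, so $R^{p_1,\dots,p_n}$ and $K^{p_1,\dots,p_n}$ are given by the same branch of \eqref{def-Palm-kernel}; in the degenerate branch the identity is $0=0$, so assume we are in the nondegenerate branch. Put $p_0=x$, $q_0=y$. By the block identities the numerator $\Det[R(p_i,q_j)]_{0\le i,j\le n}$ is the determinant of the matrix with $(0,0)$‑entry $(PKP)(x,y)$, $(0,j)$‑entry $(PK)(x,p_j)$, $(i,0)$‑entry $(KP)(p_i,y)$, and lower‑right block $[K(p_i,p_j)]_{1\le i,j\le n}$. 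On the other hand the numerator of $K^{p_1,\dots,p_n}(u,v)$ is $\Det[M_{ij}]_{0\le i,j\le n}$ with $M_{00}=K(u,v)$, $M_{0j}=K(u,p_j)$, $M_{i0}=K(p_i,v)$, $M_{ij}=K(p_i,p_j)$ for $i,j\ge1$. Expanding this by the Leibniz formula, in every term $u$ appears in exactly one factor (a first‑row entry) and $v$ in exactly one factor (a first‑column entry), all remaining factors being $K(p_i,p_j)$'s; hence composing with $P$ on the left (in $x$) and on the right (in $y$) — legitimate because $PK$, $KP$, $PKP$ are integral operators, so one just tests the asserted identity against $f,g\in L^2(E,\mu)$ — acts termwise, sending $K(u,v)\mapsto (PKP)(x,y)$, $K(u,p_j)\mapsto (PK)(x,p_j)$, $K(p_i,v)\mapsto (KP)(p_i,y)$ and leaving the $K(p_i,p_j)$ untouched. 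Term by term this reproduces the Leibniz expansion of $\Det[R(p_i,q_j)]_{0\le i,j\le n}$, so after dividing by the common denominator $\Det[K(p_i,p_j)]_{1\le i,j\le n}$ we get $R^{p_1,\dots,p_n}(x,y)=(PK^{p_1,\dots,p_n}P)(x,y)$, i.e. $R^{p_1,\dots,p_n}=(Q+\chi_B)K^{p_1,\dots,p_n}(Q+\chi_B)$; the case $n=1$ is the special case just obtained. Alternatively, the general $n$ follows by induction on $n$ from the case $n=1$, using $K^{p_1,\dots,p_n}=(K^{p_1,\dots,p_{n-1}})^{p_n}$ from Definition \ref{def-canonical-kernel} together with $0\le QK^{p_1,\dots,p_{n-1}}Q\le QKQ$, which permits reapplying the $n=1$ case with $K^{p_1,\dots,p_{n-1}}$ in place of $K$.

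\textit{The almost-sure statement.} For $\PP_K$‑almost every $X$ the set $X\cap B$ is a finite subset $\{p_1,\dots,p_l\}$ of $E_0$ with $\Det[K(p_i,p_j)]_{1\le i,j\le l}>0$ (cf.\ the remark after Definition \ref{def-canonical-kernel}), so the computation above applied to the $l$‑tuple of points of $X\cap B$ gives $R^{X\cap B}=(Q+\chi_B)K^{X\cap B}(Q+\chi_B)$. I expect the only genuinely delicate part to be the bookkeeping of Step 1 — checking that $R$, $PK$, $KP$, $PKP$ really are integral operators and that a single Borel realization of the kernel of $R$ can be chosen making the four block identities hold on $E_0\times E_0$ (or, more cheaply, only $\mu$‑a.e., which is all that Step 3 needs); the remaining content is the elementary multilinear identity in the main computation.
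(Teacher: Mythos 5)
Your proposal is correct. The essential observation is the same as in the paper's proof: writing $P=Q+\chi_B$, the operator $R=PKP$ satisfies $R(p,q)=K(p,q)$ for $p,q\in B\cap E_0$ and $R(\cdot,p)=P[K(\cdot,p)]$, $R(p,\cdot)=\overline{P[K(\cdot,p)]}$ for $p\in B\cap E_0$ (your ``block identities'' are exactly the paper's identity \eqref{R-vector}, which the paper derives by expanding $Q=\sum_i\varphi_i\otimes\overline{\varphi_i}$ in an orthonormal basis of $\Ran(Q)$). Where you diverge is in how the general-$n$ identity is extracted from these facts: the paper proves only the $n=1$ case, by the one-line rank-one-update computation $R^p=R-R(\cdot,p)\otimes\overline{R(\cdot,p)}/R(p,p)=PK^pP$, and then invokes induction on $n$ through the iterated definition $K^{p_1,\dots,p_n}=(\cdots(K^{p_1})^{p_2}\cdots)^{p_n}$; you instead work directly with the closed-form Palm determinant \eqref{def-Palm-kernel}, noting that the denominators for $R$ and $K$ coincide and that conjugation by $P$ acts termwise on the Leibniz expansion of the numerator (each term containing $x$ and $y$ in exactly one first-row and one first-column factor). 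Your route gets all $n$ at once but pays with the Leibniz bookkeeping and leans on the determinant formula being the operative realization of the iterated kernel; the paper's route needs only the rank-one computation and the recursive structure. You also record the inductive route as an alternative, correctly noting that $0\le QK^{p_1,\dots,p_{n-1}}Q\le QKQ$ keeps the hypotheses in force, so that part coincides with the paper's argument. Your care about fixing a single Borel realization of the kernel of $R$ so that the block identities hold at the specific points of $B\cap E_0$ (rather than merely $\mu$-a.e.) is warranted and is at the same level of rigour as the paper's own treatment.
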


\begin{proof}
Take an orthonormal basis $\varphi_i$ of the range $\Ran(Q) \subset L^2(E\setminus B, \mu)$ of $Q$ and write
\[
Q = \sum_{i\in \N} \varphi_i \otimes \overline{\varphi_i}.
\]
We may assume that the values $\varphi_i(x)$ are well-defined for any index $i \in \N$ and any $x\in E_0$.  Observe that for any $p\in B\cap E_0$, we have
\begin{align}\label{R-vector}
R(\cdot, p) =  (Q + \chi_B) [K(\cdot, p)].
\end{align}
Indeed, write
\[
R = (\sum_{i\in \N}\varphi_{i} \otimes \overline{\varphi_i} ) K (\sum_{j\in \N} \varphi_j \otimes \overline{\varphi_j}) + (\sum_{i\in \N}\varphi_{i} \otimes \overline{\varphi_i} ) K \chi_B + \chi_B K (\sum_{j\in \N} \varphi_j \otimes \overline{\varphi_j})  +\chi_B K \chi_B,
\]
since $p\in B\cap E_0$, we get for any $x \in E_0$:
\begin{align*}
R(x, p)  &=   \sum_{i\in \N}\varphi_{i}(x) \int_{E}\overline{\varphi_i(y)}  K(y, p)  \mu(\dd y)+\chi_B(x)  K(x, p)
\\
& = \sum_{i\in \N} \varphi_i(x) \langle  K(\cdot, p), \varphi_i \rangle + \chi_B(x)  K(x, p),
\end{align*}
which is equivalent to  \eqref{R-vector}.  Since $R(p, p) =  K(p, p)$, we have
\begin{align*}
R^p & = R  - \frac{R(\cdot, p) \otimes \overline{R(\cdot, p)}}{R(p, p)}  = (Q+\chi_B)K(Q+\chi_B) -    \frac{(Q + \chi_B) [K(\cdot, p)] \otimes \overline{(Q + \chi_B) [K(\cdot, p)] }}{ K(p, p)}
\\
& = (Q+\chi_B) \Big[ K - \frac{K(\cdot, p) \otimes \overline{K(\cdot, p)}}{K(p,p)}\Big]   (Q+\chi_B)  =  (Q+\chi_B)  K^p  (Q+\chi_B).
\end{align*}
The formula for $R^{p_1, \cdots, p_n}$ follows immediately by  induction on $n$.
\end{proof}

Recall that, by our discussion in \S \ref{sec-DPP-cond}, the  kernel
$
\chi_{E\setminus B}K(1-\chi_B K)^{-1}\chi_{E\setminus B}
$
is a correlation kernel for the determinantal point process $\overline{\PP_K\restriction_{\Conf(B^c)}}$, provided that $\PP_K(\Conf(B^c))>0$.

\begin{lemma}\label{lem-res}
Let $B$ be a bounded Borel subset of $E$ such that $\PP_K(\#_B = 0) >0$. Let $R$ be the operator introduced in \eqref{def-R}.  Then
\begin{align}\label{R-K}
\chi_{E\setminus B}R(1-\chi_B R)^{-1}\chi_{E\setminus B}=Q\left(\chi_{E\setminus B}K(1-\chi_B K)^{-1}\chi_{E\setminus B}\right)Q.
\end{align}
\end{lemma}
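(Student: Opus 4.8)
The plan is to reduce the identity \eqref{R-K} to the algebraic identity for $R^p$ obtained in Lemma~\ref{lem-palm}, but it is cleaner to argue directly with the resolvent. First I would record the basic structural facts about $R = (Q+\chi_B)K(Q+\chi_B)$: since $\Ran(Q)\subset L^2(E\setminus B,\mu)$, the operators $Q$ and $\chi_B$ are orthogonal projections with orthogonal ranges, so $P := Q+\chi_B$ is itself an orthogonal projection, $R = PKP$ is a positive contraction with $\Ran(R)\subset \Ran(P)$, and $R = PRP$. Because $\PP_K(\#_B=0)>0$, the operator $1-\chi_B K$ is invertible (this is exactly the condition under which $\overline{\PP_K\restriction_{\Conf(B^c)}}$ is well-defined with the stated kernel, cf.\ \S\ref{sec-DPP-cond}); I would first check that $1-\chi_B R$ is invertible as well. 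Indeed $\chi_B R\chi_B = \chi_B K\chi_B$ since $\chi_B P = \chi_B$, and $\|\chi_B R\|= \|\chi_B R\chi_B\|^{1/2}\le \|\chi_B K\chi_B\|^{1/2}<1$ because invertibility of $1-\chi_B K$ forces $\|\chi_B K\chi_B\|<1$ (the positive contraction $\chi_B K\chi_B$ cannot have $1$ in its spectrum). Hence $(1-\chi_B R)^{-1}=\sum_{n\ge 0}(\chi_B R)^n$ converges in operator norm.

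Next I would compute $\chi_{E\setminus B} R(1-\chi_B R)^{-1}\chi_{E\setminus B}$ by expanding the Neumann series:
\[
\chi_{E\setminus B} R(1-\chi_B R)^{-1}\chi_{E\setminus B} \;=\; \sum_{n=0}^\infty \chi_{E\setminus B} R(\chi_B R)^n \chi_{E\setminus B}.
\]
Now I insert $P = Q + \chi_B$ everywhere $R$ appears. The key point is that flanking projections can be replaced by $Q$: on the left $\chi_{E\setminus B} R = \chi_{E\setminus B} P K P = \chi_{E\setminus B}(Q+\chi_B)KP = Q K P$ (using $\chi_{E\setminus B}\chi_B = 0$ and $\chi_{E\setminus B}Q = Q$, the latter since $\Ran Q\subset L^2(E\setminus B)$), and symmetrically on the right $R\chi_{E\setminus B} = PKQ$. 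In each interior factor $\chi_B R \chi_B = \chi_B K \chi_B$ as noted, while each "junction" $R = PKP = (Q+\chi_B)K(Q+\chi_B)$ contributes the $Q$-corner $QKQ$ only when both neighbours supply a $Q$, and the $\chi_B$-corners otherwise. Carrying this bookkeeping out term by term, I would show the whole series telescopes into
\[
Q\Big(\sum_{n=0}^\infty K(\chi_B K)^n\Big)Q \;=\; Q\,\chi_{E\setminus B}K(1-\chi_B K)^{-1}\chi_{E\setminus B}\,Q,
\]
which is \eqref{R-K}. A slicker route to the same conclusion, which I would probably prefer to write up, is to establish the resolvent identity $(1-\chi_B R)^{-1}R = P(1-\chi_B K)^{-1}K P$ after compression: multiply both sides of the trivial identity $R(1-\chi_B K)= PKP - PK\chi_B K P \cdot(\text{corrections})$ — more precisely, verify $(1-\chi_B R)\,P(1-\chi_B K)^{-1}KP\,= RP = R$ by using $\chi_B R = \chi_B K P$ and $P(1-\chi_B K)^{-1}KP$ being supported in $\Ran P$, then compress by $\chi_{E\setminus B}$ on both sides and use $\chi_{E\setminus B}P(\cdots)P\chi_{E\setminus B} = Q(\cdots)Q$ restricted appropriately.

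The main obstacle is purely the combinatorial bookkeeping in the series expansion: making sure that every occurrence of $R$ at an "interface" between two $\chi_B$-blocks correctly produces the cross terms $\chi_B K Q$ and $Q K \chi_B$ and that these telescope away, leaving only the alternating pattern $Q K \chi_B K \chi_B \cdots K Q$, i.e.\ $Q K(\chi_B K)^n Q$. This is where the hypothesis $\Ran Q\subset L^2(E\setminus B,\mu)$ — equivalently $Q\chi_B = \chi_B Q = 0$ and $Q\chi_{E\setminus B}=Q$ — is used repeatedly, and it is the reason the identity holds with the single projection $Q$ rather than all of $\chi_{E\setminus B}$. Convergence of all series is guaranteed by the norm bound $\|\chi_B K\chi_B\|<1$ established at the outset, so no subtlety of that kind arises; and the local trace-class hypothesis on $QKQ$ is not needed for \eqref{R-K} itself but only to guarantee that the objects $R^{[X,B]}$ and $QK^{[X,B]}Q$ appearing in Lemma~\ref{main-local} are of the right class.
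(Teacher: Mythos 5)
Your proposal is correct and follows essentially the same route as the paper's proof: strict contractivity of $\chi_B K$ and $\chi_B R$ deduced from $\det(1-\chi_B K\chi_B)=\PP_K(\#_B=0)>0$, followed by a term-by-term comparison of the Neumann series using $\chi_B R\chi_B=\chi_B K\chi_B$ and $\chi_{E\setminus B}R=QK(Q+\chi_B)$. The only slip is the asserted equality $\|\chi_B R\|=\|\chi_B R\chi_B\|^{1/2}$, which should be the inequality $\|\chi_B R\|\le\|\chi_B R^{1/2}\|\,\|R^{1/2}\|\le\|\chi_B R\chi_B\|^{1/2}$ (exactly as in \eqref{both-side-c}--\eqref{one-side-c}); this does not affect the argument.
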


\begin{proof}
The gap probability $\PP_K ( \#_B= 0)$ is given by
\begin{align}\label{gap-pb}
\PP_K ( \#_B= 0) = \PP_K(\{X: X\cap B = \emptyset\})  = \det (1 - \chi_B K \chi_B) >0.
\end{align}
It follows that $1 - \chi_B K \chi_B$ is invertible and hence $1$ is not an eigenvalue of $ \chi_B K \chi_B$. But since $\chi_B K \chi_B$ is a priori a positive contraction and $\chi_B K \chi_B$ is compact, its norm coincides with its maximal eigenvalue. Hence $\chi_B K \chi_B$ is strictly contractive. But we also have
\begin{align}\label{both-side-c}
\|  \chi_B K \chi_B \|  = \| ( \chi_B K^{1/2}  )  (  \chi_B K^{1/2})^* \| =  \|\chi_B K^{1/2}\|^2  < 1.
\end{align}
Hence
\begin{align}\label{one-side-c}
\| \chi_B K\| \le \| \chi_B K^{1/2}\| \|K^{1/2}\| < 1.
\end{align}
Therefore, both $\chi_B K$  and $ \chi_B R = \chi_BK(Q+\chi_B)$ are strictly contractive.  In particular, the operators on both the left hand side and the right hand side of \eqref{R-K} are well-defined.

Since $Q$ commutes with $\chi_{E\setminus B}$,  we have
\begin{align*}
\chi_{E\setminus B} R \chi_{E \setminus B}   = Q  \chi_{E\setminus B} K    \chi_{E\setminus B} Q \an \chi_{E\setminus B} R \chi_B  = Q \chi_{E\setminus B} K \chi_B.
\end{align*}
Since $\chi_B R \chi_B = \chi_B K \chi_B$, for  $n\ge 1$, we have
\begin{multline}
 \chi_{E\setminus B} R (\chi_B R)^n  \chi_{E\setminus B}
=  \chi_{E\setminus B} R  (\chi_B R) \cdots (\chi_B R)  \chi_{E\setminus B}
 =  \chi_{E\setminus B} R \chi_B  (\chi_B R \chi_B)^{n-1} \chi_B R\chi_{E\setminus B}
 =\\=Q \chi_{E\setminus B} K  \chi_B  (\chi_B K \chi_B)^{n-1} \chi_B K \chi_{E\setminus B}  Q
 = Q \chi_{E\setminus B} K (\chi_BK)^n \chi_{E\setminus B} Q.
\end{multline}
Now since $\chi_B R$ and $\chi_B K$ are both strictly contractive,  we finally write
\begin{multline}
\chi_{E\setminus B}R ( 1 - \chi_B R)^{-1} \chi_{E\setminus B} =  \sum_{n=0}^\infty \chi_{E\setminus B} R (\chi_B R)^n \chi_{E\setminus B}=
\\= \sum_{n=0}^\infty  Q \chi_{E\setminus B} K (\chi_B K)^n \chi_{E\setminus B}Q =Q  \chi_{E\setminus B}K ( 1 - \chi_B K)^{-1} \chi_{E\setminus B} Q.
\end{multline}
\end{proof}

\begin{proof}[ Conclusion of the proof of Lemma \ref{main-local}]
By Proposition \ref{prop-bdd-cond} and Proposition \ref{prop-DPP},
\[
\PP_K(\cdot | X, B) = \overline{(\PP_K)^{X\cap B}\restriction_{\Conf(B^c)}} =  \overline{\PP_{K^{X\cap B}}\restriction_{\Conf(B^c)}}, \quad \text{for $\PP$-almost every $X \in \Conf(E)$}.
\]
By definition \eqref{nor-res} of the normalized restriction measure $\overline{\PP_{K^{X\cap B}}\restriction_{\Conf(B^c)}}$, we must have
\begin{align}\label{non-vanish-proba}
\PP_{K^{X\cap B}} (\#_B = 0)  = \PP_{K^{X\cap B}} ( \Conf(B^c)) >0, \quad \text{for $\PP_K$-almost every $X\in \Conf(E)$.}
\end{align}
Lemma \ref{lem-res} applied to the operators $K^{X\cap B}$ and $R^{X\cap B}$ and Lemma \ref{lem-palm} now imply Lemma \ref{main-local}.
\end{proof}

\subsection{Proof of Lemma \ref{main-local-bis}}
 Choose an arbitrary unit vector $\varphi \in L^2(E \setminus (A \cup B), \mu)$, let $Q$ be the orthogonal projection from $L^2(E, \mu)$ onto the one dimensional subspace spanned by $\varphi$. Define
 \[
 R = R_\varphi: = (\chi_A + \chi_B + Q) K (\chi_A + \chi_B + Q).
 \]
 Arguing as in the proof of Lemma \ref{main-local}, we obtain the $\PP_K$-almost sure equalities
 \begin{align}\label{as-kernel}
 \begin{split}
 R^{[X,\, A]} =  (\chi_B + Q) K^{[X,\, A]}(\chi_B + Q);  \quad  R^{[ X, \,  A \cup B ] } =   Q K^{[X,  \, A \cup B]}  Q;  \quad (R^{[X,\, A]})^{ [X,\, B]} =   Q (K^{[X,\, A]})^{ [X,\, B]}  Q.
  \end{split}
 \end{align}
We also have the following description of conditional measures:
 \begin{align*}
 \PP_R (\cdot |   X,  A ) = \PP_{ R^{[X, \, A]}} \an
 \PP_R (\cdot |   X, A\cup B ) = \PP_{ R^{[X, \,  A \cup B ]}}, \quad \text{for $\PP_R$-almost every $X\in \Conf(E)$.}
 \end{align*}
The above first equality implies that
 \begin{align*}
 \big[ \PP_R (\cdot |   X, A )\big](\cdot | X, B ) = \PP_{ R^{[X,\, A]}}(\cdot | X, B )  = \PP_{(R^{[X,\, A]})^{ [X,\, B]}}, \quad \text{for $\PP_R$-almost every $X\in \Conf(E)$.}
 \end{align*}
Now we may apply the measure-theoretic identity
\begin{align*}
 \big[ \PP_R (\cdot |    X, A )\big](\cdot |X, B )  = \PP_R (\cdot |   X, A\cup B ), \quad \text{for $\PP_R$-almost every $X\in \Conf(E)$}
\end{align*}
and obtain
 \begin{align}\label{identity-prob}
  \PP_{ R^{[X, \,  A \cup B ]}} = \PP_{(R^{[X,\, A]})^{ [X,\, B]}}, \quad \text{for $\PP_R$-almost every $X\in \Conf(E)$.}
 \end{align}
It follows that for $\PP_R$-almost every $X\in \Conf(E)$, we have
 \begin{align*}
 \E_{\PP_R} \big[  \# ( X \cap (E \setminus (A \cup B)) \big|  X, A \cup B\big]  = \tr \Big( \chi_{E \setminus (A\cup B)} R^{[X, \, A \cup B ]}  \chi_{E \setminus (A\cup B)} \Big) = \tr \Big( \chi_{E \setminus (A\cup B)}   (R^{[X,\, A]})^{ [X,\, B]}   \chi_{E \setminus (A\cup B)} \Big).
 \end{align*}
 Combining with \eqref{as-kernel}, we obtain the $\PP_R$-almost sure equality
 \begin{align*}
 \tr \Big( \chi_{E \setminus (A\cup B)} Q K^{[ X, \, A \cup B ]}  Q \chi_{E \setminus (A\cup B)} \Big) = \tr \Big( \chi_{E \setminus (A\cup B)}   Q(K^{[X,\, A]})^{ [X,\, B]}   Q \chi_{E \setminus (A\cup B)} \Big).
  \end{align*}
That is,
 \begin{align*}
  \langle K^{ [X, \, A \cup B]} \varphi, \varphi\rangle
 =\langle  (K^{[X,\, A]})^{ [X,\, B]} \varphi, \varphi \rangle, \quad \text{for $\PP_R$-almost every $X\in \Conf(E)$.}
 \end{align*}
 Since $\varphi$ is arbitrary and since $L^2(E\setminus (A \cup B), \mu)$ is separable and both $K^{ [X, \, A \cup B]}$ and $(K^{[X,\, A]})^{ [X,\, B]}$ are supported on $L^2(E\setminus (A \cup B), \mu)$, we obtain
 \begin{align}\label{2-sets-bis}
 K^{ [X, \, A \cup B]}  =   (K^{[X,\, A]})^{ [X,\, B]}, \quad \text{for $\PP_R$-almost every $X\in \Conf(E)$.}
 \end{align}
Observe that the equality $\chi_{A\cup B} R \chi_{A\cup B} = \chi_{A\cup B} K \chi_{A\cup B}$ implies the equality $(\pi_{A\cup B})_{*} (\PP_R) = (\pi_{A\cup B})_{*} (\PP_K)$. Combining with \eqref{2-sets-bis} and the fact that $K^{ [X, \, A \cup B]}$ and $(K^{[X,\, A]})^{ [X,\, B]}$ are $\mathcal{F}( A\cup B)$-measurable, we get the desired equality
\begin{align*}
 K^{ [X, \, A \cup B]}  =   (K^{[X,\, A]})^{ [X,\, B]}, \quad \text{for $\PP_K$-almost every $X\in \Conf(E)$.}
 \end{align*}
\qed

\section{The martingale property: proof of Lemma \ref{lem-mart-prop}.}
\begin{proposition}\label{prop-1-step}
For any bounded Borel subset $B\subset E$, we have
\begin{align}\label{cond-to-empty}
\E_{\PP_K} ( K^{[X,\, B]}) =  \int_{\Conf(E)} K^{[X,\, B]}  \PP_K(\dd X) = \chi_{E\setminus B} K \chi_{E\setminus B}.
\end{align}
\end{proposition}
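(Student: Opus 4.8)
The plan is to reduce the operator identity \eqref{cond-to-empty} to the equality of two quadratic forms on $L^2(E\setminus B,\mu)$ and then to obtain the latter from the first local property, Lemma \ref{main-local}, applied to rank-one projections. First I would record that the left-hand side of \eqref{cond-to-empty} makes sense in $\mathscr{L}_{1,loc}(L^2(E\setminus B,\mu))$: the map $X\mapsto K^{[X,\,B]}$ is $\mathcal{F}(B)$-measurable and $\PP_K$-a.s.\ a positive contraction with $K^{[X,\,B]}=\chi_{E\setminus B}K^{[X,\,B]}\chi_{E\setminus B}$, and for every bounded Borel $B'$ the quantity $\E_{\PP_K}\|\chi_{B'}K^{[X,\,B]}\chi_{B'}\|_1$ is finite, since it equals $\E_{\PP_K}\#_{B'\setminus B}$ by Proposition \ref{prop-DPP}. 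Both sides of \eqref{cond-to-empty} are then self-adjoint positive contractions supported on $L^2(E\setminus B,\mu)$, so by the polarization identity it suffices to prove
\[
\int_{\Conf(E)}\langle K^{[X,\,B]}\varphi,\varphi\rangle\,\PP_K(\dd X)=\langle K\varphi,\varphi\rangle\qquad\text{for every unit vector }\varphi\in L^2(E\setminus B,\mu).
\]

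To prove this scalar identity, fix a unit vector $\varphi\in L^2(E\setminus B,\mu)$, let $Q=\varphi\otimes\overline{\varphi}$ be the orthogonal projection onto $\C\varphi$, and form $R=(Q+\chi_B)K(Q+\chi_B)$ as in \eqref{def-R}. Since $Q+\chi_B$ is an orthogonal projection, $R$ is self-adjoint, locally trace class, with $\spec(R)\subset[0,1]$ — this is already contained in the proof of Lemma \ref{main-local} — and $\chi_{E\setminus B}R\chi_{E\setminus B}=QKQ=\langle K\varphi,\varphi\rangle\,Q$ because $(Q+\chi_B)\chi_{E\setminus B}=Q$. I would then compute the mean number of particles of the point process $(\pi_{E\setminus B})_*(\PP_R)$ on $\Conf(E\setminus B)$ in two ways. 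On one hand, by the description of push-forwards of determinantal processes recalled in \S\ref{sec-DPP-cond} (applied to the kernel $R$), $(\pi_{E\setminus B})_*(\PP_R)=\PP_{\chi_{E\setminus B}R\chi_{E\setminus B}}$, so this mean equals $\tr(QKQ)=\langle K\varphi,\varphi\rangle$. On the other hand, by Proposition \ref{prop-DPP} for $R$ together with the disintegration $\PP_R=\int\PP_R(\cdot\,|\,X,B)\,\PP_R(\dd X)$, one gets $(\pi_{E\setminus B})_*(\PP_R)=\int\PP_{R^{[X,\,B]}}\,\PP_R(\dd X)$, so the same mean equals $\int\tr(R^{[X,\,B]})\,\PP_R(\dd X)$; by Lemma \ref{main-local} one has $R^{[X,\,B]}=QK^{[X,\,B]}Q$ for $\PP_R$-a.e.\ $X$, and $\tr(QK^{[X,\,B]}Q)=\langle K^{[X,\,B]}\varphi,\varphi\rangle$, so this mean equals $\int\langle K^{[X,\,B]}\varphi,\varphi\rangle\,\PP_R(\dd X)$. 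Equating the two expressions yields $\int\langle K^{[X,\,B]}\varphi,\varphi\rangle\,\PP_R(\dd X)=\langle K\varphi,\varphi\rangle$.

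It then remains to replace $\PP_R$ by $\PP_K$. Since $\chi_B R\chi_B=\chi_B K\chi_B$, the marginals $(\pi_B)_*(\PP_R)$ and $(\pi_B)_*(\PP_K)$ coincide, both being determinantal with kernel $\chi_B K\chi_B$; and the integrand $X\mapsto\langle K^{[X,\,B]}\varphi,\varphi\rangle$ is $\mathcal{F}(B)$-measurable, since $K^{[X,\,B]}=K^{[X\cap B,\,B]}$ is built from the kernel $K$ and the finite configuration $X\cap B$ alone. Hence $\int\langle K^{[X,\,B]}\varphi,\varphi\rangle\,\PP_R(\dd X)=\int\langle K^{[X,\,B]}\varphi,\varphi\rangle\,\PP_K(\dd X)$, which gives the scalar identity, and therefore \eqref{cond-to-empty}.

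The step I expect to carry the real content — and to require the most care — is the passage from a diagonal (first-correlation) statement to the full operator identity: feeding the first correlation functional directly into the decomposition $\PP_K=\int\PP_{K^{[X,\,B]}}\,\PP_K(\dd X)$ yields only $\int K^{[X,\,B]}(x,x)\,\PP_K(\dd X)=(\chi_{E\setminus B}K\chi_{E\setminus B})(x,x)$ for $\mu$-a.e.\ $x$, which is strictly weaker than \eqref{cond-to-empty}. Conjugating $K$ by the projection $Q+\chi_B$ and invoking Lemma \ref{main-local} is precisely the device that upgrades the diagonal identity for the \emph{auxiliary} kernel $R$ into the quadratic-form identity for $K^{[X,\,B]}$; the only remaining delicate point is the routine check that the $\PP_K$-a.s.\ conclusion of Lemma \ref{main-local} transfers to a $\PP_R$-a.s.\ statement, which is legitimate because both $R^{[X,\,B]}$ and $QK^{[X,\,B]}Q$ depend on $X$ only through $X\cap B$ and $(\pi_B)_*(\PP_R)=(\pi_B)_*(\PP_K)$.
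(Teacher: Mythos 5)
Your proposal is correct and follows essentially the same route as the paper: the paper's proof also fixes a unit vector $\varphi\in L^2(E\setminus B,\mu)$, forms $R=(\varphi\otimes\overline{\varphi}+\chi_B)K(\varphi\otimes\overline{\varphi}+\chi_B)$, computes $\E_{\PP_R}[\#_{E\setminus B}]$ in two ways via Lemma \ref{main-local} and the tower property, and then transfers from $\PP_R$ to $\PP_K$ using $(\pi_B)_*(\PP_R)=(\pi_B)_*(\PP_K)$ and the $\mathcal{F}(B)$-measurability of $X\mapsto K^{[X,\,B]}$. The only difference is presentational (you phrase the double count as the mean of the push-forward process rather than as a conditional expectation), so no further comment is needed.
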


\begin{remark*}
Extending the argument of Benjamini, Lyons, Peres and Schramm \cite{BLPS} for the case of spanning trees, Lyons \cite[Lemma 7.17]{DPP-L} proved \eqref{cond-to-empty} when $E$ is discrete and $K$ is an orthogonal projection on $\ell^2(E)$. Our proof, based on the local property, is quite different and works both in the continuous and the discrete  setting.
\end{remark*}

\begin{proof}[Proof of Lemma \ref{lem-mart-prop} assuming Proposition \ref{prop-1-step}.]
Applying Proposition \ref{prop-1-step} to the kernel $K^{[X, \, B_n]}$  and the bounded Borel subset $B_{n+1}\setminus B_n \subset E\setminus B_n$, we obtain
\begin{align*}
\E_{\PP_{K^{[X, \, B_n]}}}\Big[  (K^{[X,\, B_n]})^{[X, \,   B_{n+1}\setminus  B_n]}  \Big] =\chi_{E\setminus B_{n+1}} K^{[X,\, B_n]} \chi_{E\setminus B_{n+1}}, \quad \text{for $\PP_{K}$-almost every $X$}.
\end{align*}
The equality  $\PP_{K^{[X, \, B_n]}} = \PP_{K} (\cdot |X,  B_n)$ now yields
\begin{align*}
 \E_{\PP_{K^{[X, \, B_n]}}}\Big[  (K^{[X,\, B_n]})^{[X, \,   B_{n+1}\setminus  B_n]}  \Big] = \E_{\PP_K}\Big[    (K^{[X,\, B_n]})^{[X, \,   B_{n+1}\setminus  B_n]}   \Big| \mathcal{F}(B_n)\Big],\quad \text{for $\PP_K$-almost every $X$.}
\end{align*}
Combining with Lemma  \ref{main-local-bis}, we get
\begin{align*}
  \E_{\PP_K}\Big[   K^{[X, \, B_{n+1}]} \Big| \mathcal{F}(B_n) \Big]  = \chi_{E\setminus B_{n+1}} K^{[X, \, B_n]} \chi_{E\setminus B_{n+1}}, \quad \text{for $\PP_K$-almost every $X$.}
\end{align*}
By linearity of the composition on the left and on the right with the operator of  multiplication by $\chi_{E\setminus W}$ and the elementary equalities $\chi_{E\setminus W} \cdot  \chi_{E\setminus B_{n+1}}  =  \chi_{E\setminus W}$,  we get the desired martingale property:
\begin{align*}
   \E\Big[  \chi_{E\setminus W} K^{[X, \, B_{n+1}]}  \chi_{E\setminus W} \Big| \mathcal{F}( B_n)\Big]   =  \chi_{E\setminus W}  K^{[X, \, B_n]} \chi_{E\setminus W}, \quad \text{for $\PP_K$-almost every $X$.}
\end{align*}
\end{proof}

\begin{proof}[Proof of Proposition \ref{prop-1-step}]
Let $\varphi  \in L^2(E\setminus B, \mu)$ be such that $\| \varphi\|_2=1$. We use \eqref{def-R} for $Q = \varphi \otimes \overline{\varphi}$, the orthogonal projection onto the one-dimensional space spanned by $\varphi$, and thus set
\[
R = (\varphi \otimes \overline{\varphi}+\chi_B)K(\varphi \otimes \overline{\varphi}+\chi_B).
\]
 We have the clear identity
\begin{equation}\label{pibkr}
(\pi_B)_*(\PP_R) = \PP_{\chi_B R\chi_B}=  \PP_{\chi_B K \chi_B} = (\pi_B)_*(\PP_K).
\end{equation}
By Lemma \ref{main-local}, for $\PP_K$-almost every $X\in \Conf(E)$, we have
\begin{align*}
R^{[X, \, B]} =  Q K^{[X, \, B]} Q  = (\varphi \otimes \overline{\varphi}) K^{[X, \, B]} (\varphi \otimes \overline{\varphi}).
\end{align*}
Since clearly $K^{[X, \, B]} = K^{[X \cap B, \, B]}$ and $R^{[X, \, B]} = R^{[X \cap B, \, B]}$, the above equality holds  for $\PP_R$-almost every $X\in \Conf(E)$.
Now recall  that $\PP_R (\cdot| X, B) = \PP_{R^{[X,\, B]}}$, for $\PP_R$-almost every $X\in \Conf(E)$. Hence
\begin{align*}
\E_{\PP_{R}}\big[\#_{E\setminus B} \big| X, B \big]= \E_{\PP_{R^{[X, \, B]}}}\big[\#_{E\setminus B} \big] =  \tr (\chi_{E\setminus B}R^{[X, \, B]}  \chi_{E\setminus B})  =\langle  K^{[X, \, B]} \varphi, \varphi \rangle, \, \text{for $\PP_R$-almost every $X\in \Conf(E)$}.
\end{align*}
Consequently,
\begin{align*}
\E_{\PP_{R}}\big[\#_{E\setminus B} \big] =\tr(\chi_{E\setminus B} R \chi_{E\setminus B})  = \tr(Q  K    Q) = \langle K \varphi, \varphi\rangle.
\end{align*}
On the other hand,
\begin{align*}
\E_{\PP_{R}}\big[\#_{E\setminus B} \big] = \E_{\PP_R} \Big(\E_{\PP_{R}}\big[\#_{E\setminus B} \big| X, B \big]  \Big)  = \E_{\PP_R} \Big( \langle  K^{[X, \, B]} \varphi, \varphi \rangle  \Big),
\end{align*}
whence
\[
  \E_{\PP_R} \Big( \langle  K^{[X, \, B]} \varphi, \varphi \rangle  \Big) = \langle K \varphi, \varphi\rangle.
\]
 The relation  $K^{[X, \, B]} = K^{[X\cap B, \, B]}$ and the identity (\ref{pibkr}) together give
\begin{align*}
 \E_{\PP_R} \Big( \langle  K^{[X, \, B]} \varphi, \varphi \rangle  \Big) &=   \E_{\PP_R} \Big( \langle  K^{[X\cap B, \, B]} \varphi, \varphi \rangle  \Big) = \E_{\PP_K} \Big( \langle  K^{[X\cap B, \, B]} \varphi, \varphi \rangle  \Big)  =  \E_{\PP_K} \Big( \langle  K^{[X, \, B]} \varphi, \varphi \rangle  \Big)
\end{align*}
and
\begin{equation}\label{exp-kxb}
  \E_{\PP_K} \Big( \langle  K^{[X, \, B]} \varphi, \varphi \rangle  \Big) = \langle K \varphi, \varphi\rangle.
\end{equation}
Since $\varphi$ is an arbitrarily chosen unit function in $L^2(E\setminus B)$ and since $K^{[X, \,B]} = \chi_{E\setminus B} K^{[X, \,B]} \chi_{E\setminus B}$, we obtain \eqref{cond-to-empty}.
\end{proof}

\section{Proof of Lemma \ref{key-lem}}

\begin{proposition}\label{prop-RNP}
Let $W\subset E$ be a Borel subset, and let $B_1 \subset B_2 \subset \cdots B_n\subset \cdots \subset W $ be an increasing exhausting sequence of bounded Borel subsets of  $W$.  The sequence $\big( \chi_{E\setminus W}  K^{[X, \, B_n]} \chi_{E\setminus W}  \big)_{n\in \N}$
converges $\PP_K$-almost surely in the space of locally trace class operators.
\end{proposition}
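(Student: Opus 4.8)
The plan is to combine the martingale property of Lemma \ref{lem-mart-prop} with the Radon--Nikodym property of the trace class (Proposition \ref{prop-rnp}). Convergence in $\mathscr{L}_{1, loc}(L^2(E\setminus W, \mu))$ is tested by the seminorms $T \mapsto \|\chi_A T \chi_A\|_1$ with $A$ a bounded Borel subset of $E\setminus W$; since $E$ is $\sigma$-compact one may write $E\setminus W = \bigcup_k A_k$ for an increasing sequence of bounded Borel sets $A_k$. It therefore suffices to show, for each fixed $k$, that $\big(\chi_{A_k} K^{[X,\, B_n]} \chi_{A_k}\big)_{n\in\N}$ converges $\PP_K$-almost surely in the Banach space $\mathscr{L}_1(L^2(E, \mu))$, and then to intersect the resulting full-measure events over $k$.

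So fix a bounded Borel set $A\subset E\setminus W$ and set $M_n(X) := \chi_{A} K^{[X,\, B_n]} \chi_{A}$. First I would check that $M_n$ is an $\mathscr{L}_1(L^2(E,\mu))$-valued, $\mathcal{F}(B_n)$-measurable random variable: since $K^{[X,\, B_n]}$ is a positive contraction with range inside $L^2(E\setminus B_n,\mu)\supset L^2(A,\mu)$, the operator $M_n(X)$ is positive and trace class with $\|M_n(X)\|_1 = \tr\big(\chi_A K^{[X,\, B_n]} \chi_A\big)$, finite for every $X$ by the kernel realization \eqref{canonical-KXB}. Bochner measurability follows from the Pettis criterion (Proposition \ref{prop-pettis}), testing against contractive finite rank operators: such pairings are finite linear combinations of matrix coefficients $\langle K^{[X,\, B_n]}\eta, \eta'\rangle$ with $\eta,\eta'\in L^2(E\setminus W,\mu)$, which are Borel in $X$ and, via $K^{[X,\, B_n]} = K^{[X\cap B_n,\, B_n]}$, even $\mathcal{F}(B_n)$-measurable. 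By polarization these matrix coefficients are $(\mathcal{F}(B_n))_n$-martingales by Lemma \ref{lem-mart-prop}, so, again by Proposition \ref{prop-pettis}, $(M_n)_n$ is an $(\mathcal{F}(B_n))_n$-adapted martingale in $\mathscr{L}_1(L^2(E,\mu))$.

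Next I would bound the martingale in $L^1(\mathscr{L}_1)$. Applying Proposition \ref{prop-1-step} (equivalently, the tower property down to $B_0 = \emptyset$) and composing with $\chi_A$ gives $\E_{\PP_K}[M_n] = \chi_A K \chi_A$, whence, taking traces of this identity of trace class operators, $\E_{\PP_K}\big[\|M_n\|_1\big] = \tr(\chi_A K \chi_A) < \infty$, a bound independent of $n$. Since $\mathscr{L}_1(L^2(E,\mu))$ is separable and has the Radon--Nikodym property by Proposition \ref{prop-rnp}, every martingale bounded in $L^1(\mathscr{L}_1)$ converges almost surely; hence $(M_n)_n$ converges $\PP_K$-almost surely in $\mathscr{L}_1(L^2(E,\mu))$. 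Assembling over the exhausting sequence $(A_k)_k$ gives the proposition.

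I expect the conceptual step (martingale $+$ Radon--Nikodym $\Rightarrow$ almost sure convergence) to be immediate; the only delicate points are the routine bookkeeping, namely that $X\mapsto M_n(X)$ is genuinely Bochner measurable and $\mathcal{F}(B_n)$-measurable into $\mathscr{L}_1$, extracted from the explicit kernel formula \eqref{canonical-KXB} via Proposition \ref{prop-pettis}, and the verification that truncation by $\chi_A$ carries the $\mathscr{L}_{1,loc}$-valued martingale of Lemma \ref{lem-mart-prop} to an honest $\mathscr{L}_1$-valued one. No quantitative variance estimate is required at this stage.
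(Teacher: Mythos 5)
Your proof is correct and rests on the same two pillars as the paper's: the martingale property of Lemma \ref{lem-mart-prop} and the Radon--Nikodym property of $\mathscr{L}_1$ (Proposition \ref{prop-rnp}). The only real difference is the localization device used to turn the $\mathscr{L}_{1, loc}$-valued martingale into an honest $L^1$-bounded $\mathscr{L}_1$-valued one. You truncate by $\chi_{A_k}$ for an exhausting sequence of bounded sets $A_k\subset E\setminus W$, note that each $M_n=\chi_{A_k}K^{[X,\,B_n]}\chi_{A_k}$ is a positive martingale with $\E_{\PP_K}\|M_n\|_1=\tr(\chi_{A_k}K\chi_{A_k})<\infty$ by Proposition \ref{prop-1-step}, and intersect countably many full-measure convergence events. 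The paper instead fixes a single weight $\psi:E\setminus W\to(0,1]$ with $\psi^{1/2}K\psi^{1/2}$ trace class and $\inf_{B}\psi>0$ on every bounded $B$, and conjugates: $\bigl(\psi^{1/2}\chi_{E\setminus W}K^{[X,\,B_n]}\chi_{E\setminus W}\psi^{1/2}\bigr)_n$ is then one bounded martingale in $L^1(\PP_K;\mathscr{L}_1)$, whose almost sure limit $F(X,\infty)$ yields the limit operator $\psi^{-1/2}F(X,\infty)\psi^{-1/2}\in\mathscr{L}_{1, loc}$ in a single stroke. What the $\psi$-trick buys is precisely that limit object; in your version you should add one more (routine) line gluing the compressed limits $L_k=\lim_n\chi_{A_k}K^{[X,\,B_n]}\chi_{A_k}$ into a single positive contraction $T$ on $L^2(E\setminus W,\mu)$: the $L_k$ are consistent under further compression and uniformly bounded in operator norm, so the associated sesquilinear form extends from the dense subspace of vectors supported in some $A_k$, and then $\chi_{A_k}T\chi_{A_k}=L_k$ gives $T\in\mathscr{L}_{1, loc}$ and convergence of every bounded compression to the corresponding compression of $T$. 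This is needed because the proposition asserts convergence \emph{to an element of} the space of locally trace class operators, not merely convergence of each truncation separately.
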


\begin{proof}
Since $K$ is locally of trace class, there exists a positive function $\psi: E\setminus W \rightarrow (0, 1]$ such that $\psi^{1/2}K \psi^{1/2} $ is of trace class and for any bounded subset $B\subset E$, we have
\begin{align}\label{inf-psi}
\inf_{x\in B} \psi(x)>0.
\end{align}
Then
\[
\E_{\PP_K}(\sum_{x\in X}\psi(x)  ) =  \int_{E}   \psi(x) K(x, x) \mu(\dd x) = \tr(\psi^{1/2} K \psi^{1/2} )= M_\psi< \infty.
\]
Denote
\[
G(X, n): = \chi_{E\setminus W}  K^{[X, \, B_n]} \chi_{E\setminus W}.
\]
Then for any $n\in\N$, we have
\begin{align}\label{bdd-mart}
M_\psi=  \E_{\PP_K}(\sum_{x\in X}\psi(x)  )   = \E_{\PP_K}\Big[\E_{\PP_K} \big(    \sum_{x\in X}\psi(x)  \big| \mathcal{F}( B_n)\big) \Big]  =  \E_{\PP_K}\big[ \tr(\psi^{1/2} G(X, n)  \psi^{1/2}  )\big].
\end{align}
By the martingale property of the sequence $(G(X, n))_{n\in\N}$ and the equality \eqref{bdd-mart}, the sequence $(\psi^{1/2} G (X, n)  \psi^{1/2})_{n\in\N}$ forms a {\it bounded} martingale in $L^1(\PP_K,  \mathscr{L}_1(L^2(E, \mu)))$.  By Proposition \ref{prop-rnp}, the Banach space $\mathscr{L}_1(L^2(E, \mu))$ has  the Radon-Nikodym property. Therefore there exists a measurable function $F (X, \infty)$ with values in  $\mathscr{L}_1(L^2(E, \mu))$, such that
\[
\psi^{1/2} G(X, n)  \psi^{1/2} \xrightarrow[\text{$\PP_K$-a.s.}]{\text{in $\mathscr{L}_1(L^2(E, \mu))$}} F(X, \infty).
\]
The assumption \eqref{inf-psi} implies that $ \psi^{-1/2} F(X, \infty)  \psi^{-1/2} \in \mathscr{L}_{1, loc}(L^2(E, \mu))$ and we have
\begin{align}\label{kernel-limit}
\chi_{E\setminus W}  K^{[X, \, B_n]} \chi_{E\setminus W} = G (X, n)  \xrightarrow[ \text{$\PP_K$-a.s.}]{\text{in $\mathscr{L}_{1, loc}(L^2(E, \mu))$}}  \psi^{-1/2} F(X, \infty)  \psi^{-1/2}.
\end{align}
\end{proof}

\begin{proof}[Proof of Lemma \ref{key-lem}]
By \eqref{cond-mart-weak}, for $\PP_K$-almost every $X\in \Conf(E)$, we have
\begin{align}\label{bdd-app}
(\pi_{W^c})_{*}[\PP_K(\cdot |X,  B_n)]   \xrightarrow[\text{weakly}]{n\to\infty} \PP_K(\cdot | X, W).
\end{align}
By items (i) and (iv) of Proposition \ref{prop-DPP}, for $\PP_K$-almost every $X\in \Conf(E)$, we have
\begin{align}\label{bdd-dpp}
(\pi_{W^c})_{*}[\PP_K(\cdot |X,  B_n)]  = \PP_{\chi_{E\setminus W}  K^{[X, \, B_n]} \chi_{E\setminus W}}.
\end{align}
Combining \eqref{kernel-limit}, \eqref{bdd-app} and \eqref{bdd-dpp} with the fact that the convergence of correlation kernels in $\mathscr{L}_{1, loc}(L^2(E, \mu))$ implies the weak convergence of the corresponding determinantal measures, we complete the proof of Lemma \ref{key-lem}.
\end{proof}
We conclude this section with a simple general proposition that allows us to construct bounded martingales from the sequence $\big(K^{[X, \, B_n]} \big)_{n\in \N}$.

\begin{proposition}\label{prop-L1-cv}
Let $W\subset E$ be a Borel subset, and let $B_1 \subset B_2 \subset \cdots B_n\subset \cdots \subset W $ be an increasing exhausting sequence of bounded Borel subsets of  $W$.
  Fix any positive function $\psi: E\setminus W \rightarrow (0, 1]$ such that $\psi^{1/2}K \psi^{1/2} $ is of trace class and for any bounded subset $B\subset E$, we have $\inf_{x\in B} \psi(x)>0$.
  Then
  \begin{align}\label{psi-kernel}
  \big( \psi^{1/2}  K^{[X, \, B_n]} \psi^{1/2}   \big)_{n\in \N}
  \end{align}
  is an $\mathscr{L}_1( L^2(E\setminus W, \mu))$-valued martingale which is bounded in $L^2(\Conf(E), \PP; \mathscr{L}_1(L^2(E\setminus W, \mu)))$.
In particular, the sequence converges in $L^1(\Conf(E), \PP; \mathscr{L}_1(L^2( E\setminus W, \mu)))$.
\end{proposition}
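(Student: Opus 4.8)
The plan is to deduce the statement from facts already in hand: the operator-valued martingale property of Lemma~\ref{lem-mart-prop} (together with the scalar reformulation recorded right after its statement), the one-step identity \eqref{cond-to-empty} of Proposition~\ref{prop-1-step}, and the Radon--Nikodym property of the trace class (Proposition~\ref{prop-rnp}). Here $\psi^{1/2}$ is the operator of multiplication by $\psi^{1/2}$ extended by $0$ outside $E\setminus W$; as $0<\psi\le 1$ it is a self-adjoint contraction commuting with $\chi_{E\setminus W}$, with $\psi^{1/2}\chi_{E\setminus W}=\psi^{1/2}$, so that $\psi^{1/2}K^{[X, \, B_n]}\psi^{1/2}=\psi^{1/2}\bigl(\chi_{E\setminus W}K^{[X, \, B_n]}\chi_{E\setminus W}\bigr)\psi^{1/2}$ is a positive operator on $L^2(E\setminus W,\mu)$. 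First I would check the martingale property: for $\varphi\in L^2(E\setminus W,\mu)$ the vector $\psi^{1/2}\varphi$ again lies in $L^2(E\setminus W,\mu)$, so Lemma~\ref{lem-mart-prop} shows that $\bigl(\langle\psi^{1/2}K^{[X, \, B_n]}\psi^{1/2}\varphi,\varphi\rangle\bigr)_{n}=\bigl(\langle\chi_{E\setminus W}K^{[X, \, B_n]}\chi_{E\setminus W}(\psi^{1/2}\varphi),\psi^{1/2}\varphi\rangle\bigr)_{n}$ is an $(\mathcal{F}(B_n))_{n}$-adapted real martingale; by polarization, the $\mathcal{F}(B_n)$-measurability of $X\mapsto K^{[X, \, B_n]}$, and Proposition~\ref{prop-pettis} (with $D$ the contractive finite-rank operators), the sequence $\bigl(\psi^{1/2}K^{[X, \, B_n]}\psi^{1/2}\bigr)_{n}$ is an $(\mathcal{F}(B_n))_{n}$-adapted $\mathscr{L}_1(L^2(E\setminus W,\mu))$-valued martingale, once integrability is known. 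Integrability is immediate from \eqref{cond-to-empty}: each term is positive, so its trace-class norm equals its trace, and exactly as in the proof of Proposition~\ref{prop-RNP} (cf.\ \eqref{bdd-mart}),
\[
\E_{\PP_K}\bigl\|\psi^{1/2}K^{[X, \, B_n]}\psi^{1/2}\bigr\|_{\mathscr{L}_1}=\E_{\PP_K}\tr\bigl(\psi^{1/2}K^{[X, \, B_n]}\psi^{1/2}\bigr)=\tr\bigl(\psi^{1/2}K\psi^{1/2}\bigr)=:M_\psi<\infty ,
\]
the last quantity being finite by hypothesis; hence \eqref{psi-kernel} is bounded in $L^1\bigl(\Conf(E),\PP;\mathscr{L}_1(L^2(E\setminus W,\mu))\bigr)$.

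Next I would upgrade this to $L^2$-boundedness, which is the one step with genuine content. Put $S(X):=\sum_{x\in X}\psi(x)$ (an $\mathcal{F}(E\setminus W)$-measurable nonnegative random variable on $(\Conf(E),\PP_K)$, $\psi$ being extended by $0$ outside $E\setminus W$). Since $B_n\subset W$, the inner identity in \eqref{bdd-mart} says that the $\mathscr{L}_1$-norm of our martingale is the Doob martingale of $S$:
\[
\E_{\PP_K}\bigl[S\mid\mathcal{F}(B_n)\bigr]=\tr\bigl(\psi^{1/2}K^{[X, \, B_n]}\psi^{1/2}\bigr)=\bigl\|\psi^{1/2}K^{[X, \, B_n]}\psi^{1/2}\bigr\|_{\mathscr{L}_1}.
\]
Thus it suffices to prove $S\in L^2(\PP_K)$. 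Using the first and second correlation measures of $\PP_K$ (see \S\ref{sec-palm-cond}), the identity $\rho_{2,\PP_K}(x,y)=K(x,x)K(y,y)-|K(x,y)|^2$ — valid since $K$ is self-adjoint — the resulting bound $\rho_{2,\PP_K}(x,y)\le K(x,x)K(y,y)$, and $\psi\le 1$, one obtains
\[
\E_{\PP_K}\bigl[S^2\bigr]\le\int_{E\setminus W}\psi(x)K(x,x)\,\dd\mu(x)+\Bigl(\int_{E\setminus W}\psi(x)K(x,x)\,\dd\mu(x)\Bigr)^{2}=M_\psi+M_\psi^{2}<\infty .
\]
Conditional Jensen then gives $\E_{\PP_K}\bigl\|\psi^{1/2}K^{[X, \, B_n]}\psi^{1/2}\bigr\|_{\mathscr{L}_1}^{2}\le\E_{\PP_K}[S^2]\le M_\psi+M_\psi^{2}$ for every $n$, so \eqref{psi-kernel} is bounded in $L^2\bigl(\Conf(E),\PP;\mathscr{L}_1(L^2(E\setminus W,\mu))\bigr)$.

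Finally, the scalar family $\bigl\{\|\psi^{1/2}K^{[X, \, B_n]}\psi^{1/2}\|_{\mathscr{L}_1}\bigr\}_{n}$, being bounded in $L^2(\PP_K)$, is uniformly integrable, and the martingale is bounded in $L^1(\mathscr{L}_1)$. Since $L^2(E\setminus W,\mu)$ is a separable Hilbert space, $\mathscr{L}_1(L^2(E\setminus W,\mu))$ is a separable dual space, hence has the Radon--Nikodym property (as in Proposition~\ref{prop-rnp}, via \cite[Corollary~2.15]{pisier-B-martingale}). By characterization (2) of the Radon--Nikodym property recalled above, a uniformly integrable $\mathscr{L}_1$-valued martingale bounded in $L^1(\mathscr{L}_1)$ converges almost surely and in $L^1(\mathscr{L}_1)$, which is precisely the asserted convergence. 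The main obstacle, such as it is, lies entirely in the $L^2$-estimate for $S$, where self-adjointness of $K$ (through positivity of the $2\times2$ correlation determinant) and the normalization $\psi\le1$ are essential; the martingale property and the passage to $L^1$-convergence are bookkeeping built on Lemma~\ref{lem-mart-prop}, formula \eqref{cond-to-empty}, and Proposition~\ref{prop-rnp}.
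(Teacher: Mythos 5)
Your proof is correct and follows essentially the same route as the paper: the paper likewise reduces everything to $L^2$-boundedness of the trace norm, identifies $\big\|\psi^{1/2}K^{[X,\,B_n]}\psi^{1/2}\big\|_{\mathscr{L}_1}$ with the Doob martingale $\E_{\PP_K}\big[S_\psi\,\big|\,\mathcal{F}(B_n)\big]$ of the linear statistic, and then invokes Proposition \ref{prop-L2}. The only difference is that you explicitly verify the hypothesis $\E_{\PP_K}[S_\psi^2]<\infty$ of that proposition via the bound $\rho_{2,\PP_K}(x,y)=K(x,x)K(y,y)-|K(x,y)|^2\le K(x,x)K(y,y)$ together with $\psi\le 1$ --- a step the paper leaves implicit when it applies Proposition \ref{prop-L2} --- which is a worthwhile addition.
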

\begin{proof}
It suffices to show that the sequence  \eqref{psi-kernel}  is bounded in $L^2(\Conf(E), \PP; \mathscr{L}_1(L^2(E\setminus W, \mu)))$. Indeed, we have
\begin{align*}
\big\|\psi^{1/2}  K^{[X, \, B_n]} \psi^{1/2}\big\|_{\mathscr{L}_1(L^2(E\setminus W, \mu))} = \tr\big(\psi^{1/2}  K^{[X, \, B_n]} \psi^{1/2} \big)  = \E_{\PP_K}\big(\sum_{x\in X}\psi(x)  \big| X, B_n \big).
\end{align*}
By Proposition \ref{prop-L2}, we get the desired $L^2(\Conf(E), \PP; \mathscr{L}_1(L^2(E\setminus W, \mu)))$-boundedness of the sequence \eqref{psi-kernel}.
\end{proof}

\begin{remark*}
Let $\mathscr{B}(W)$ be the directed set of bounded measurable subsets of  $W$, ordered by set-inclusion. Then the  set-indexed family $\big( \chi_{E\setminus W}  K^{[X, \, B]} \chi_{E\setminus W}  \big)_{B \in \mathscr{B}(W)}$
is a set-indexed martingale adapted to the filtration $(\mathcal{F}(B))_{B \in \mathscr{B}(W)}$. By virtue of Proposition \ref{prop-L1-cv},  for any positive function $\psi: E\setminus W \rightarrow (0, 1]$ such that $\psi^{1/2}K \psi^{1/2} $ is of trace class and for any bounded subset $B\subset E$, we have $\inf_{x\in B} \psi(x)>0$,  the set-indexed martingale
\begin{align*}
\Big( \psi^{1/2} K^{[X, \, B]} \psi^{1/2}  \Big)_{B\in \mathscr{B}(W)}
\end{align*}
converges in $L^1(\Conf(E), \PP_K; \mathscr{L}_{1}(L^2(E\setminus W, \mu)))$.
\end{remark*}

\section{Proof of Theorem \ref{mainthm-DT}}

Recall  that  we have fixed a realization of our kernel, namely, a Borel function $K(x,y)$  defined on the set $E_0\times E_0$, where $\mu(E\setminus E_0)=0$.
In this section, we make the additional assumption that $K$ is an orthogonal projection onto a subspace $H\subset L^2(E, \mu)$.  Recalling (\ref{kxt}),
we fix a realization for each $h\in H$: namely, in such a way that  the equation
$h(x) = \langle h, K_x\rangle$ holds  for every $x\in E_0$ and every $h\in H$.
 Given any configuration $X \in \Conf(E)$ and a bounded Borel subset $B\subset E$, we set
$
 L(X)  : = \{ h \in H:      h\restriction_X \equiv  0 \} \an
\chi_B L (X) := \{  \chi_B h : h \in L(X)\} \subset L^2(E, \mu).
$
The subspace $L(X)$ is of course closed, but  $\chi_B L (X) $ need not be closed.

Fix an exhausting sequence $E_1 \subset \cdots \subset E_n \subset \cdots \subset E\setminus B$ of bounded Borel subsets of $E\setminus B$, and denote
$$
F_n = E \setminus (B \cup E_n).
$$
Since $B$ is bounded, we have
$
\mathscr{L}_{1, loc}(L^2(B, \mu)) = \mathscr{L}_{1}(L^2(B, \mu)).
$
By Lemma \ref{key-lem},  for $\PP_K$-almost every $X\in \Conf(E)$,  there exists a positive contraction $K^{[X, \, E\setminus B]} \in \mathscr{L}_{1}(L^2(B, \mu))$, such that
\begin{align}\label{berg-limit}
\chi_{B} K^{[X, \, E_n]} \chi_{B} \xrightarrow[\text{in $\mathscr{L}_{1}(L^2(B, \mu))$}]{n\to\infty}  K^{[X, \, E \setminus B]}
\end{align}
and
\begin{align}\label{berg-cond}
\PP_K(\cdot | X, E\setminus B) = \PP_{K^{[X, \, E \setminus B]}}.
\end{align}

\begin{lemma}\label{lem-id}
For $\PP_K$-almost every $X\in \Conf(E)$, we have $K^{[X, \, E \setminus B]} (\chi_B h) = \chi_B h$ for any $h\in L(X \cap  (E \setminus B))$.
\end{lemma}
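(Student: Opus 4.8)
The plan is to pass to the limit in the relation \eqref{berg-limit} after pairing with a suitable test vector, exploiting the structure of the conditional kernels $K^{[X,\,E_n]}$ as orthogonal projections. Recall from the Proposition following Definition \ref{def-canonical-kernel} that, since $K$ is the orthogonal projection onto $H$, the operator $K^{[X,\,E_n]}$ is the orthogonal projection onto the closure of $\chi_{E\setminus E_n} H(X\cap E_n)$, where $H(X\cap E_n)=\{h\in H:\ h\restriction_{X\cap E_n}\equiv 0\}$. Fix $h\in L(X\cap(E\setminus B))$; then for every $n$ the function $h$ vanishes on $X\cap E_n$ (because $E_n\subset E\setminus B$), so $h\in H(X\cap E_n)$, and consequently $\chi_{E\setminus E_n}h$ lies in the range of $K^{[X,\,E_n]}$. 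Since $B\subset E\setminus E_n$ and $\chi_B\chi_{E\setminus E_n}=\chi_B$, compressing by $\chi_B$ gives $\chi_B K^{[X,\,E_n]}\chi_B(\chi_B h)=\chi_B(\chi_{E\setminus E_n}h)=\chi_B h$; more carefully, one uses that $K^{[X,\,E_n]}(\chi_{E\setminus E_n}h)=\chi_{E\setminus E_n}h$ and then applies $\chi_B$ on the left. Thus, for $\PP_K$-almost every $X$ and every $n$,
\[
\chi_B K^{[X,\,E_n]}\chi_B (\chi_B h) = \chi_B h .
\]

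Now I would let $n\to\infty$. By \eqref{berg-limit} the operators $\chi_B K^{[X,\,E_n]}\chi_B$ converge to $K^{[X,\,E\setminus B]}$ in the trace-class norm on $L^2(B,\mu)$, hence in particular in the operator norm, so $\chi_B K^{[X,\,E_n]}\chi_B(\chi_B h)\to K^{[X,\,E\setminus B]}(\chi_B h)$ in $L^2(B,\mu)$. Since the left-hand side is identically $\chi_B h$, the limit is $\chi_B h$, which is exactly the claimed identity $K^{[X,\,E\setminus B]}(\chi_B h)=\chi_B h$. One small point to address is measurability/the null set: the identity \eqref{berg-limit} holds for $\PP_K$-almost every $X$, and for such $X$ the configuration $X\cap(E\setminus B)$ is the relevant one, so $L(X\cap(E\setminus B))$ is well-defined and the argument applies simultaneously to all $h$ in this (separable) subspace.

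The main obstacle I anticipate is purely bookkeeping rather than conceptual: one must be careful that $h$, as an abstract element of $H$, has a pointwise realization (fixed in the opening of this section via $h(x)=\langle h,K_x\rangle$ on $E_0$) that makes "$h$ vanishes on $X\cap E_n$" meaningful, and that this is compatible with the Borel realization of the kernels $K^{[X,\,E_n]}$ chosen in \eqref{canonical-KXB}; since $\mu(E\setminus E_0)=0$, the operator-level statements $K^{[X,\,E_n]}=\chi_{E\setminus E_n}K^{[X,\,E_n]}\chi_{E\setminus E_n}$ and "$K^{[X,\,E_n]}$ is the projection onto $\overline{\chi_{E\setminus E_n}H(X\cap E_n)}$" are unaffected by the choice of representative, so the argument goes through. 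The genuinely substantive inputs — that conditioning on $E_n$ yields the projection onto $\overline{\chi_{E\setminus E_n}H(X\cap E_n)}$, and that $\chi_B K^{[X,\,E_n]}\chi_B$ converges to $K^{[X,\,E\setminus B]}$ — are already established (the former in the Proposition after Definition \ref{def-canonical-kernel}, the latter in Lemma \ref{key-lem}), so the proof is short once these are invoked.
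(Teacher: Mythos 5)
Your overall strategy coincides with the paper's: identify $K^{[X,\,E_n]}$ as the orthogonal projection onto the closure of $\chi_{E\setminus E_n}H(X\cap E_n)$, observe that $h\in H(X\cap E_n)$ for every $n$, and pass to the limit via \eqref{berg-limit}. However, the intermediate identity you assert,
\[
\chi_B K^{[X,\,E_n]}\chi_B(\chi_B h)=\chi_B h \quad\text{for each fixed } n,
\]
is false in general, and your ``more careful'' justification does not establish it. What is true is that $K^{[X,\,E_n]}$ fixes the vector $\chi_{E\setminus E_n}h$, so that $\chi_B K^{[X,\,E_n]}(\chi_{E\setminus E_n}h)=\chi_B h$; but the operator appearing in \eqref{berg-limit} must be applied to $\chi_B h$, not to $\chi_{E\setminus E_n}h$, and these two vectors differ by $\chi_{F_n}h$, where $F_n=E\setminus(B\cup E_n)$. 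Hence
\[
\chi_B K^{[X,\,E_n]}\chi_B(\chi_B h)=\chi_B h-\chi_B K^{[X,\,E_n]}(\chi_{F_n}h),
\]
and the second term has no reason to vanish for fixed $n$: the projection of $\chi_{F_n}h$ onto $\overline{\chi_{E\setminus E_n}H(X\cap E_n)}$ generically has a nonzero component supported on $B$.

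The gap is easily repaired, and the repair is exactly the step the paper carries out: since $K^{[X,\,E_n]}$ is a contraction, $\|\chi_B K^{[X,\,E_n]}(\chi_{F_n}h)\|_2\le\|\chi_{F_n}h\|_2$, and $\|\chi_{F_n}h\|_2\to 0$ because $h\in L^2(E,\mu)$ and $F_n$ decreases to the empty set as $E_n$ exhausts $E\setminus B$. Thus the error term disappears in the limit and the conclusion $K^{[X,\,E\setminus B]}(\chi_B h)=\chi_B h$ follows. The remainder of your argument --- deducing strong convergence from the trace-norm convergence in \eqref{berg-limit}, and the remarks on pointwise realizations and null sets --- is correct and matches the paper.
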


\begin{proof}
For any $n\in \N$, since $E_n \subset E \setminus B$,  by definition, we have
$
L(X \cap (E \setminus B)) \subset  L(X \cap E_n).
$
Since $E_n$ is bounded and $E\setminus E_n = B \cup F_n$, the operator $K^{[X, E_n]}$ is the orthogonal projection from $L^2(E,\mu)$ onto the closure of the subspace
$
\chi_{E \setminus E_n} L(X \cap E_n)  =  \chi_{B \cup F_n} L (X \cap E_n).
$
By \eqref{berg-limit}, for $\PP_K$-almost every $X\in \Conf(E)$, the limit relation 
\begin{align*}
K^{[X, \, E \setminus B]} (\chi_B h) & =\lim_{n\to\infty} \big(\chi_{B} K^{[X, \, E_n]} \chi_{B}\big)(\chi_B h)  = \lim_{n\to\infty} \chi_{B} K^{[X, \, E_n]}(\chi_B h)
\end{align*}
holds for any $h \in L(X \cap (E \setminus B))$. 
Using the equalities  $\chi_B h = \chi_{B \cup F_n} h  - \chi_{F_n} h$,
$
K^{[X, E_n]} (\chi_{B\cup F_n} h ) = \chi_{B\cup F_n} h,
$
and the relation $$
\| \chi_{B} K^{[X, \, E_n]}(\chi_{F_n} h) \|_2 \le \| \chi_{F_n} h \|_2 \xrightarrow{n\to\infty} 0,
$$
we obtain the desired equality 
\[
K^{[X, \, E \setminus B]} (\chi_B h)  =\lim_{n\to\infty} \chi_{B} K^{[X, \, E_n]}(\chi_{B\cup F_n} h - \chi_{F_n} h) = \chi_B h - \lim_{n\to\infty}  \chi_{B} K^{[X, \, E_n]}(\chi_{F_n} h)= \chi_B h.
\]
\end{proof}

\begin{lemma}\label{lem-good-rel}
Let $\PP$ be a point process on $E$.  Then for any bounded Borel subset $B\subset E$, we have
\begin{align}\label{non-vanish-con}
\PP(\#_B = \#(X \cap B) | X, B^c) > 0 \quad \text{for $\PP$-almost every $X \in \Conf(E)$}.
\end{align}
\end{lemma}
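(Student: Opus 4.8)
The plan is to deduce \eqref{non-vanish-con} from the general measure-theoretic fact that an at most countably-valued random variable almost surely lands in an atom of its own conditional distribution, the conditioning $\sigma$-algebra here being $\mathcal{F}(B^c)$. First I would record that, since $B$ is bounded, hence relatively compact, and every configuration is locally finite, the quantity $\#_B(X)=\#(X\cap B)$ is finite for \emph{every} $X\in\Conf(E)$, so that $\#_B$ is a random variable with values in the countable set $\Z_{\geq 0}$. Let $\PP(\cdot\,|\,X,B^c)$ denote the conditional measure of $\PP$ given $\mathcal{F}(B^c)$, as in \S\ref{sec-conditional}, defined for $\PP$-almost every $X$ and supported on $\{Y:Y\cap B^c=X\cap B^c\}$. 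For each integer $k\geq 0$ put
\[
g_k(X):=\PP\big(\#_B=k\,\big|\,X,B^c\big)=\E_{\PP}\big[\ch_{\{\#_B=k\}}\,\big|\,\mathcal{F}(B^c)\big](X\cap B^c),
\]
a nonnegative $\mathcal{F}(B^c)$-measurable function of $X$; since $\#_B(X)$ is a fixed nonnegative integer for each fixed $X$, the quantity appearing in \eqref{non-vanish-con} equals, for $\PP$-almost every $X$, the value $g_{\#_B(X)}(X)$.

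The heart of the argument is then a single use of the ``taking out what is known'' property of conditional expectation, combined with a countable union. Set
\[
A:=\big\{X:g_{\#_B(X)}(X)=0\big\},\qquad C_k:=\big\{X:g_k(X)=0\big\}\in\mathcal{F}(B^c),
\]
so that $A=\bigsqcup_{k\geq 0}\big(\{\#_B=k\}\cap C_k\big)$ is a genuine disjoint decomposition. For each fixed $k$, since $C_k\in\mathcal{F}(B^c)$ and $g_k$ vanishes identically on $C_k$,
\[
\PP\big(\{\#_B=k\}\cap C_k\big)=\E_{\PP}\big[\ch_{C_k}\ch_{\{\#_B=k\}}\big]=\E_{\PP}\big[\ch_{C_k}\,g_k\big]=0.
\]
Summing over $k\in\Z_{\geq 0}$ yields $\PP(A)=0$, that is, $g_{\#_B(X)}(X)>0$ for $\PP$-almost every $X$, which is exactly \eqref{non-vanish-con}.

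I do not expect a real obstacle: the proof is essentially bookkeeping with regular conditional probabilities. The only points that deserve a line or two of care are the existence of a regular version of the conditional distribution $\PP(\cdot\,|\,X,B^c)$ on the standard Borel space $\Conf(E)$, so that $X\mapsto\PP\big(\#_B=\#(X\cap B)\,|\,X,B^c\big)$ is a well-defined measurable function, and the identification of this function with $g_{\#_B(X)}(X)$; both are immediate once one knows $\#_B$ takes at most countably many values. Note also that the statement is completely general — it uses nothing about $\PP$ beyond its being a point process — and in particular does not require $\PP$ to be determinantal or to possess correlation measures.
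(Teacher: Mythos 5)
Your proof is correct and is essentially the argument in the paper: both rest on the observation that the integer-valued variable $\#_B$ (finite everywhere because $B$ is bounded and configurations are locally finite) almost surely takes a value that is an atom of its conditional law given $\mathcal{F}(B^c)$, proved by decomposing the exceptional set over the countably many possible values of $\#_B$ and noting that each piece is null. The only difference is presentational: the paper first disintegrates $\PP$ over $\Conf(B^c)$ and applies the atom claim fibrewise to $\Prob(\cdot\,|\,Z,B^c)$, whereas you run the same computation globally via the defining identity of conditional expectation applied to the $\mathcal{F}(B^c)$-measurable sets $C_k$.
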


\begin{proof}

First of all, decomposing $X=Y\cup Z$, $Y\in\Conf(B)$, $Z\in\Conf(B^c)$, we can rewrite the statement as follows:
\begin{equation}\label{eq:VarLem72}
\Prob\bigl(\{W\in \Conf(B): \#(W)=\#(Y)\}|Z,B^c\bigr)>0
\end{equation}
for $(\pi_{B^c})_*(\mathbb P)$-almost every $Z\in\Conf(B^c)$ and $\Prob({}\cdot{}|Z, B^c)$-almost every $Y\in\Conf(B)$.
We make a simple general claim: given an integer-valued measurable function~$f$ on a probability space $(\Omega, \mathbf P)$, for $\mathbf P$-almost every $y\in\Omega$ we have ${\mathbf P}\{x:f(x)=f(y)\}>0$.
Indeed, if $N=\{n\in\mathbb Z: \mathbf P\{x:f(x)=n)\}=0\}$, then the relation $\mathbf P\{x:f(x)=f(y)\}>0$ fails only if $f(y)\in N$, and
$$
\mathbf P\{y:f(y)\in N\}=\sum_{n\in N}\mathbf P\{y:f(y)=n\}=0.
$$
For $(\pi_{B^c})_*(\mathbb P)$-almost every $Z\in\Conf(B^c)$,  by taking $\Omega=\Conf(B)$, $\mathbf P=\Prob({}\cdot{}|Z,B^c)$, $f=\#_B$, we obtain \eqref{eq:VarLem72}.
\end{proof}

\begin{proof}[Proof of Theorem \ref{mainthm-DT}]
Fix a countable dense subset $T$ of $E$  and let $S_n$ be an enumeration of balls with rational radii centred at $T$:
\begin{align}\label{s-n-collection}
\{S_n: n \in \N\} = \{B(x, q): x\in T, q \in \Q\}.
\end{align}
Since the family \eqref{s-n-collection} is countable, by Lemmata \ref{lem-id} and  \ref{lem-good-rel}, there exists  a measurable subset $\mathcal{A} \subset \Conf(E)$ such that  
\begin{itemize}
\item  $\PP_K(\mathcal{A}) = 1$;
\item for all $X \in \mathcal{A}$ and all $n\in \N$,  the conditional measures $\PP_K(\cdot| X, S_n^c)$ and conditional kernels $K^{[X, \, S_n^c]}$ are defined and satisfy
\begin{align}\label{eq-cond}
\PP_K(\cdot| X, S_n^c)  = \PP_{K^{[X, \, S_n^c]}};
\end{align}
\item for all $X\in \mathcal{A}$ and all $n\in \N$, we have 
\begin{align}\label{s-n-eigenn-f}
K^{[X, \,   S_n^c ]} (\chi_{S_n} h) = \chi_{S_n} h,  \quad \text{for any $h\in L(X \cap  S_n^c)$};
\end{align}
\item for all $X\in \mathcal{A}$ and all $n\in\N$.
\begin{align}\label{pos-pb}
\PP_K(\#_{S_n} = \#(X \cap S_n) | X, S_n^c) >0. 
\end{align}
\end{itemize}

We now show that the above measurable subset $\mathcal{A} \subset \Conf(E)$ satisfies the desired property, that is, $L(X) = \{0\}$ for any $X \in \mathcal{A}$.
Take a fixed configuration $X \in \mathcal{A}$ and assume, by contradiction, that there exists $h_0\in L(X)$, $h_0 \ne 0$. Clearly, since $X$ is a discrete countable subset,  there exists  $n_0\in \N$ such that 
\[
h_0\restriction_{S_{n_0}} \ne  0 \an  X \cap S_{n_0} = \emptyset.
\]
The relation $h_0\restriction_{S_{n_0}} \ne  0 $  implies that $\chi_{S_{n_0}} h_0 \ne 0$ and the relation $X \cap S_{n_0}  = \emptyset$ implies that $L(X)  =  L(X \cap S_{n_0}^c)$ and hence $h_0 \in L(X \cap S_{n_0}^c)$. Using the assumption \eqref{s-n-eigenn-f} on $\mathcal{A}$,   the non-zero function  $\chi_{S_{n_0}} h_0$ satisfies
$
K^{[X, \, S_{n_0}^c]}(\chi_{S_{n_0}} h_0) = \chi_{S_{n_0}} h_0,
$
 whence  $1$ is an eigenvalue of the operator $K^{[X, \, S_{n_0}^c]}$. In particular, this implies
\[
\det(1 - K^{[X, \, S_{n_0}^c]}) =0.
\]
On the other hand, the relations \eqref{eq-cond}, \eqref{pos-pb} together with the gap probability formula \eqref{gap-pb} imply that
\[
 \det (1 - K^{[X, \, S_{n_0}^c]})  = \PP_{K^{[X, \, S_{n_0}^c]}} (\#_{S_{n_0}} =0) = \PP_K(\#_{S_{n_0}} =0 | X, S_{n_0}^c)  = \PP_K(\#_{S_{n_0}} = \#(X \cap S_{n_0}) | X, S_{n_0}^c)>0.
\]
 We thus obtain a contradiction and Theorem \ref{mainthm-DT} is proved completely.
\end{proof}

\section{Triviality of the  tail $\sigma$-algebra: proof of Theorem \ref{main-thm3}}

\begin{definition}\label{def-unbdd}
Fix any increasing exhausting sequence $D_1 \subset \cdots \subset D_n \subset \cdots \subset E$ of bounded Borel subsets fo $E$.  For any Borel subset $W\subset E$, set
\[
K^{[X, \, W]}: = \lim_{n\to\infty} \chi_{E\setminus W} K^{[X,\, W\cap D_n]} \chi_{E\setminus W}.
\]
\end{definition}
The convergence takes place in $\mathscr{L}_{1, loc}(L^2(E,\mu))$ by Proposition \ref{prop-RNP}.The kernel $K^{[X, \, W]}$ is well-defined  for  $\PP_K$-almost every $X$. For fixed $W$, the limit almost surely is independent of the choice of the sequence $(D_n)_{n=1}^\infty$.

\begin{proposition}\label{prop-re-mart}
Fix a bounded Borel subset $B \subset E$ and let $E\setminus B \supset W_1 \supset \cdots \supset W_n \supset \cdots$ be any decreasing sequence of Borel subsets.  Then $\big(\chi_B K^{[X, \, W_n]} \chi_B\big)_{n\in\N}$ is an $(\mathcal{F}(W_n))_{n\in\N}$-adapted reverse martingale defined on the probability space $(\Conf(E), \mathcal{F}(E), \PP_K)$.
\end{proposition}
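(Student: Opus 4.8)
The plan is to check the two defining properties of a reverse martingale adapted to the \emph{decreasing} filtration $(\mathcal{F}(W_n))_{n\in\N}$ — note that $W_1\supset W_2\supset\cdots$ forces $\mathcal{F}(W_1)\supset\mathcal{F}(W_2)\supset\cdots$, and that $B\subset E\setminus W_n$ for every $n$. First, adaptedness and integrability: by Definition~\ref{def-unbdd}, $K^{[X,\,W_n]}$ is the $\mathscr{L}_{1,loc}$-limit of $\chi_{E\setminus W_n}K^{[X\cap(W_n\cap D_j),\,W_n\cap D_j]}\chi_{E\setminus W_n}$, each term being $\mathcal{F}(W_n)$-measurable, hence so is $\chi_B K^{[X,\,W_n]}\chi_B$; and, using Lemma~\ref{key-lem} to identify $K^{[X,\,W_n]}$ as the correlation kernel of $\PP_K(\cdot\mid X,W_n)$,
\[
\E_{\PP_K}\|\chi_B K^{[X,\,W_n]}\chi_B\|_1=\E_{\PP_K}\tr\big(\chi_B K^{[X,\,W_n]}\chi_B\big)=\E_{\PP_K}\big[\E_{\PP_K}[\#_B\mid X,W_n]\big]=\E_{\PP_K}[\#_B]=\tr(\chi_BK\chi_B)<\infty,
\]
so $\chi_B K^{[X,\,W_n]}\chi_B\in L^1(\Conf(E),\PP_K;\mathscr{L}_1(L^2(B,\mu)))$.

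Second, the reverse martingale identity $\E_{\PP_K}[\chi_B K^{[X,\,W_m]}\chi_B\mid\mathcal{F}(W_n)]=\chi_B K^{[X,\,W_n]}\chi_B$ for $m\le n$. Put $V:=W_m\setminus W_n\subset E\setminus W_n$, a possibly unbounded Borel set. The structural input is a tower identity for conditional kernels: for $\PP_K$-almost every $X$,
\[
K^{[X,\,W_m]}=(K^{[X,\,W_n]})^{[X,\,V]},
\]
the right-hand side being formed, as in Definition~\ref{def-unbdd}, from the base kernel $K^{[X,\,W_n]}$ and the set $V$. To prove it, apply Lemma~\ref{key-lem} twice: conditioning $\PP_K$ on the configuration in $W_n$ gives $\PP_K(\cdot\mid X,W_n)=\PP_{K^{[X,\,W_n]}}$, and conditioning this process further on the configuration in $V$ gives $\PP_{(K^{[X,\,W_n]})^{[X,\,V]}}$; on the other hand, the measure-theoretic tower property for conditional measures of point processes (successive conditioning on the disjoint $W_n$ and $V$ equals conditioning on $W_m=W_n\sqcup V$) identifies the same iterated conditional measure with $\PP_K(\cdot\mid X,W_m)=\PP_{K^{[X,\,W_m]}}$; uniqueness of the self-adjoint positive contraction governing a determinantal process yields the operator identity.

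It then remains to take conditional expectations. Writing $L:=K^{[X,\,W_n]}$, which is $\mathcal{F}(W_n)$-measurable with $\PP_K(\cdot\mid X,W_n)=\PP_L$, the tower identity gives $\PP_K$-a.s.
\[
\E_{\PP_K}\big[\chi_B K^{[X,\,W_m]}\chi_B\mid\mathcal{F}(W_n)\big]=\E_{\PP_L}\big[\chi_B L^{[Y,\,V]}\chi_B\big].
\]
I would evaluate the right-hand side by the unbounded analogue of Proposition~\ref{prop-1-step}: exhaust $V$ by bounded sets $V_j=V\cap D_j$; Proposition~\ref{prop-1-step} gives $\E_{\PP_L}[L^{[Y,\,V_j]}]=\chi_{E\setminus V_j}L\chi_{E\setminus V_j}$, and since $B\subset E\setminus W_m\subset E\setminus V_j$ we get $\E_{\PP_L}[\chi_B L^{[Y,\,V_j]}\chi_B]=\chi_B L\chi_B$ for every $j$. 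Letting $j\to\infty$ and using the $\PP_L$-a.s. $\mathscr{L}_{1,loc}$-convergence $\chi_{E\setminus V}L^{[Y,\,V_j]}\chi_{E\setminus V}\to L^{[Y,\,V]}$ from Proposition~\ref{prop-RNP}, together with the $L^1(\PP_L;\mathscr{L}_1(L^2(B,\mu)))$-domination supplied by a weight function $\psi$ exactly as in Proposition~\ref{prop-L1-cv}, we obtain $\E_{\PP_L}[\chi_B L^{[Y,\,V]}\chi_B]=\chi_B L\chi_B=\chi_B K^{[X,\,W_n]}\chi_B$. This is the asserted reverse martingale property; running the argument over the countably many pairs $(m,n)$ and intersecting the corresponding full-measure sets finishes the proof. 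The main obstacle is precisely the unboundedness of $V=W_m\setminus W_n$: since Proposition~\ref{prop-1-step} and Lemma~\ref{main-local-bis} are stated only for bounded subsets, the real work is to push them through by exhaustion while controlling the limit in $L^1$, mirroring the proof of Lemma~\ref{key-lem}; the rest is bookkeeping with conditional expectations of point processes.
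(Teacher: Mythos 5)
Your overall strategy --- reducing the reverse martingale identity to a tower identity $K^{[X,\,W_m]}=(K^{[X,\,W_n]})^{[X,\,V]}$ with $V=W_m\setminus W_n$ unbounded, and then to an unbounded analogue of Proposition \ref{prop-1-step} --- is genuinely different from the paper's, and it has a gap at its central step. To pass from the equality of determinantal measures $\PP_{K^{[X,\,W_m]}}=\PP_{(K^{[X,\,W_n]})^{[X,\,V]}}$ to the operator identity you invoke ``uniqueness of the self-adjoint positive contraction governing a determinantal process.'' That uniqueness is false: if $u$ is any unimodular Borel function and $U$ denotes multiplication by $u$, then $UKU^*$ is again a positive self-adjoint locally trace class contraction inducing the same determinantal measure as $K$, since $\det\bigl(u(x_i)K(x_i,x_j)\overline{u(x_j)}\bigr)=\det\bigl(K(x_i,x_j)\bigr)$. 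The measure determines only gauge-invariant data (the diagonal, moduli of off-diagonal entries, cyclic products), so it does not determine the compression $\chi_B K^{[X,\,W_m]}\chi_B$ you need, whose off-diagonal part on $B\times B$ is not gauge-invariant. This is exactly why the paper's proof of Lemma \ref{main-local-bis} does not argue from equality of measures directly: it compresses by a rank-one projection $Q=\varphi\otimes\overline{\varphi}$, extracts the first intensity $\tr\bigl(QK^{[X,\,A\cup B]}Q\bigr)=\langle K^{[X,\,A\cup B]}\varphi,\varphi\rangle$, which the measure \emph{does} determine, and lets $\varphi$ range over a dense set. Repairing your tower identity would require redoing that compression argument for the unbounded set $V$, which in turn needs an unbounded-set version of Lemma \ref{main-local}; none of this is available in the paper and it is not routine.

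The paper's actual proof sidesteps all of this and never leaves the bounded-set world: for fixed $\phi\in L^2(B,\mu)$ it combines the exact identity $\E_{\PP_K}\bigl[\langle K^{[X,\,W_n\cap D_k]}\phi,\phi\rangle\,\big|\,\mathcal{F}(W_{n+1}\cap D_k)\bigr]=\langle K^{[X,\,W_{n+1}\cap D_k]}\phi,\phi\rangle$ from Lemma \ref{lem-mart-prop} (both sets bounded), the $L^1(\PP_K)$-convergence $\langle K^{[X,\,W\cap D_k]}\phi,\phi\rangle\to\langle K^{[X,\,W]}\phi,\phi\rangle$ (bounded convergence, the kernels being contractions), and the $L^1$-convergence of $\E_{\PP_K}[\,\cdot\,|\,\mathcal{F}(W_{n+1}\cap D_k)]$ to $\E_{\PP_K}[\,\cdot\,|\,\mathcal{F}(W_{n+1})]$, assembled by a three-epsilon estimate. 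Your adaptedness and integrability observations, and the exhaustion-plus-uniform-integrability limit at the end, are fine; but the tower identity on which your argument rests is not justified as written.
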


\begin{proof}
It suffices to prove that for any $\phi\in L^2(B, \mu)$, the sequence $\big(\langle  K^{[X, \, W_n]} \phi, \phi\rangle  \big)_{n\in\N}$
is an $(\mathcal{F}(W_n))_{n\in\N}$-adapted reverse martingale defined on the probability space $(\Conf(E), \mathcal{F}(E), \PP_K)$. By definition, for any $n\in\N$, we have
\begin{align}\label{as-conv}
\langle  K^{[X, \, W_n]} \phi, \phi\rangle  = \lim_{k \to\infty} \langle  K^{[X, \, W_n\cap D_k]} \phi, \phi\rangle, \quad \text{$\PP_K$-almost surely.}
\end{align}
Since all  the operators $K^{[X, \, W_n]}$ are contractive, by the bounded convergence theorem, the convergence  \eqref{as-conv} takes place in $L^1(\PP_K)$ as well. Fix an natural number $n\in\N$.  For any $\varepsilon>0$, let $k\in\N$ be large enough in such a way  that
\begin{equation}
\left\| \langle  K^{[X, \, W_n]} \phi, \phi\rangle - \langle  K^{[X, \, W_n\cap D_k]} \phi, \phi\rangle\right\|_{L^1(\PP_K)}\le \varepsilon; \
\left\| \langle  K^{[X, \, W_{n+1}]} \phi, \phi\rangle - \langle  K^{[X, \, W_{n+1}\cap D_k]} \phi, \phi\rangle\right\|_{L^1( \PP_K)}\le \varepsilon.
\end{equation}
For fixed $n\in \N$, the sequence
\[
\Big(\E_{\PP_K} \Big[ \langle  K^{[X, \, W_n]} \phi, \phi\rangle  \Big| \mathcal{F}(W_{n+1} \cap D_k)\Big]\Big)_{k=1}^\infty
\]
is a  martingale that converges in $L^1$-norm to  $\E_{\PP_K} \Big[ \langle  K^{[X, \, W_n]} \phi, \phi\rangle  \Big| \mathcal{F}(W_{n+1})\Big]$. We can therefore choose $k$  large enough in such  a way that
\begin{align*}
\left\| \E_{\PP_K} \Big[ \langle  K^{[X, \, W_n]} \phi, \phi\rangle  \Big| \mathcal{F}(W_{n+1})\Big]-    \E_{\PP_K} \Big[ \langle  K^{[X, \, W_n]} \phi, \phi\rangle  \Big| \mathcal{F}(W_{n+1} \cap D_k)\Big] \right\|_{L^1(\PP_K)}\le \varepsilon.
\end{align*}
Since $W_{n+1} \cap D_k \subset W_{n} \cap D_k$ and $D_k$ is bounded,  Lemma \ref{lem-mart-prop} implies
\[
\E_{\PP_K} \Big[ \langle  K^{[X, \, W_n \cap D_k]} \phi, \phi\rangle  \Big| \mathcal{F}(W_{n+1} \cap D_k)\Big]  =  \langle  K^{[X, \, W_{n+1} \cap D_k]} \phi, \phi\rangle,
\]
whence
\begin{multline}
\left\| \E_{\PP_K} \Big[ \langle  K^{[X, \, W_n]} \phi, \phi\rangle  \Big| \mathcal{F}(W_{n+1})\Big]-  \langle  K^{[X, \, W_{n+1}]} \phi, \phi\rangle\right\|_{L^1(\PP_K)}\le \\
\le  2 \varepsilon +  \left\|  \E_{\PP_K} \Big[ \langle  K^{[X, \, W_n]} \phi, \phi\rangle  \Big| \mathcal{F}(W_{n+1} \cap D_k)\Big]  -  \langle  K^{[X, \, W_{n+1} \cap D_k]} \phi, \phi\rangle\right\|_{L^1(\PP_K)}
\le \\ \le 3 \varepsilon +  \left\|  \E_{\PP_K} \Big[ \langle  K^{[X, \, W_n \cap D_k]} \phi, \phi\rangle  \Big| \mathcal{F}(W_{n+1} \cap D_k)\Big]  -  \langle  K^{[X, \, W_{n+1} \cap D_k]} \phi, \phi\rangle\right\|_{L^1( \PP_K)} =3 \varepsilon,
\end{multline}
and we obtain the desired reverse martingale relation
$
\E_{\PP_K} \Big[ \langle  K^{[X, \, W_n]} \phi, \phi\rangle  \Big| \mathcal{F}(W_{n+1})\Big] =  \langle  K^{[X, \, W_{n+1}]} \phi, \phi\rangle.
$
\end{proof}

\begin{lemma}\label{lem-var}
For any bounded Borel subset $B\subset E$ and $\phi \in L^2(B^c, \mu)$, we have
\begin{align}\label{cont-var}
\Var_{\PP_K} \big[\langle K^{[X, \, B]} \phi, \phi \rangle \big] \le   \| \phi\|_2^2\cdot  \|\chi_{B}  K \phi \|_2^2,
\end{align}
where $\| \cdot\|_2$ is the Hilbert norm on $L^2(E, \mu)$.
\end{lemma}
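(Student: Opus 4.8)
The plan is to compute the variance of $\langle K^{[X,\,B]}\phi,\phi\rangle$ by exhibiting it as an average of conditional expectations and exploiting the martingale/Palm structure already established. The starting point is the one-step relation \eqref{exp-kxb}, $\E_{\PP_K}\langle K^{[X,\,B]}\phi,\phi\rangle = \langle K\phi,\phi\rangle$, so the variance equals $\E_{\PP_K}\langle K^{[X,\,B]}\phi,\phi\rangle^2 - \langle K\phi,\phi\rangle^2$. To get a handle on the second moment, I would pick a single point $p\in B$ and condition on $X\cap B$ in two stages: first on the single particle configuration restricted to a small neighbourhood or, more efficiently, use the Palm/iteration identity of Lemma \ref{main-local-bis} together with Lemma \ref{lem-palm}. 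Concretely, writing $B$ as a disjoint union and iterating, the quantity $\langle K^{[X,\,B]}\phi,\phi\rangle$ is built from the $K^p$-type corrections; the key algebraic fact is that passing from $K$ to $K^{[X,\,B]}$ only ever \emph{decreases} $\langle\cdot\,\phi,\phi\rangle$ in expectation, but fluctuates, and the fluctuation is controlled by how much mass $\chi_B K\phi$ carries.

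The cleaner route, which I expect is the intended one, is a direct second-moment computation using the explicit first-order kernel. Fix $\phi\in L^2(B^c,\mu)$ and consider the map $B\mapsto \langle K^{[X,\,B]}\phi,\phi\rangle$ as a martingale indexed by the partially ordered set of bounded Borel subsets; by the martingale property (Lemma \ref{lem-mart-prop}) and the $L^2$-convergence statement (Proposition \ref{prop-L1-cv} and the remark following it), the variance of the terminal value can be bounded by summing increments. So I would partition $B$ into finitely many small pieces $B = \bigsqcup_{j=1}^m \Delta_j$, write the telescoping sum $\langle K^{[X,\,B]}\phi,\phi\rangle - \langle K\phi,\phi\rangle = \sum_j \big(\langle K^{[X,\,\Delta_1\cup\cdots\cup\Delta_j]}\phi,\phi\rangle - \langle K^{[X,\,\Delta_1\cup\cdots\cup\Delta_{j-1}]}\phi,\phi\rangle\big)$, note that by the martingale property these increments are orthogonal in $L^2(\PP_K)$, and hence $\Var\langle K^{[X,\,B]}\phi,\phi\rangle = \sum_j \E\big|\text{increment}_j\big|^2$. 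Each increment, after conditioning on $\mathcal F(\Delta_1\cup\cdots\cup\Delta_{j-1})$, is a one-step increment for the kernel $K^{[X,\,\Delta_1\cup\cdots\cup\Delta_{j-1}]}$ and the set $\Delta_j$, so it suffices to bound a \emph{single} one-step increment: for a kernel $L$ (self-adjoint, $0\le L\le 1$, locally trace class) and a bounded Borel set $\Delta$ disjoint from $\supp\phi$,
\[
\Var_{\PP_L}\langle L^{[X,\,\Delta]}\phi,\phi\rangle \le \|\phi\|_2^2\,\|\chi_\Delta L\phi\|_2^2 .
\]

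For this single-step bound I would condition on $\#_\Delta$: with $\PP_L$-positive probability the set $\Delta$ is empty, in which case $L^{[X,\,\Delta]} = \chi_{E\setminus\Delta} L(1-\chi_\Delta L)^{-1}\chi_{E\setminus\Delta}$ is deterministic, contributing nothing to the variance beyond its own small deviation from $L$; and when $\Delta$ contains particles, the corrections are of the form $L^{p_1,\dots,p_k}$. The dominant contribution comes from the event that $\Delta$ contains exactly one particle $p$, which has probability $\approx \int_\Delta L(p,p)\,d\mu(p)$, and on this event $\langle L^{[X,\,\Delta]}\phi,\phi\rangle - \langle L\phi,\phi\rangle$ is, to leading order, $-|\langle\phi, L_p\rangle|^2/L(p,p) = -|(L\phi)(p)|^2/L(p,p)$ (up to the resolvent corrections $(1-\chi_\Delta L)^{-1}$, which are $1+O(\|\chi_\Delta L\|)$). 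Computing $\E$ of the square against the first correlation measure $L(p,p)\,d\mu(p)$ then yields a bound of order $\int_\Delta |(L\phi)(p)|^2\,d\mu(p)\cdot\sup|\phi|^2$-type terms; Cauchy--Schwarz against $\|\phi\|_2$ and summing the pieces $\Delta_j$ telescopes the right-hand side to $\|\phi\|_2^2\,\|\chi_B L\phi\|_2^2$. The main obstacle is making the ``to leading order'' rigorous: one must control the multi-particle events in $\Delta_j$ (probability $O((\mu(\Delta_j))^2)$, hence negligible in a Riemann-sum limit as $\max_j\mu(\Delta_j)\to 0$) and the resolvent factors $(1-\chi_{\Delta_j}L)^{-1}$ uniformly, using $\|\chi_{\Delta_j}L\|\to 0$ from \eqref{one-side-c}; once the error terms are shown to vanish in the limit, the inequality \eqref{cont-var} follows by lower semicontinuity of the variance under the almost sure and $L^2$ convergence of $\langle K^{[X,\,B]}\phi,\phi\rangle$ along the refining partitions.
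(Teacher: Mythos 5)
Your plan has a decisive gap at the step you describe as ``summing the pieces $\Delta_j$ telescopes the right-hand side to $\|\phi\|_2^2\,\|\chi_B K\phi\|_2^2$''. Writing $A_j=\Delta_1\cup\cdots\cup\Delta_j$ and $P_j=K^{[X,\,A_j]}$, the orthogonality of martingale increments together with your one-step bound yields
\[
\Var_{\PP_K}\bigl[\langle K^{[X,\,B]}\phi,\phi\rangle\bigr]\;\le\;\|\phi\|_2^2\sum_{j}\E_{\PP_K}\bigl\|\chi_{\Delta_j}P_{j-1}\phi\bigr\|_2^2,
\]
and the summands involve the \emph{random conditional} kernels $P_{j-1}$, not $K$. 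There is no telescoping here: since $T\mapsto\|\chi_{\Delta_j}T\phi\|_2^2$ is convex and $\E[P_{j-1}]=\chi_{A_{j-1}^c}K\chi_{A_{j-1}^c}$ by Proposition \ref{prop-1-step}, Jensen's inequality gives $\E\|\chi_{\Delta_j}P_{j-1}\phi\|_2^2\ \ge\ \|\chi_{\Delta_j}K\phi\|_2^2$, i.e.\ the inequality you need goes in the \emph{wrong} direction, generically strictly. So even if the one-particle asymptotics, the multi-particle error terms and the resolvent corrections were all made rigorous (and the Riemann-sum refinement already fails when $\mu$ has atoms on $B$), the resulting bound is $\|\phi\|_2^2\sum_j\E\|\chi_{\Delta_j}P_{j-1}\phi\|_2^2$, which dominates rather than is dominated by $\|\phi\|_2^2\|\chi_BK\phi\|_2^2$. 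The approach therefore does not establish \eqref{cont-var} without a genuinely new idea controlling the accumulation of conditioning.

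The partition is also unnecessary overhead: the proof in the paper is a single global one-step computation. When $K$ is a projection, $K^{[X,\,B]}$ is a projection as well, so $\|K^{[X,\,B]}\phi\|_2^2=\langle K^{[X,\,B]}\phi,\phi\rangle$; this linearizes the second moment, and after Cauchy--Schwarz one applies Proposition \ref{prop-1-step} to get
\[
\Var\le\E\bigl\|(K^{[X,\,B]}-\chi_{B^c}K\chi_{B^c})\phi\bigr\|_2^2=\langle K\phi,\phi\rangle-\|\chi_{B^c}K\phi\|_2^2=\|\chi_BK\phi\|_2^2
\]
for $\|\phi\|_2\le1$. The general case is then reduced to the projection case by dilating $K$ to a locally trace class projection $\widetilde K$ on $L^2(E\sqcup\N)$ and invoking the local property (Lemma \ref{main-local}) to identify $\langle K^{[X,\,B]}\phi,\phi\rangle$ with $\langle\widetilde K^{[X,\,B]}\phi,\phi\rangle$ in distribution. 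If you want to salvage your route, you would at minimum need to replace the per-step Cauchy--Schwarz bound by an exact identity whose sum over $j$ collapses; as written, the key summation is asserted but false.
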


We first prove Lemma \ref{lem-var}  when $K$ is an orthogonal projection. This part of the proof is similar to the argument of Benjamini, Lyons, Peres and Schramm
\cite[Lemma 8.6]{BLPS} and Lyons \cite[Lemma 7.18]{DPP-L}.  The  proof of  Lemma \ref{lem-var} in full generality proceeds by reduction to the case of projections (the usual argument of extending the phase space must be slightly modified
  in the continuous setting) and is postponed to the end of the section.

\begin{proof}[Proof of Lemma \ref{lem-var} when $K$ is an orthogonal projection]
By homogeneity, we may assume that
$
\| \phi\|_2 \le 1.
$
Since $K$ is an orthogonal projection, by \cite[Proposition 2.4]{BQS}, so is $K^{[X, B]}$ for $\PP_K$-almost every $X\in \Conf(E)$. By Proposition \ref{prop-1-step}, we have
\begin{multline}
\Var_{\PP_K} \big[\langle K^{[X, \, B]} \phi, \phi \rangle \big] = \E_{\PP_K}\Big |  \big\langle (K^{[X, \, B]}  - \chi_{B^c} K \chi_{B^c}) \phi, \phi  \big\rangle \Big|^2  \le \E_{\PP_K} \Big(\big \|  (K^{[X, \, B]}  - \chi_{B^c} K \chi_{B^c}) \phi \big\|_2^2   \Big)=
\\
= \E_{\PP_K}\Big  ( \|   K^{[X, \, B]}  \phi\|_2^2   -  \langle K^{[X, \, B]}  \phi, \chi_{B^c} K \chi_{B^c} \phi \rangle - \langle \chi_{B^c} K \chi_{B^c} \phi, K^{[X, \, B]}  \phi\rangle + \|  \chi_{B^c} K \chi_{B^c} \phi \|_2^2  \Big)=
\\
 =   \E_{\PP_K}\Big  (  \langle K^{[X, \, B]}  \phi, \phi \rangle  -  \langle K^{[X, \, B]}  \phi, \chi_{B^c} K \chi_{B^c} \phi \rangle - \langle \chi_{B^c} K \chi_{B^c} \phi, K^{[X, \, B]}  \phi\rangle + \|  \chi_{B^c} K \chi_{B^c} \phi \|_2^2  \Big)=
\\
 = \langle  \chi_{B^c} K \chi_{B^c} \phi, \phi\rangle -  \|  \chi_{B^c} K \chi_{B^c} \phi \|_2^2 =  \langle K\phi, \phi\rangle  - \| \chi_{B^c} K\phi\|_2^2
 = \| K\phi\|_2^2 - \| \chi_{B^c} K\phi\|_2^2  = \| \chi_B K \phi\|_2^2.
\end{multline}
\end{proof}

\begin{proposition}\label{prop-tail-re-mart}
 Fix any $\ell \in\N$. Then $\big(\chi_{D_\ell}K^{[X, \,  E \setminus D_{n+\ell}]} \chi_{D_\ell}\big)_{n\in\N}$ is an $(\mathcal{F}(E\setminus D_{n+\ell}))_{n\in\N}$-adapted reverse martingale defined on the probability space $(\Conf(E), \mathcal{F}(E), \PP_K)$, and we have
\begin{align}\label{to-K}
 \chi_{D_\ell}K^{[X, \,  E \setminus D_{n+\ell}]} \chi_{D_\ell} \xrightarrow[\text{$\PP_K$-a.s. in $\mathscr{L}_1(L^2(E, \mu)$ and in $L^2(\PP_K; \mathscr{L}_1(L^2(E, \mu)))$}]{n\to\infty} \chi_{D_\ell} K \chi_{D_\ell}.
\end{align}
For any $\ell \in \N$, we have
\begin{align}\label{tail-marginal}
 \E_{\PP_K}\Big[ \PP_K(\cdot | X, E\setminus D_\ell) \Big|   \bigcap_{n=1}^\infty \mathcal{F}(E\setminus D_{n+\ell})\Big]  = (\pi_{D_\ell})_{*}(\PP_K), \quad \text{$\PP_K$-almost surely}
\end{align}
and, for any $\mathcal{A} \in \mathcal{F}(D_\ell)$, we have
\begin{align}\label{mart-event}
\lim_{n\to\infty} \E_{\PP_K} \Big| \E_{\PP_K}\big[\chi_{\mathcal{A}}   \big|\mathcal{F}(E\setminus D_{n+\ell}) \big]  - \PP_{K} (\mathcal{A})  \Big| = 0.
\end{align}
\end{proposition}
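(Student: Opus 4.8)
The plan is to derive all four assertions by specializing the reverse--martingale results of Proposition~\ref{prop-re-mart} to the decreasing sequence $W_n:=E\setminus D_{n+\ell}$ and combining them with the one--step identity (Proposition~\ref{prop-1-step}), the variance estimate (Lemma~\ref{lem-var}), and the convergence of conditional kernels. Since $D_\ell\subset D_{n+\ell}$, the sets $W_n=E\setminus D_{n+\ell}$ form a decreasing sequence contained in $E\setminus D_\ell$, so Proposition~\ref{prop-re-mart} (with $B=D_\ell$) gives the reverse--martingale property at once. Being a reverse martingale in the separable Banach space $\mathscr{L}_1(L^2(D_\ell,\mu))$, which has the Radon--Nikodym property by Proposition~\ref{prop-rnp}, the sequence converges $\PP_K$-a.s.\ and in $L^1(\PP_K;\mathscr{L}_1)$. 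To identify the limit I fix $\phi\in L^2(D_\ell,\mu)$ and study the bounded real reverse martingale $\langle K^{[X,\,W_n]}\phi,\phi\rangle$: using Definition~\ref{def-unbdd} to write $K^{[X,\,W_n]}$ as the $\mathscr{L}_{1,loc}$-limit of $\chi_{D_{n+\ell}}K^{[X,\,D_k\setminus D_{n+\ell}]}\chi_{D_{n+\ell}}$, and noting $\supp\phi\subset D_\ell$ is disjoint from $D_k\setminus D_{n+\ell}$, Proposition~\ref{prop-1-step} gives $\E_{\PP_K}\langle K^{[X,\,D_k\setminus D_{n+\ell}]}\phi,\phi\rangle=\langle K\phi,\phi\rangle$ and Lemma~\ref{lem-var} gives $\Var_{\PP_K}\langle K^{[X,\,D_k\setminus D_{n+\ell}]}\phi,\phi\rangle\le\|\phi\|_2^2\,\|\chi_{E\setminus D_{n+\ell}}K\phi\|_2^2$; letting $k\to\infty$ by bounded convergence these pass to $W_n$. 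Since $\chi_{E\setminus D_{n+\ell}}K\phi\to 0$ in $L^2$, the variance of $\langle K^{[X,\,W_n]}\phi,\phi\rangle$ tends to $0$, so its a.s.\ limit is the constant $\langle K\phi,\phi\rangle$; as $L^2(D_\ell,\mu)$ is separable and the operators are contractions, the a.s.\ $\mathscr{L}_1$-limit must be $\chi_{D_\ell}K\chi_{D_\ell}$. The $L^2(\PP_K;\mathscr{L}_1)$ convergence then follows from a.s.\ convergence together with the bound $\|\chi_{D_\ell}K^{[X,\,W_n]}\chi_{D_\ell}\|_{\mathscr{L}_1}=\tr(\chi_{D_\ell}K^{[X,\,W_n]}\chi_{D_\ell})=\E_{\PP_K}[\#_{D_\ell}\mid\mathcal{F}(E\setminus D_{n+\ell})]$, bounded in $L^2(\PP_K)$ because $\#_{D_\ell}\in L^2(\PP_K)$ for the determinantal process $\PP_K$ (equivalently, by Proposition~\ref{prop-L2}). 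This proves \eqref{to-K}.

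For \eqref{tail-marginal}, I use that for any bounded measurable $G$ on $\Conf(D_\ell)$ one has $\PP_K(\cdot\mid X,E\setminus D_\ell)(G)=\E_{\PP_K}[G(\,\cdot\,\cap D_\ell)\mid\mathcal{F}(E\setminus D_\ell)](X)$, and that $\mathcal{F}(E\setminus D_{n+\ell})\subset\mathcal{F}(E\setminus D_\ell)$; the tower property then yields the identity of random measures
\[
\E_{\PP_K}\big[\PP_K(\cdot\mid X,E\setminus D_\ell)\,\big|\,\mathcal{F}(E\setminus D_{n+\ell})\big]=(\pi_{D_\ell})_{*}\big[\PP_K(\cdot\mid X,E\setminus D_{n+\ell})\big]=\PP_{\chi_{D_\ell}K^{[X,\,E\setminus D_{n+\ell}]}\chi_{D_\ell}},
\]
the last step by Lemma~\ref{key-lem} together with the restriction rule $(\pi_{D_\ell})_{*}\PP_T=\PP_{\chi_{D_\ell}T\chi_{D_\ell}}$. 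Letting $n\to\infty$ and testing against a countable vaguely--continuous convergence--determining family of $G$'s, the left side converges a.s.\ to $\E_{\PP_K}[\PP_K(\cdot\mid X,E\setminus D_\ell)\mid\bigcap_n\mathcal{F}(E\setminus D_{n+\ell})](G)$ by the reverse--martingale theorem, while the right side converges to $\PP_{\chi_{D_\ell}K\chi_{D_\ell}}(G)$ by \eqref{to-K} and the continuity of $T\mapsto\PP_T$ on $\mathscr{L}_{1,loc}$. Hence the conditional expectation equals $\PP_{\chi_{D_\ell}K\chi_{D_\ell}}=(\pi_{D_\ell})_{*}(\PP_K)$ almost surely.

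For \eqref{mart-event}, given $\mathcal{A}\in\mathcal{F}(D_\ell)$ write $\chi_{\mathcal{A}}(X)=G(X\cap D_\ell)$. The same tower computation gives $\E_{\PP_K}[\chi_{\mathcal{A}}\mid\mathcal{F}(E\setminus D_{n+\ell})]=\PP_{\chi_{D_\ell}K^{[X,\,E\setminus D_{n+\ell}]}\chi_{D_\ell}}(G)$, a bounded, hence uniformly integrable, reverse martingale; by the reverse--martingale theorem it converges in $L^1(\PP_K)$ to $\E_{\PP_K}[\chi_{\mathcal{A}}\mid\bigcap_n\mathcal{F}(E\setminus D_{n+\ell})]=\E_{\PP_K}[\PP_K(\cdot\mid X,E\setminus D_\ell)(G)\mid\bigcap_n\mathcal{F}(E\setminus D_{n+\ell})]$, which by \eqref{tail-marginal} (the conditional expectation there being the genuine measure $(\pi_{D_\ell})_{*}(\PP_K)$, so it may be tested against the bounded measurable $G$) equals $(\pi_{D_\ell})_{*}(\PP_K)(G)=\PP_K(\mathcal{A})$. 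This is exactly \eqref{mart-event}.

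The reverse--martingale bookkeeping is routine; the step demanding genuine care is transferring Proposition~\ref{prop-1-step} and Lemma~\ref{lem-var}, both stated for \emph{bounded} conditioning sets, to the unbounded set $E\setminus D_{n+\ell}$ via the $\mathscr{L}_{1,loc}$-approximation of Definition~\ref{def-unbdd}, and correctly pinning down the reverse--martingale limit from the decay of the variance. One must also check that the measure--valued reverse martingale converges by reducing to a countable convergence--determining class on the Polish space $\Conf(D_\ell)$, and that $\mathscr{L}_1$-convergence of kernels in \eqref{to-K} indeed entails weak convergence of the associated determinantal laws.
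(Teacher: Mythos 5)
Your proof is correct and follows essentially the same route as the paper: the reverse-martingale property from Proposition \ref{prop-re-mart}, identification of the limit in \eqref{to-K} by computing the mean via Proposition \ref{prop-1-step} and killing the variance via Lemma \ref{lem-var} (both transferred to the unbounded conditioning sets through the bounded approximants of Definition \ref{def-unbdd}), and then the measure-valued and scalar reverse-martingale convergence for \eqref{tail-marginal} and \eqref{mart-event}. The only cosmetic difference is that you invoke the Radon--Nikodym property to get a.s.\ convergence of the operator-valued reverse martingale, whereas the paper uses the fact that Banach-space-valued \emph{reverse} martingales always converge (Pisier, p.~34); since you identify the limit independently by the variance argument, this changes nothing.
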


\begin{proof}
The reverse martingale property of the sequence follows from Proposition \ref{prop-re-mart}.
Set
\begin{align}\label{tail-sigma}
\mathscr{T} : =  \bigcap_{n=1}^\infty \mathcal{F}(E\setminus D_{n+\ell}).
\end{align}
Since a Banach space valued {\it reverse} martingale converges (see, e.g., Pisier \cite[p. 34]{pisier-B-martingale}), we obtain
\[
\chi_{D_\ell}K^{[X, \,  E \setminus D_{n+\ell}]} \chi_{D_\ell} \xrightarrow[\text{$\PP_K$-a.s. in $\mathscr{L}_1(L^2(E, \mu)$ and in $L^2(\PP_K; \mathscr{L}_1(L^2(E, \mu)))$}]{n\to\infty}   \E_{\PP_K}\big[\chi_{D_\ell}K^{[X, \,  E \setminus D_{1+\ell}]} \chi_{D_\ell}  \big|   \mathscr{T}\big].
\]
Set
$$
G_\infty(X)=
\E_{\PP_K}\big[\chi_{D_\ell}K^{[X, \,  E \setminus D_{1+\ell}]} \chi_{D_\ell}  \big|   \mathscr{T}\big].
$$
In particular, for any  $\phi \in L^2(D_\ell, \mu)$ with $\|\phi \|_2 \le 1$, we have
 \[
 \langle G_\infty(X)\phi, \phi \rangle  =  \E_{\PP_K} \Big[ \langle  K^{[X, \, E\setminus D_{1+\ell} ]} \phi, \phi\rangle  \Big| \mathscr{T} \Big], \quad \text{$\PP_K$-almost surely}.
  \]
 By Definition \ref{def-unbdd} and the inequality $\big| \langle  K^{[X, \, (E\setminus D_{n+\ell} ) \cap D_k]} \phi, \phi\rangle\big| \le 1$, which holds $\PP_K$-almost surely,
 for any $n\in\N$, we have
 \begin{align}\label{finite-inf}
\langle  K^{[X, \, (E\setminus D_{n+\ell} ) \cap D_k]} \phi, \phi\rangle  \xrightarrow{k\to\infty} \langle  K^{[X, \, E\setminus D_{n+\ell} ]} \phi, \phi\rangle, \quad \text{$\PP_K$-almost surely and in $L^2(\PP_K)$.}
 \end{align}
 Similarly,
  \begin{align}\label{inf-final}
 \langle  K^{[X, \, E\setminus D_{n+\ell} ]} \phi, \phi\rangle \xrightarrow{n\to\infty}  \langle G_\infty(X)\phi, \phi \rangle, \quad \text{$\PP_K$-almost surely and in $L^2(\PP_K)$.}
 \end{align}
In particular, since $(E\setminus D_{1+\ell} ) \cap D_k$ are bounded for all $k\in\N$, we can apply Proposition  \ref{prop-1-step} to obtain
\[
\E_{\PP_K}  \langle G_\infty(X)\phi, \phi \rangle  =   \E_{\PP_K} \big[ \langle  K^{[X, \, E\setminus D_{1+\ell} ]} \phi, \phi\rangle  \big] =\lim_{k \to\infty}  \E_{\PP_K} \big[ \langle  K^{[X, \, (E\setminus D_{1+\ell} ) \cap D_k]} \phi, \phi\rangle \big]  = \langle K \phi, \phi\rangle.
\]
Now by Lemma \ref{lem-var}, we have
\[
\Var_{\PP_K}\Big(\langle  K^{[X, \, (E\setminus D_{n+\ell} ) \cap D_k]} \phi, \phi\rangle \Big) \le  \|\chi_{(E\setminus D_{n+\ell} ) \cap D_k} K \phi\|_2^2 \le \|\chi_{E\setminus D_{n+\ell} } K \phi\|_2^2.
\]
 The convergence \eqref{finite-inf}, \eqref{inf-final} yields
 \begin{align*}
& \Var_{\PP_K}\Big(\langle  K^{[X, \, E\setminus D_{n+\ell} ]} \phi, \phi\rangle \Big) = \lim_{k \to\infty} \Var_{\PP_K}\Big(\langle  K^{[X, \, (E\setminus D_{n+\ell} ) \cap D_k]} \phi, \phi\rangle \Big)  \le  \|\chi_{E\setminus D_{n+\ell} } K \phi\|_2^2;
 \\
 & \Var_{\PP_K}\Big(  \langle G_\infty(X)\phi, \phi \rangle   \Big)  = \lim_{n\to\infty} \Var_{\PP_K}\Big(\langle  K^{[X, \, E\setminus D_{n+\ell} ]} \phi, \phi\rangle \Big)  \le \limsup_{n\to\infty}   \|\chi_{E\setminus D_{n+\ell} } K \phi\|_2^2=0.
 \end{align*}
 Consequently,  we have $\langle G_\infty(X)\phi, \phi \rangle = \langle  K \phi, \phi\rangle, \PP_K$-almost surely.
Since $ \chi_{D_\ell} G_\infty(X) \chi_{D_\ell} = G_\infty(X)$ and since $\phi$ is arbitrarily chosen from the separable unit sphere in $L^2(D_\ell, \mu)$, we obtain the desired equality
\[
G_\infty(X) = \chi_{D_\ell} K \chi_{D_\ell}, \quad \text{$\PP_K$-almost surely}.
\]

Finally, Proposition \ref{prop-mart-measure} implies that
\[
(\pi_{D_\ell})_{*}[ \PP_K(\cdot| X, E\setminus D_{n+\ell} )]= \E_{\PP_K}\Big[ \PP_K(\cdot | X, E\setminus D_\ell) \Big| \mathcal{F}(E\setminus D_{n+\ell})\Big],  \quad \text{$\PP_K$-almost surely,}
\]
and
\begin{align}\label{tv-conv}
(\pi_{D_\ell})_{*}[ \PP_K(\cdot| X, E\setminus D_{n+\ell} )] \xrightarrow[\text{weakly}]{n\to\infty}  \E_{\PP_K}\Big[ \PP_K(\cdot | X, E\setminus D_\ell) \Big| \mathscr{T}\Big], \quad \text{$\PP_K$-almost surely}.
\end{align}
But the convergence \eqref{to-K} implies that
\begin{align}\label{w-conv}
(\pi_{D_\ell})_{*}[ \PP_K(\cdot| X, E\setminus D_{n+\ell} )]=   \PP_{\chi_{D_\ell} K^{[X, \, E\setminus D_{n+\ell}]} \chi_{D_\ell}  } \xrightarrow[weakly]{n\to\infty}  \PP_{\chi_{D_\ell}  K \chi_{D_\ell} } = (\pi_{D_\ell})_{*}(\PP_K), \quad \text{$\PP_K$-almost surely}.
\end{align}
Now \eqref{tv-conv} and \eqref{w-conv}  yield  \eqref{tail-marginal}.
Martingale convergence  for a bounded random variable  implies  \eqref{mart-event}.
\end{proof}

\begin{proof}[Proof of Theorem \ref{main-thm3}]
Take $D_n: = B_n$.  We prove that the $\sigma$-algebra $\mathscr{T}$ in \eqref{tail-sigma} is trivial with respect to $\PP_K$.  Take an event $\mathcal{A} \in \mathscr{T}$.  For $\varepsilon >0$, find  $\ell \in \N$ large enough and $\mathcal{A}_1 \in \mathcal{F}(D_\ell)$ such that $\PP_K (\mathcal{A}_1 \Delta \mathcal{A}) < \varepsilon/3$.
By \eqref{mart-event}, we have
\[
\lim_{n\to\infty} \E_{\PP_K} \Big| \E_{\PP_K}\big[\chi_{\mathcal{A}_1}   \big|\mathcal{F}(E\setminus D_{n+\ell}) \big]  - \PP_{K} (\mathcal{A}_1)  \Big| = 0.
\]
 Now find $n \in \N$ large enough in such a way that
\begin{align*}
\E_{\PP_K} \Big| \E_{\PP_K}\big[\chi_{\mathcal{A}_1}   \big|\mathcal{F}(E\setminus D_{n+\ell}) \big]  - \PP_{K} (\mathcal{A}_1)  \Big|  \le \varepsilon/3.
\end{align*}
It follows that  for any $\mathcal{A}_2 \in \mathcal{F} (E\setminus D_{n+\ell})$, we have
\begin{multline}
 | \PP_K(\mathcal{A}_1 \cap \mathcal{A}_2)  - \PP_K(\mathcal{A}_1) \PP_K(\mathcal{A}_2) | = \Big| \E_{\PP_K}    \Big(  \chi_{\mathcal{A}_2} \E_{\PP_K}\big[\chi_{\mathcal{A}_1}   \big|\mathcal{F}(E\setminus D_{n+\ell}) \big]    \Big)   -   \E_{\PP_K}    \Big(  \chi_{\mathcal{A}_2}  \PP_{K} (\mathcal{A}_1)     \Big)   \Big|=
 \\
 = \Big| \E_{\PP_K}    \Big(  \chi_{\mathcal{A}_2} \Big[\E_{\PP_K}\big[\chi_{\mathcal{A}_1}   \big|\mathcal{F}(E\setminus D_{n+\ell}) \big]   - \PP_{K} (\mathcal{A}_1) \Big]   \Big)     \Big|
  \le \E_{\PP_K} \Big(\Big| \E_{\PP_K}\big[\chi_{\mathcal{A}_1}   \big|\mathcal{F}(E\setminus D_{n+\ell}) \big]  - \PP_{K} (\mathcal{A}_1)  \Big|\Big)  \le \varepsilon/3.
\end{multline}
Finally, we obtain
\begin{align*}
 | \PP_K(\mathcal{A} \cap \mathcal{A}_2)  - \PP_K(\mathcal{A}) \PP_K(\mathcal{A}_2) | \le 2  \PP_K (\mathcal{A}_1 \Delta \mathcal{A})  +   | \PP_K(\mathcal{A}_1 \cap \mathcal{A}_2)  - \PP_K(\mathcal{A}_1) \PP_K(\mathcal{A}_2) | \le \varepsilon.
\end{align*}
Taking $\mathcal{A}_2 = \mathcal{A}$, we obtain $\PP_K(\mathcal{A}) = (\PP_K(\mathcal{A}))^2$, whence $\PP_K(\mathcal{A})$ is either $0$ or $1$, as desired.
\end{proof}

\begin{proof}[Proof of Lemma \ref{lem-var} in the general case]

Fix a bounded Borel subset $B\subset E$ and a function $\phi \in L^2(E\setminus B, \mu)$ such that
$
\| \phi\|_2 = 1.
$
 Recalling \eqref{def-R}, set
\[
R(K, B, \phi) =  (\phi \otimes \overline{\phi} + \chi_B) K (\phi \otimes \overline{\phi} + \chi_B).
\]
By Lemma \ref{main-local},
\[
\langle R(K, B, \phi) ^{[X, \, B]} \phi, \phi \rangle =\langle K^{[X, \, B]} \phi, \phi \rangle, \quad \text{for $\PP_K$-almost every $X\in \Conf(E)$.}
\]
By definition, we have $K^{[X, \, B]} =K^{[X\cap B, \, B]}$ and similarly $R(K, B, \phi) ^{[X, \, B]}= R(K, B, \phi) ^{[X\cap B, \, B]}$. In particular, we have
\[
\langle R(K, B, \phi) ^{[X, \, B]} \phi, \phi \rangle =\langle K^{[X, \, B]} \phi, \phi \rangle \quad \text{for $(\pi_B)_{*}(\PP_K) = \PP_{\chi_B K \chi_B}$-almost every $X\in \Conf(B)$};
\]
\begin{align}\label{many-var}
\Var_{\PP_{K}} \big[\langle K^{[X, \, B]} \phi, \phi \rangle \big] = \Var_{\PP_{\chi_B K \chi_B}} \big[\langle K^{[X, \, B]} \phi, \phi \rangle \big] =  \Var_{\PP_{\chi_B R(K, B, \phi)  \chi_B}}  \langle R(K, B, \phi) ^{[X, \, B]} \phi, \phi \rangle.
\end{align}

\begin{proposition}[{See \cite[Section 3.3]{Lyons-ICM}}]\label{claim-dilation}
Let ${\bf m}$ be the counting measure on $\N$. There  exists a locally trace class orthogonal projection operator $\widetilde{K} \in \mathscr{L}_{1, loc}(L^2(E\sqcup \mathbb{N}, \mu \oplus {\bf m}))$ such that $K = \chi_E \widetilde{K} \chi_E$.
\end{proposition}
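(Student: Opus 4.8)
The plan is to take for $\widetilde{K}$ the classical $2\times 2$ unitary dilation of the positive contraction $K$ into an orthogonal projection, after identifying the auxiliary Hilbert space summand with $L^2(\N,\mathbf{m})$ by separability. First I would fix an isometry $U\colon L^2(E,\mu)\to L^2(\N,\mathbf{m})$ --- this exists since $L^2(E,\mu)$ is separable and $L^2(\N,\mathbf{m})=\ell^2(\N)$ is infinite-dimensional --- so that $U^{*}U=\mathrm{Id}$. Since $\spec(K)\subset[0,1]$, the operator $K(1-K)$ is a positive contraction, and $(K(1-K))^{1/2}$ is a well-defined bounded self-adjoint operator on $L^2(E,\mu)$ commuting with both $K$ and $1-K$ by the functional calculus. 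Identifying $L^2(E\sqcup\N,\mu\oplus\mathbf{m})$ with $L^2(E,\mu)\oplus L^2(\N,\mathbf{m})$, I would then set
\[
\widetilde{K}:=\begin{pmatrix} K & (K(1-K))^{1/2}U^{*}\\ U(K(1-K))^{1/2} & U(1-K)U^{*}\end{pmatrix}.
\]

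The operator $\widetilde{K}$ is self-adjoint by inspection, and the relation $\widetilde{K}^{2}=\widetilde{K}$ reduces, using $U^{*}U=\mathrm{Id}$ and the mutual commutation of $K$, $1-K$ and $(K(1-K))^{1/2}$, to the four scalar-type identities $K^{2}+K(1-K)=K$, $(K+(1-K))(K(1-K))^{1/2}=(K(1-K))^{1/2}$, the adjoint of the latter, and $K(1-K)+(1-K)^{2}=1-K$, all immediate from the functional calculus. Hence $\widetilde{K}$ is an orthogonal projection; in particular it is a contraction and $\spec(\widetilde{K})\subset\{0,1\}\subset[0,1]$. Since $\chi_{E}$ is precisely the orthogonal projection onto the first summand $L^2(E,\mu)$, reading off the $(1,1)$-block of the matrix gives $\chi_{E}\widetilde{K}\chi_{E}=K$ at once.

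It then remains to verify that $\widetilde{K}$ is locally trace class. For this I would first equip $E\sqcup\N$ with a metric compatible with the disjoint-union topology for which bounded sets are relatively compact, placing the points of $\N$ so that they escape to infinity; this is elementary (cf.\ \cite[Section~3.3]{Lyons-ICM}), and it forces every bounded Borel set $B\subset E\sqcup\N$ to decompose as $B=B_{E}\sqcup F$ with $B_{E}$ bounded in $E$ and $F\subset\N$ finite. For such a $B$ the compression $\chi_{B}\widetilde{K}\chi_{B}$ is the $2\times 2$ block operator with diagonal entries $\chi_{B_{E}}K\chi_{B_{E}}$ and $\chi_{F}U(1-K)U^{*}\chi_{F}$ and off-diagonal entries $\chi_{B_{E}}(K(1-K))^{1/2}U^{*}\chi_{F}$ and its adjoint. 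The first diagonal block is trace class because $K$ is locally trace class; the remaining three blocks all contain the finite-rank projection $\chi_{F}$ (recall $F$ is finite) and are therefore finite-rank, hence trace class. Consequently $\chi_{B}\widetilde{K}\chi_{B}\in\mathscr{L}_{1}$ for every bounded $B$, that is, $\widetilde{K}\in\mathscr{L}_{1,loc}(L^2(E\sqcup\N,\mu\oplus\mathbf{m}))$, which is the assertion.

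The dilation itself is classical, so there is no genuine obstacle; the only step requiring any care is the arrangement of the metric and phase-space structure on $E\sqcup\N$ in the last paragraph, since it is exactly this that keeps the blocks of $\widetilde{K}$ involving $\N$ of finite rank and hence leaves the local-trace-class property intact.
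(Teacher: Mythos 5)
Your proof is correct and follows essentially the same route as the paper: the canonical $2\times 2$ projection dilation of the positive contraction $K$, with the auxiliary summand transported onto $\ell^2(\N)=L^2(\N,\mathbf{m})$ via an isometry so that bounded (hence finite) subsets of $\N$ yield finite-rank compressions. You merely spell out the idempotency computation and the local trace-class verification that the paper leaves implicit, and your use of an isometry $U$ with $U^{*}U=\mathrm{Id}$ in place of the paper's unitary is an inessential (and slightly cleaner) variation.
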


\begin{proof}
The canonical orthogonal projection dilation of $K$ on $L^2(E, \mu) \oplus L^2(E, \mu)$ is given by the formula
\[
\left[
\begin{array}{cc}
K & \sqrt{K - K^2}
\\
\sqrt{K-K^2} & 1-K
\end{array}
\right],
\]
but it is not in general locally trace class.
Since $L^2(E, \mu)$ is  separable  and all infinite dimensional separable Hilbert spaces are isometrically isomorphic,  there exists a unitary operator $U: L^2(E, \mu) \rightarrow  \ell^2(\N) = L^2(\N, {\bf m})$, and we set
\begin{align*}
\widetilde{K}: =
\left[
\begin{array}{cc}
1  & 0
\\
0 & U^{-1}
\end{array}
\right]
\left[
\begin{array}{cc}
K & \sqrt{K - K^2}
\\
\sqrt{K-K^2} & 1-K
\end{array}
\right]
\left[
\begin{array}{cc}
1  & 0
\\
0 & U
\end{array}
\right] .
\end{align*}
\end{proof}

Since $\widetilde{K}$ is an orthogonal projection,  for any bounded Borel subset $B\subset  E$, which is of course also a subset of $E\sqcup \N$,  and any $\phi \in L^2(  E\setminus B, \mu)$, which  of course also lies in $L^2((E\sqcup \N) \setminus B, \mu \oplus {\bf m})$, we have
\begin{align*}
\Var_{\PP_{\widetilde{K}}} \big[\langle \widetilde{K}^{[X, \, B]} \phi, \phi \rangle \big] \le  \|\chi_{B}  \widetilde{K} \phi \|_2^2.
\end{align*}
For the term on the right hand side, we have
\begin{align}\label{eq-f}
\chi_{B}  \widetilde{K} \phi= \chi_B K \phi.
\end{align}
Since  $\phi \otimes \overline{\phi} + \chi_B = (\phi \otimes \overline{\phi} + \chi_B)\chi_E$, we have
\[
R(\widetilde{K}, B, \phi) =  (\phi \otimes \overline{\phi} + \chi_B) \widetilde{K} (\phi \otimes \overline{\phi} + \chi_B) =  (\phi \otimes \overline{\phi} + \chi_B) K (\phi \otimes \overline{\phi} + \chi_B) = R(K, B, \phi).
\]
It follows that
\[
\langle \widetilde{K}^{[X, \, B]} \phi, \phi \rangle   \xlongequal{\text{$\PP_{\chi_B \widetilde{K} \chi_B}$-a.s.}} \langle R(\widetilde{K}, B, \phi) ^{[X, \, B]} \phi, \phi \rangle  =  \langle R(K, B, \phi) ^{[X, \, B]} \phi, \phi \rangle     \xlongequal{\text{$\PP_{\chi_B K \chi_B}$-a.s.}} \langle K^{[X, \, B]} \phi, \phi \rangle.
\]
The equality $\chi_B \widetilde{K}\chi_B =\chi_B K \chi_B$ implies  the equality $\PP_{\chi_B \widetilde{K} \chi_B} = \PP_{\chi_B {K} \chi_B}$ , and we have
\begin{align}\label{eq-var}
\Var_{\PP_{K}} \big[\langle K^{[X, \, B]} \phi, \phi \rangle \big]  = \Var_{\PP_{\chi_B K\chi_B}} \big[\langle K^{[X, \, B]} \phi, \phi \rangle \big]  = \Var_{\PP_{ \chi_B \widetilde{K}
\chi_B}} \big[\langle \widetilde{K}^{[X, \, B]} \phi, \phi \rangle \big] = \Var_{\PP_{\widetilde{K}}} \big[\langle \widetilde{K}^{[X, \, B]} \phi, \phi \rangle \big].
\end{align}
Combining \eqref{eq-f} and \eqref{eq-var}, we obtain the desired inequality  \eqref{cont-var}.
\end{proof}

\section{Martingales corresponding to conditional processes}

\begin{proposition}\label{prop-bdd-cond}
Let $B\subset E$ be a bounded Borel subset. If $\PP$ is a simple point process on $E$ admitting correlation  measures of all orders, then
$  \PP(\cdot | X, B) = \overline{\PP^{X\cap B}\restriction_{\Conf(B^c)}}$ for $\PP$-almost every $X \in \Conf(E)$.
\end{proposition}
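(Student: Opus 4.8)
The plan is to verify directly the defining property of the Rohlin disintegration of $\PP$ along the forgetting map $\pi_B\colon X\mapsto X\cap B$. Identifying $\Conf(E)$ with $\Conf(B)\times\Conf(B^{c})$ via $X\mapsto(X\cap B,\,X\cap B^{c})$, as in \S\ref{sec-conditional}, it suffices to test against product functions $G(X\cap B)\,h(X\cap B^{c})$ with $G\colon\Conf(B)\to\R^{+}$ and $h\colon\Conf(B^{c})\to\R^{+}$ bounded Borel; and since the events $\{\#_B=n\}$, $n\ge 0$, partition $\Conf(E)$, to work on each of them separately. On $\{\#_B=n\}$ I will exhibit the restriction of $(\pi_B)_{*}\PP$ explicitly in terms of the $n$-th correlation measure $\rho_{n,\PP}$ and of $\PP^{X\cap B}(\Conf(B^{c}))$, and will show that the disintegrating kernel is $\overline{\PP^{X\cap B}\restriction_{\Conf(B^{c})}}$; summing over $n$ then yields the assertion, since $\PP(\cdot\,|\,X,B)=\PP(\cdot\,|\,X\cap B,B)$.

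First I would substitute into the defining identity of the reduced Campbell measure $\mathscr{C}_{n,\PP}^{!}$, followed by its disintegration \eqref{def-Palm} into Palm measures, the test function
\[
F(x_1,\dots,x_n,Y):=\ch\big[x_1,\dots,x_n\in B\big]\,\ch\big[Y\cap B=\emptyset\big]\,G(\{x_1,\dots,x_n\})\,h(Y).
\]
For a fixed simple $X$, an ordered $n$-tuple $(x_1,\dots,x_n)$ of distinct points of $X$ contributes to $\sum^{\#}$ exactly when $\{x_1,\dots,x_n\}=X\cap B$, which forces $\#(X\cap B)=n$; there are then $n!$ such tuples, each with $X\setminus\{x_1,\dots,x_n\}=X\cap B^{c}$. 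Hence the configuration side of the Campbell identity equals $n!\,\E_\PP\big[\ch_{\{\#_B=n\}}\,G(X\cap B)\,h(X\cap B^{c})\big]$, while the Palm side, using that $\rho_{n,\PP}$ charges no diagonal since $\PP$ is simple, equals $\int_{B^{n}}G(\{x_i\})\big(\int_{\Conf(B^{c})}h\,\dd(\PP^{x}\restriction_{\Conf(B^{c})})\big)\,\rho_{n,\PP}(\dd x)$. Dividing by $n!$ gives
\begin{equation}\label{pp-key}
\E_\PP\big[\ch_{\{\#_B=n\}}\,G(X\cap B)\,h(X\cap B^{c})\big]=\frac{1}{n!}\int_{B^{n}}G(\{x_i\})\Big(\int_{\Conf(B^{c})}h\,\dd(\PP^{x}\restriction_{\Conf(B^{c})})\Big)\,\rho_{n,\PP}(\dd x).
\end{equation}

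Next I would push the measure $\tfrac1{n!}\,\rho_{n,\PP}\restriction_{B^{n}}$ forward under the symmetrisation map $(x_1,\dots,x_n)\mapsto\{x_1,\dots,x_n\}$ to a $\sigma$-finite measure $\lambda_n$ on $\{X_0\in\Conf(B):\#X_0=n\}$. By the permutation-invariance of $\PP^{x_1,\dots,x_n}$ the inner integral in \eqref{pp-key} depends only on the set $\{x_i\}$, so \eqref{pp-key} becomes $\E_\PP[\ch_{\{\#_B=n\}}G(X\cap B)h(X\cap B^{c})]=\int G(X_0)\big(\int_{\Conf(B^{c})}h\,\dd(\PP^{X_0}\restriction_{\Conf(B^{c})})\big)\,\lambda_n(\dd X_0)$. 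Taking $h\equiv 1$ identifies the restriction of $(\pi_B)_{*}\PP$ to $\{\#_B=n\}$ as the measure $\PP^{X_0}(\Conf(B^{c}))\,\lambda_n(\dd X_0)$; in particular $\PP^{X_0}(\Conf(B^{c}))>0$ for $(\pi_B)_{*}\PP$-a.e. $X_0$, so $\overline{\PP^{X_0}\restriction_{\Conf(B^{c})}}$ from \eqref{nor-res} is, $(\pi_B)_{*}\PP$-a.e., an honest probability measure. Substituting this expression for $(\pi_B)_{*}\PP$ into $\E_\PP[\ch_{\{\#_B=n\}}G(X\cap B)h(X\cap B^{c})]=\int_{\{\#_B=n\}}G(X_0)\big(\int h\,\dd\,\PP(\cdot\,|\,X_0,B)\big)\,\dd(\pi_B)_{*}\PP(X_0)$ and comparing with the general-$h$ form above, the freedom in $G$ forces, $\lambda_n$-a.e., the scalar identity $\PP^{X_0}(\Conf(B^{c}))\int h\,\dd\,\PP(\cdot\,|\,X_0,B)=\int h\,\dd(\PP^{X_0}\restriction_{\Conf(B^{c})})$ for every $h$ in a fixed countable measure-determining family on the Polish space $\Conf(B^{c})$; hence $\PP(\cdot\,|\,X_0,B)=\overline{\PP^{X_0}\restriction_{\Conf(B^{c})}}$ for $(\pi_B)_{*}\PP$-a.e. $X_0$. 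Summing over $n$ completes the proof; the case $n=0$ is the trivial one, with $\PP^{\emptyset}=\PP$.

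The real work here is measure-theoretic bookkeeping rather than a hard estimate. The points that need care are: (i) the fact that $\rho_{n,\PP}$ is carried by the off-diagonal of $E^{n}$ for a simple point process, so that the set-valued quantities $G(\{x_i\})$ and $\PP^{\{x_i\}}$ are $\rho_{n,\PP}$-a.e.\ meaningful; (ii) the measurability and well-definedness of $\lambda_n$ and of the descended kernel $X_0\mapsto\PP^{X_0}\restriction_{\Conf(B^{c})}$ under the $n!$-to-one symmetrisation map; and, above all, (iii) the correct use of the uniqueness of Rohlin disintegrations to pass from the family of product-test-function identities to an almost-everywhere equality of measures, while tracking the $\PP$-null set on which $\PP^{X\cap B}(\Conf(B^{c}))$ may vanish, so that convention \eqref{nor-res} applies there only vacuously.
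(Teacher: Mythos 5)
Your proof is correct and follows essentially the same route as the paper's: both evaluate the reduced Campbell measure on test functions supported over $\Conf_n(B)\times\Conf(B^c)$, use simplicity to count the $n!$ ordered tuples realising $X\cap B$, symmetrise $\tfrac1{n!}\rho_{n,\PP}\restriction_{B^n}$ onto $\Conf_n(B)$, and then read off the marginal and the disintegrating kernel from the resulting product formula. The only difference is presentational (ordered tuples symmetrised at the end versus the pushed-forward measures $\rho^\#_{n,\PP}$, $\mathscr{C}^\#_{n,\PP}$ introduced at the outset), so no further comparison is needed.
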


\begin{proof}
Let $\Conf_n(E) =\{X \in \Conf(E): \# X = n\}$ and similarly define $\Conf_n(B)$. By the natural map $E^n \rightarrow \Conf_n(E)$ defined by
$(x_1, \cdots, x_n) \mapsto  \{x_1, \cdots, x_n\}$,
we define a measure $\rho^\#_{n, \PP}$ on $\Conf_n(E)$  as the push-forward measure of  the correlation measure $\rho_{n, \PP}$ and define a $\sigma$-finite measure $\mathscr{C}_{n, \PP}^\#$  on $\Conf_n(E)\times \Conf(E)$ as the push-forward measure of $n$-th order Campbell measure $\mathscr{C}_{n, \PP}^{!}$. The formula \eqref{def-Palm}  implies that
\begin{align}\label{dis-campbell}
\mathscr{C}_{n, \PP}^\# (\dd  \mathfrak{p} \times \dd X_1) = \rho_{n, \PP}^\#(\dd \mathfrak{p})  \PP^{\mathfrak{p}} ( \dd  X_1).
\end{align}
By convention,  we set $ \rho_{0, \PP}^\#(\dd \mathfrak{p}) :=  \delta_{\emptyset}$ and
   $C_{0, \PP}^\#:= \delta_\emptyset \otimes\PP$, where $\delta_\emptyset$ is the Dirac measure at the empty configuration $\emptyset$, i.e.,  the unique element $\emptyset \in \Conf_0(E)$.
  Equivalently, for any positive Borel function $H: \Conf_n(E) \times \Conf(E)\rightarrow\R^{+}$:
\begin{align*}
\int_{\Conf_n(E) \times \Conf(E)}   H(\mathfrak{p}, X_1)  \mathscr{C}_{n, \PP}^\# (\dd X_0 \times \dd X_1) =  \int_{\Conf(E)}   \Big[  {\sum_{x \in X^n}}\!\!\!^{\#}   H(\{x_1, \cdots, x_n\},  X\setminus \{x_1, \cdots, x_n\}) \Big]  \PP(\dd  X),
\end{align*}
where the summation ${\sum\!^{\#}}$ is taken over all {\it ordered $n$-tuples} $(x_1, \cdots, x_n)$ with distinct coordinates $x_1, \cdots, x_n \in X$. In particular, when $n=0$, this equality reads as: for any $H: \Conf_0(E) \times \Conf(E)\rightarrow\R^{+}$, we have
\begin{align*}
\int_{\Conf_0(E) \times \Conf(E)}   H(\mathfrak{p}, X_1)  \mathscr{C}_{0, \PP}^\# (\dd \mathfrak{p} \times \dd X_1) =  \int_{\Conf(E)}    H(\emptyset, X) \PP(\dd X).
\end{align*}

The boundedness of  $B\subset E$ implies that $\Conf(B) = \bigsqcup_{n=0}^\infty \Conf_n(B)$.  Hence
\[
\Conf(E)  \simeq \Conf(B) \times \Conf(B^c) =  \Big(  \bigsqcup_{n=0}^\infty \Conf_n(B) \Big)\times \Conf(B^c) =   \bigsqcup_{n=0}^\infty    \Big(   \Conf_n(B) \times \Conf(B^c)  \Big).
\]
For any $n =0, 1, 2, \cdots$, let $H: \Conf_n(E) \times \Conf(E)\rightarrow \R^{+}$ be any non-negative Borel function supported on the subset $\Conf_n(B) \times \Conf(B^c) \subset \Conf_n(E) \times \Conf(E)$. Then  for any configuration $X \in \Conf(E)$, we have
\[
{\sum_{x \in X^n}}\!\!\!^{\#}   H(\{x_1, \cdots, x_n\},  X\setminus \{x_1, \cdots, x_n\})  =   n!  \cdot  \chi_{\{\#(X \cap B) = n \}}  \cdot H( X \cap B,  X \cap B^c).
\]
When $n=0$, this equality reads as $H(\emptyset, X)  =  \chi_{\{X \cap B = \emptyset \}}  \cdot H(X\cap B,  X \cap B^c)$.
By definition of $\mathscr{C}_{n, \PP}^\#$, we get
\begin{align*}
&\int_{\Conf_n(E) \times \Conf(E)}   H(\mathfrak{p}, X_1)  \mathscr{C}_{n, \PP}^\# (\dd \mathfrak{p} \times \dd X_1)  = \int_{\Conf(E)}   \Big[{\sum_{x \in X^n}}\!\!\!^{\#}   H(\{x_1, \cdots, x_n\},  X\setminus \{x_1, \cdots, x_n\}) \Big]  \PP(\dd  X)
\\
& = n!\cdot  \int_{\Conf(E)}      \chi_{\{\# (X\cap B) = n \}}  \cdot H(X \cap B, X\cap B^c)  \PP(\dd X) =  n!\cdot  \int_{\Conf_n(B) \times \Conf(B^c)} H(\mathfrak{p}, X_1)   \PP_{B, B^c}(\dd \mathfrak{p} \times \dd X_1).
\end{align*}
The above equality, combined  with \eqref{dis-campbell}, yields
\begin{align*}
& \PP_{B, B^c}\restriction_{\Conf_n(B) \times \Conf(B^c)}  (\dd \mathfrak{p} \times \dd X_1)  = \frac{1}{n!}\mathscr{C}_{n, \PP}^\#\restriction_{\Conf_n(B) \times \Conf(B^c)}  (\dd \mathfrak{p} \times \dd X_1)
\\
& = \frac{1}{n!} \rho_{n, \PP}^\# \restriction_{\Conf(B)}(\dd \mathfrak{p})  \PP^{\mathfrak{p}}\restriction_{\Conf(B^c)} ( \dd  X_1) =  \frac{\PP^{\mathfrak{p}}(\Conf(B^c))  }{n!} \rho_{n, \PP}^\# \restriction_{\Conf(B)}(\dd \mathfrak{p})  \overline{\PP^{\mathfrak{p}}\restriction_{\Conf(B^c)}} ( \dd  X_1).
\end{align*}
Consequently,
\[
\PP_{B, B^c} (\dd \mathfrak{p} \times \dd X_1)  =\Big(  \sum_{n=0}^\infty  \frac{\PP^{\mathfrak{p}}(\Conf(B^c))  }{n!} \rho_{n, \PP}^\# \restriction_{\Conf(B)}(\dd \mathfrak{p})  \Big)\overline{\PP^{\mathfrak{p}}\restriction_{\Conf(B^c)}} ( \dd  X_1).
\]
This implies both the formula for  $\pi_B(\PP)(\dd \mathfrak{p})$ and the formula for $\PP(\dd X_1| \mathfrak{p}, B) = \PP_{B, B^c}(\dd X_1 | \mathfrak{p}, B)$:
\begin{align}
\pi_B(\PP) (\dd \mathfrak{p})  &=  \sum_{n=0}^\infty  \frac{\PP^{\mathfrak{p}}(\Conf(B^c))  }{n!} \rho_{n, \PP}^\#\restriction_{\Conf(B)}(\dd \mathfrak{p}) ; \label{restriction-correlation}
\\
  \PP(\dd X_1 |\mathfrak{p}, B) &= \overline{\PP^{\mathfrak{p}}\restriction_{\Conf(B^c)}} ( \dd  X_1), \quad \text{for $\pi_B(\PP)$-almost every $\mathfrak{p} \in \Conf(B)$}.
\end{align}
Hence we get the desired relation $\PP(\cdot |X, B) = \overline{\PP^{X \cap B}\restriction_{\Conf(B^c)}}$, for $\PP$-almost every $X \in \Conf(E)$.
\end{proof}

\begin{remark*}
Kallenberg \cite[Section 12.3]{Kallenberg} defined the {\it compound Campbell} measure of $\PP$  on $\Conf_{fin}(E)\times \Conf(E)$ by
\[
\mathscr{C}_\PP^\# (\dd \mathfrak{p} \times \dd X_1): =\sum_{n=0}^\infty \frac{1}{n!}\mathscr{C}_{n, \PP}^\#(\dd \mathfrak{p} \times \dd X_1),
\]
where $\Conf_{fin}(E) = \sqcup_{n=0}^\infty \Conf_{n}(E)$.
\end{remark*}

Let $\PP$ be a point process on $E$ and let $W \subset E$ be a Borel subset of $E$.
Let $W_1 \subset \cdots \subset W_n \subset \cdots \subset W$ be an increasing sequence of Borel subsets of $W$ such that $W  = \bigcup_{n=1}^\infty W_n.$

\begin{proposition}\label{prop-mart-measure}

  The sequence $\big(  ( \pi_{W^c})_{*}[\PP(\cdot| X, W_n)]\big)_{n\in\N}$ is an $(\mathcal{F}(W_n))_{n\in\N}$-adapted martingale defined on the probability space $(\Conf(E), \mathcal{F}(E), \PP)$. Moreover, we have
\begin{align}\label{cond-meas-mart}
 (\pi_{W^c})_{*}[\PP(\cdot |X,  W_n)] = \E_{\PP}\Big[\PP(\cdot| X, W) \Big| \mathcal{F}(W_n) \Big], \quad \text{for $\PP$-almost every $X\in \Conf(E)$.}
\end{align}
In particular, by martingale convergence theorem,  for all Borel subsets $\mathcal{A} \subset \Conf(W^c)$ and any $1\le p < \infty$,   we have
\begin{align}\label{cond-mart}
\Big(( \pi_{W^c})_{*}[\PP(\cdot |X,  W_n)] \Big)  (\mathcal{A})  \xrightarrow[\text{$\PP$-a.s. and in $L^p(\Conf(E), \PP)$} ]{n\to\infty} \PP (\mathcal{A}| X, W).
\end{align}
Moreover,  for $\PP$-almost every $X\in \Conf(E)$, we have
\begin{align}\label{cond-mart-weak}
(\pi_{W^c})_{*}[\PP(\cdot |X,  W_n)]   \xrightarrow[\text{weakly}]{n\to\infty} \PP (\cdot | X, W).
\end{align}
\end{proposition}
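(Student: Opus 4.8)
The plan is to reduce the whole statement to the tower property of conditional expectations, applied to events and bounded functions that are $\mathcal F(W^c)$-measurable, and then to invoke standard martingale convergence. First I would observe that for any Borel $\mathcal A\subset\Conf(W^c)$ the set $\pi_{W^c}^{-1}(\mathcal A)=\{Y\in\Conf(E):Y\cap W^c\in\mathcal A\}$ is $\mathcal F(W^c)$-measurable, so that, directly from the definition of the conditional measures in \S\ref{sec-conditional},
\[
\big((\pi_{W^c})_*[\PP(\cdot|X,W_n)]\big)(\mathcal A)=\PP\big(\pi_{W^c}^{-1}(\mathcal A)\,\big|\,X,W_n\big)=\E_\PP\big[\chi_{\pi_{W^c}^{-1}(\mathcal A)}\,\big|\,\mathcal F(W_n)\big](X)
\]
for $\PP$-almost every $X$, and likewise $\PP(\mathcal A|X,W)=\E_\PP[\chi_{\pi_{W^c}^{-1}(\mathcal A)}|\mathcal F(W)](X)$. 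Since $\mathcal F(W_1)\subset\mathcal F(W_2)\subset\cdots\subset\mathcal F(W)$, the identity \eqref{cond-meas-mart} is then nothing but the iterated conditioning relation $\E_\PP[\E_\PP[\,\cdot\,|\mathcal F(W)]\,|\,\mathcal F(W_n)]=\E_\PP[\,\cdot\,|\mathcal F(W_n)]$ applied to $\chi_{\pi_{W^c}^{-1}(\mathcal A)}$; the $(\mathcal F(W_n))$-martingale property of $\big((\pi_{W^c})_*[\PP(\cdot|X,W_n)]\big)_{n}$ follows at once, after taking a countable intersection over $n$ of the $\PP$-conull sets on which the conditional measures are defined.

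For the convergence statements I would first check that $\mathcal F(W)=\sigma\big(\bigcup_n\mathcal F(W_n)\big)$: a bounded Borel $B\subset W$ need not lie in any $W_n$, but for every locally finite $X$ one has $\#(X\cap(B\cap W_n))\uparrow\#(X\cap B)$ because $B=\bigcup_n(B\cap W_n)$, hence each generator $\#_B$ of $\mathcal F(W)$ is $\sigma\big(\bigcup_n\mathcal F(W_n)\big)$-measurable, and the two $\sigma$-algebras coincide. Then, for each fixed $\mathcal A$, the martingale $\E_\PP[\chi_{\pi_{W^c}^{-1}(\mathcal A)}|\mathcal F(W_n)]$ is bounded by $1$, and L\'evy's upward martingale convergence theorem gives convergence $\PP$-almost surely and, by dominated convergence, in every $L^p(\PP)$, $1\le p<\infty$, to $\E_\PP[\chi_{\pi_{W^c}^{-1}(\mathcal A)}|\mathcal F(W)]=\PP(\mathcal A|X,W)$; this is exactly \eqref{cond-mart}. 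The same reasoning applies verbatim with $\chi_{\pi_{W^c}^{-1}(\mathcal A)}$ replaced by $f\circ\pi_{W^c}$ for an arbitrary bounded Borel $f:\Conf(W^c)\to\R$, using $\int f\,d\big((\pi_{W^c})_*[\PP(\cdot|X,W_n)]\big)=\E_\PP[f\circ\pi_{W^c}|\mathcal F(W_n)](X)$, and yields $\int f\,d\big((\pi_{W^c})_*[\PP(\cdot|X,W_n)]\big)\to\int f\,d\PP(\cdot|X,W)$ $\PP$-almost surely for each such $f$.

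Finally, to obtain the \emph{almost sure weak} convergence \eqref{cond-mart-weak} I would pass from this ``one test function at a time'' convergence to simultaneous convergence: since $\Conf(W^c)$ is Polish, the space of probability measures on it is separable metrizable for the weak topology, so there exists a countable family $\{f_j\}_{j\in\N}$ of bounded continuous functions which is convergence-determining; intersecting the countably many $\PP$-conull events on which $\int f_j\,d\big((\pi_{W^c})_*[\PP(\cdot|X,W_n)]\big)\to\int f_j\,d\PP(\cdot|X,W)$ produces a single $\PP$-conull set on which the full weak convergence holds. I expect this last step --- upgrading the scalar almost sure convergence, valid for each individual test function, to almost sure convergence of the measure-valued sequence --- to be the only genuinely delicate point; it rests precisely on the separability of the Polish space $\Conf(W^c)$ (equivalently, on the existence of a countable convergence-determining class), everything else being the tower property together with L\'evy's theorem.
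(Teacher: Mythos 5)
Your proof is correct, and it takes a genuinely different (more elementary) route than the paper's. The paper proves the proposition in one line by invoking Lemma \ref{lem-meas-mart}, a general disintegration identity for the marginal of a conditional measure on a countable product, applied to the decomposition of $\Conf(E)$ as $\prod_i \Conf(W_i\setminus W_{i-1})\times\Conf(W^c)$; the content of that lemma is essentially the tower property expressed at the level of disintegrations. You bypass the product decomposition entirely by observing that, for each fixed Borel $\mathcal{A}\subset\Conf(W^c)$, the quantities $\big((\pi_{W^c})_*[\PP(\cdot|X,W_n)]\big)(\mathcal{A})$ and $\PP(\mathcal{A}|X,W)$ are versions of the scalar conditional expectations $\E_\PP\big[\chi_{\pi_{W^c}^{-1}(\mathcal{A})}\,\big|\,\mathcal{F}(W_n)\big]$ and $\E_\PP\big[\chi_{\pi_{W^c}^{-1}(\mathcal{A})}\,\big|\,\mathcal{F}(W)\big]$, so that \eqref{cond-meas-mart} is literally the tower property; this matches the paper's definition of a measure-valued martingale, which only requires the martingale identity setwise, with the exceptional null set allowed to depend on $\mathcal{A}$. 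What your write-up adds is an explicit treatment of the two convergence claims, which the paper dispatches with the phrase ``by martingale convergence theorem'': you verify that $\mathcal{F}(W)=\sigma\big(\bigcup_n\mathcal{F}(W_n)\big)$ via the monotone convergence $\#_{B\cap W_n}\uparrow\#_B$, which is exactly what is needed to identify the limit in L\'evy's upward theorem and obtain \eqref{cond-mart}, and you correctly isolate the only delicate point in \eqref{cond-mart-weak}, namely upgrading the test-function-by-test-function almost sure convergence to almost sure weak convergence of the measure-valued sequence by intersecting the conull sets over a countable convergence-determining family of bounded continuous functions on the Polish space $\Conf(W^c)$. Both arguments are sound; yours is the more self-contained and makes explicit the steps the paper leaves implicit.
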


\begin{remark*}
In general,  the statement \eqref{cond-mart}  cannot be strengthened to the claim that for $\PP$-almost every $X\in \Conf(E)$, we have
$
\Big((\pi_{W^c})_{*}[\PP(\cdot |X,  W_n)] \Big)  (\mathcal{A})  \xrightarrow{n\to\infty} \PP (\mathcal{A}| X, W),$
for {\bf all} Borel subsets $\mathcal{A} \subset \Conf(W^c).$

\end{remark*}

We prepare a simple lemma.
Let $\Omega_i$, $i=1, \dots, n, \dots$, and $\Omega^*$ be standard Borel spaces.
Fix $n\in\N$ and denote
\[
x: = (x_i)_{i=1}^\infty \an t =: (x_i)_{i\ge n+1},
\]
while $z$ will stand for the coordinate on $\Omega^*$.
Let $Q(\dd x\times \dd z)$ be a Borel probability measure on $(\prod_{i=1}^\infty \Omega_i ) \times \Omega^{*}$.  For any $n\in\N$, let $q_n(x_1, \cdots, x_n; \dd z)$ be the marginal on $\Omega^*$ of the conditional measure $Q(\dd t \times  dz| x_1, \cdots, x_n)$.
\begin{lemma}\label{lem-meas-mart} We have
\[
q_n(x_1, \cdots, x_n; \dd z) = \E[Q(dz| x_1, \cdots, x_n, t) | x_1, \cdots, x_n].
\]
\end{lemma}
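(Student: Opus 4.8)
The plan is to deduce the identity from the tower property for regular conditional probabilities. Since all the spaces $\Omega_i$ and $\Omega^*$ are standard Borel, so is $\mathcal{S} := (\prod_{i=1}^\infty \Omega_i) \times \Omega^*$, and regular conditional probabilities with respect to any sub-$\sigma$-algebra exist. I will write $\mathscr{G}_n := \sigma(x_1, \dots, x_n)$ and $\mathscr{G} := \sigma(x) = \sigma(x_1, \dots, x_n, t)$ for the corresponding sub-$\sigma$-algebras of the Borel $\sigma$-algebra of $\mathcal{S}$; clearly $\mathscr{G}_n \subset \mathscr{G}$. By definition, $q_n(x_1, \dots, x_n; \dd z)$ is a regular version of the conditional law of the coordinate $z$ given $\mathscr{G}_n$: indeed it is the $\Omega^*$-marginal of the conditional law of $(t, z)$ given $(x_1, \dots, x_n)$, and marginalisation commutes with conditioning. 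Likewise, $Q(\dd z \mid x_1, \dots, x_n, t)$ is a regular version of the conditional law of $z$ given $\mathscr{G}$.

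Next I would fix a bounded Borel function $f \colon \Omega^* \to \R$ and apply the tower property to $f(z) \in L^1(\mathcal{S}, Q)$ and the inclusion $\mathscr{G}_n \subset \mathscr{G}$, obtaining $\E_Q[f(z) \mid \mathscr{G}_n] = \E_Q\big[ \E_Q[f(z) \mid \mathscr{G}] \mid \mathscr{G}_n \big]$, $Q$-almost surely. The left-hand side equals $\int_{\Omega^*} f \, \dd q_n(x_1, \dots, x_n; \cdot)$; since $\E_Q[f(z) \mid \mathscr{G}] = \int_{\Omega^*} f \, \dd Q(\cdot \mid x_1, \dots, x_n, t)$, the right-hand side equals $\E_Q\big[ \int_{\Omega^*} f \, \dd Q(\cdot \mid x_1, \dots, x_n, t) \,\big|\, \mathscr{G}_n \big]$, which is $\int_{\Omega^*} f \, \dd\big( \E_Q[ Q(\cdot \mid x_1, \dots, x_n, t) \mid \mathscr{G}_n] \big)$. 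Hence, for each fixed $f$, the two measures $q_n(x_1, \dots, x_n; \cdot)$ and $\E_Q[Q(\cdot \mid x_1, \dots, x_n, t) \mid \mathscr{G}_n]$ on $\Omega^*$ assign $f$ the same integral for almost every $(x_1, \dots, x_n)$.

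To turn this into the asserted equality of measures, I would fix a countable Boolean algebra $\mathscr{D}$ generating the Borel $\sigma$-algebra of the standard Borel space $\Omega^*$ and run the previous step with $f = \ch_A$ for every $A \in \mathscr{D}$. Intersecting the resulting countably many full-measure sets yields a single full-measure set of values $(x_1, \dots, x_n)$ on which the two measures agree on all of $\mathscr{D}$, hence everywhere, since $\mathscr{D}$ is measure-determining; this is exactly the claimed identity. The only delicate point is this last bookkeeping step: the identity for a single test function holds only off an $f$-dependent null set, and promoting it to an equality of measures requires the reduction to a countable measure-determining family, which is where the standard Borel hypothesis on $\Omega^*$ enters. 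Beyond the existence of regular conditional probabilities, there is no genuine analytic difficulty.
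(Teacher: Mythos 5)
Your argument is correct, and it reaches the same identity as the paper by a slightly different route. The paper's proof works directly at the level of disintegrations: it writes out the three Rohlin decompositions $Q(\dd x\times\dd z)=Q_\infty(\dd x)\,Q(\dd z\mid x_1,\dots,x_n,t)$, $Q(\dd x\times\dd z)=Q_n(\dd x_1\cdots\dd x_n)\,Q(\dd t\times\dd z\mid x_1,\dots,x_n)$ and $Q_\infty(\dd x)=Q_n(\dd x_1\cdots\dd x_n)\,Q_\infty(\dd t\mid x_1,\dots,x_n)$, composes them to get $Q(\dd t\times\dd z\mid x_1,\dots,x_n)=Q_\infty(\dd t\mid x_1,\dots,x_n)\,Q(\dd z\mid x_1,\dots,x_n,t)$ by uniqueness of the disintegration, and then integrates out $t$; the almost-everywhere identification of the two kernels is thus obtained in one shot, with the countable-generation bookkeeping absorbed into the uniqueness statement for conditional measures on standard Borel spaces. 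You instead scalarize: you apply the tower property $\E_Q[f(z)\mid\mathscr{G}_n]=\E_Q[\E_Q[f(z)\mid\mathscr{G}]\mid\mathscr{G}_n]$ to bounded Borel test functions $f$ and then upgrade the resulting family of almost-sure scalar identities to an almost-sure identity of measures via a countable measure-determining algebra on $\Omega^*$. Both arguments rest on the same two facts (existence/uniqueness of regular conditional probabilities on standard Borel spaces, and the observation that marginalisation commutes with conditioning, which you correctly flag and which is needed to identify $q_n$ as the conditional law of $z$ given $\mathscr{G}_n$). Your version is marginally more elementary in that it never invokes uniqueness of disintegrations explicitly, at the cost of the explicit final reduction to a countable determining class — a step you rightly identify as the only delicate point and handle correctly. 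No gap.
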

\begin{proof}
 Denote by $Q_n$ the marginal measure of $Q$ on $\Omega_1 \times \cdots \times \Omega_n$. Let $Q_\infty$ be  the marginal measure of $Q$ on $\prod_{i=1}^\infty \Omega_i $.
 By definition of conditional measures, we have
\begin{align*}
Q(\dd x  \times \dd z)& =Q_\infty( \dd x) Q(dz| x_1, \cdots, x_n, t);
\\
Q(\dd x  \times \dd z)& =   Q_{n} (\dd x_1 \cdots \dd x_n)  Q(\dd t \times  dz| x_1, \cdots, x_n).
\end{align*}
And also
\begin{align*}
 \E[Q(dz| x_1, \cdots, x_n, t) | x_1, \cdots, x_n] =  \int_{ t \in \prod_{i=n+1}^\infty \Omega_i}   Q(dz| x_1, \cdots, x_n, t)  Q_\infty(\dd t  | x_1, \cdots,  x_n).
\end{align*}
Since
\[
Q_\infty( \dd x )  = Q_{n} (\dd x_1 \cdots \dd x_n)  Q_\infty(\dd t  | x_1, \cdots,  x_n),
\]
we get
\[
Q(\dd x \times \dd z) =  Q_{n} (\dd x_1 \cdots \dd x_n)  Q_\infty(\dd t  | x_1, \cdots,  x_n)  Q(dz| x_1, \cdots, x_n, t).
\]
Consequently,
\begin{align*}
Q(\dd t \times  dz| x_1, \cdots, x_n) =  Q_\infty(\dd t  | x_1, \cdots,  x_n)  Q(dz| x_1, \cdots, x_n, t).
\end{align*}
By definition, we have
\begin{align*}
q_n(x_1, \cdots, x_n; \dd z) &= \int_{t \in \prod_{i=n+1}^\infty \Omega_i} Q(\dd t \times  dz| x_1, \cdots, x_n)
\\
& = \int_{t\in \prod_{i=n+1}^\infty \Omega_i} Q_\infty(\dd t | x_1, \cdots,  x_n)  Q(dz| x_1, \cdots, x_n, t )
\\
&   =  \E[Q(dz| x_1, \cdots, x_n, t) | x_1, \cdots, x_n].
\end{align*}
\end{proof}

\begin{proof}[Proof of Proposition \ref{prop-mart-measure}]
Apply   Lemma \ref{lem-meas-mart} to $\Omega_i=  \Conf(W_i \setminus W_{i-1})$.
\end{proof}

Given  a bounded non-negative Borel function $g: E\rightarrow \R^{+}$,  let $S_g: \Conf(E)\rightarrow \R^{+} \cup\{+\infty\}$ denote the linear statistics defined, for $Z\in \Conf(E)$, by the formula $S_g(Z)=\sum_{x\in Z} g(x)$.
Denote by  $\E_\PP(S_g |X,  W)$  the conditional expectation of $S_g$ with respect to the sigma-algebra $\mathcal{F}(W)$.
\begin{proposition}\label{prop-L2}
If $g\restriction_W \equiv 0$ and $\E_\PP(S_g^2) <\infty$, then the sequence
\begin{align}\label{mart-nb}
\Big(\E_\PP(S_g |X,  W_n)\Big)_{n\in\N}
\end{align}
is an $(\mathcal{F}(W_n))_{n\in\N}$-adapted  $L^2(\Conf(E), \PP)$-bounded martingale defined on the probability space $(\Conf(E), \mathcal{F}(E), \PP)$.
\end{proposition}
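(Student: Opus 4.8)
The plan is to observe that the sequence \eqref{mart-nb} is simply the Doob--L\'evy martingale of the fixed integrable random variable $S_g$ relative to the increasing filtration $(\mathcal{F}(W_n))_{n\in\N}$, so that the proposition reduces to two standard facts: conditional expectations of a fixed $L^1$ random variable along an increasing filtration form an adapted martingale, and that martingale is $L^2$-bounded as soon as the random variable itself lies in $L^2$.

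First I would verify the inputs to this scheme. Since $g\ge 0$ is bounded and Borel, the linear statistic $S_g\colon\Conf(E)\to[0,+\infty]$ is a well-defined measurable function — it is the increasing limit of the finite sums $\sum_{x\in Z\cap A}g(x)$ over bounded Borel $A\subset E$ — and the hypothesis $\E_\PP(S_g^2)<\infty$ places $S_g$ in $L^2(\Conf(E),\PP)\subset L^1(\Conf(E),\PP)$; in particular $S_g<\infty$ $\PP$-almost surely. Next I would note that the filtration is genuinely increasing: if $W_1\subset\cdots\subset W_n\subset\cdots$, then every bounded Borel subset of $W_n$ is a bounded Borel subset of $W_{n+1}$, so $\mathcal{F}(W_n)\subset\mathcal{F}(W_{n+1})$ directly from the definition $\mathcal{F}(V)=\sigma(\#_A\colon A\subset V \text{ bounded Borel})$. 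Setting $M_n:=\E_\PP[S_g\mid\mathcal{F}(W_n)]$ — which, under the notational convention of \S\ref{sec-conditional}, is a version of $\E_\PP(S_g\mid X,W_n)$ — adaptedness is immediate since $M_n$ is $\mathcal{F}(W_n)$-measurable by construction, and the martingale identity follows from the tower property together with the nesting $\mathcal{F}(W_n)\subset\mathcal{F}(W_{n+1})$:
\[
\E_\PP[M_{n+1}\mid\mathcal{F}(W_n)]=\E_\PP\big[\E_\PP[S_g\mid\mathcal{F}(W_{n+1})]\mid\mathcal{F}(W_n)\big]=\E_\PP[S_g\mid\mathcal{F}(W_n)]=M_n.
\]

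For the $L^2$-boundedness I would apply the conditional Jensen inequality to the convex function $t\mapsto t^2$: $\PP$-almost surely $M_n^2\le\E_\PP[S_g^2\mid\mathcal{F}(W_n)]$, and taking expectations yields $\E_\PP(M_n^2)\le\E_\PP(S_g^2)$ for every $n$, so $\sup_n\|M_n\|_{L^2(\Conf(E),\PP)}\le\|S_g\|_{L^2(\Conf(E),\PP)}<\infty$, which is the asserted boundedness.

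I do not expect a genuine obstacle here; the statement is soft. The only points meriting a word of care are the measurability of $S_g$ as a function on $\Conf(E)$ and the identification of $\E_\PP(S_g\mid X,W_n)$ with the conditional expectation $\E_\PP[S_g\mid\mathcal{F}(W_n)]$, both of which are immediate from the definitions in \S\ref{sec-conditional}. I note in passing that the hypothesis $g\restriction_W\equiv 0$ plays no role in establishing either the martingale property or the $L^2$-bound — it ensures that $S_g$ is $\mathcal{F}(W^c)$-measurable and is what makes the proposition usable in the proof of Proposition \ref{prop-L1-cv}.
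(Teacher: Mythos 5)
Your proof is correct and takes essentially the same route as the paper's: the $L^2$-bound is obtained by the identical conditional Jensen argument, and the martingale property is just the Doob--L\'evy martingale of a fixed integrable random variable along the increasing filtration $(\mathcal{F}(W_n))_{n\in\N}$. The only (immaterial) difference is that the paper conditions the variable $\E_\PP(S_g \mid X, W)$ via the identity \eqref{cond-meas-mart} --- which is where the hypothesis $g\restriction_W\equiv 0$ enters --- whereas you condition $S_g$ itself; by the tower property these yield the same sequence, and your observation that $g\restriction_W\equiv 0$ is not needed for the statement itself (only for its use in Proposition \ref{prop-L1-cv}) is accurate.
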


\begin{proof}
Since $g\restriction_W \equiv 0$, by \eqref{cond-meas-mart}, we have
\[
\E_\PP(S_g |X,  W_n)   =  \E_{\PP}\Big[  \E_\PP(S_g | X, W)\Big| \mathcal{F}(W_n) \Big], \quad \text{for $\PP$-almost every $X\in \Conf(E)$.}
\]
By Jensen's inequality, we have
\[
[\E_\PP(S_g |X,  W_n) ]^2   \le \E_\PP(S_g^2  |X,  W_n), \quad \text{for $\PP$-almost every $X\in \Conf(E)$.}
\]
Therefore, for any $n\in\N$,
\[
\E_\PP [\E_\PP(S_g |X,  W_n) ]^2   \le \E_\PP(S_g^2 )< \infty.
\]
\end{proof}

\section{Mixing for M{\"o}bius transformations acting on  $(\Conf(\D), \PP_{K_\D})$ and proof of Lemma \ref{non-unif-sep}}\label{sample}

For any $n\in\N$ and  any $\varepsilon >0$, we have
\[
\PP\Big( \#(Z(\frak{f}_\D) \cap \{z \in \D: |z| \le \varepsilon\}) \ge n\Big) > 0.
\]
To conclude the proof of Lemma \ref{non-unif-sep}, it suffices to establish the ergodicity of the distribution of $Z(\frak{f}_\D)$ under the group $\Aut(\D)$ of M\"obius transformations, in other words, the group of isometries of the Lobachevsky plane. We prove mixing for  hyperbolic and parabolic one-dimensional subgroups of $\Aut(\D)$.
\begin{lemma}\label{thm-mixing}
If $\gamma \in \Aut(\D)$ is either hyperbolic or parabolic, then the dynamical system $(\Conf(\D), \PP_{K_\D}, \gamma)$ is strongly mixing.
\end{lemma}

\begin{proof}[Proof of Lemma \ref{thm-mixing}]
Fix an increasing sequence $r_k$ in the unit open interval $(0, 1)$ such that  $\lim_k r_k = 1$. Let $\mathcal{A}, \mathcal{B}$ be any two fixed measurable subsets in $\Conf(\D)$. For any $\varepsilon>0$, there exist $\mathcal{A}_\varepsilon, \mathcal{B}_\varepsilon \subset \Conf(\D)$ and a compact subset $C_\varepsilon \subset \D$, such that $\mathcal{A}_\varepsilon, \mathcal{B}_\varepsilon$ are both $\mathcal{F}(C_\varepsilon)$-measurable and 
\begin{align}\label{red-cpt}
\PP_{K_\D}(\mathcal{A} \Delta \mathcal{A}_\varepsilon) \le \varepsilon, \quad \PP_{K_\D}(\mathcal{B} \Delta \mathcal{B}_\varepsilon) \le \varepsilon.
\end{align}
Since $\PP_{K_\D}$ is $\gamma$-invariant, we have 
\begin{align}\label{unif-diff-set}
\sup_{n\in\N} \Big| \PP_{K_\D} (\mathcal{A} \cap \gamma^{-n}(\mathcal{B})) - \PP_{K_\D}(\mathcal{A}_\varepsilon \cap\gamma^{-n}(\mathcal{B}_\varepsilon))\Big|\le 2 \varepsilon.
\end{align}
By the assumption on $\gamma$, for any $k \in \N$, there exists $n_k\in \N$, such that 
\[
\gamma^{-n}(C_\varepsilon) \cap \D_{r_k} = \emptyset, \quad \text{for all $n\ge n_k$}.
\]
It follows that for any $n \ge n_k$, we have 
\[
\PP_{K_\D}(\mathcal{A}_\varepsilon \cap \gamma^{-n}(\mathcal{B}_\varepsilon)) =  \E_{\PP_{K_\D}} \Big(   \chi_{\gamma^{-n}(\mathcal{B}_\varepsilon)}  \E_{\PP_{K_\D}}   \big[  \chi_{\mathcal{A}_\varepsilon}| \mathcal{F}(\D \setminus \D_{r_k})\big]\Big).
\]
Therefore, for any $k$, we have
\begin{align*}
& \limsup_{n\to\infty}  \Big|\PP_{K_\D}(\mathcal{A}_\varepsilon \cap \gamma^{-n}(\mathcal{B}_\varepsilon)) - \PP_{K_\D}(\mathcal{A}_\varepsilon) \PP_{K_\D}(\mathcal{B}_\varepsilon)\Big| 
\\
 =& \limsup_{n\to\infty} \Big|  \E_{\PP_{K_\D}} \Big(   \chi_{\gamma^{-n}(\mathcal{B}_\varepsilon)}  \E_{\PP_{K_\D}}   \big[  \chi_{\mathcal{A}_\varepsilon}| \mathcal{F}(\D \setminus \D_{r_k})\big]\Big) - \E_{\PP_{K_\D}} \Big(   \chi_{\gamma^{-n}(\mathcal{B}_\varepsilon)}  \E_{\PP_{K_\D}}   \big[  \chi_{\mathcal{A}_\varepsilon}\big]\Big) \Big|
\\
\le & \limsup_{n\to\infty}  \E_{\PP_{K_\D}} \Big| \chi_{\gamma^{-n}(\mathcal{B}_\varepsilon)}  \E_{\PP_{K_\D}}   \big[  \chi_{\mathcal{A}_\varepsilon}| \mathcal{F}(\D \setminus \D_{r_k})\big] -  \chi_{\gamma^{-n}(\mathcal{B}_\varepsilon)}  \E_{\PP_{K_\D}}   \big[  \chi_{\mathcal{A}_\varepsilon}\big] \Big|
\\
\le &  \E_{\PP_{K_\D}} \Big| \E_{\PP_{K_\D}}   \big[  \chi_{\mathcal{A}_\varepsilon}| \mathcal{F}(\D \setminus \D_{r_k})\big] - \E_{\PP_{K_\D}}   \big[  \chi_{\mathcal{A}_\varepsilon}\big] \Big|.
\end{align*}
 Theorem \ref{main-thm3} now implies
\[
\lim_{k\to \infty} \E_{\PP_{K_\D}} \Big| \E_{\PP_{K_\D}}   \big[  \chi_{\mathcal{A}_\varepsilon}| \mathcal{F}(\D \setminus \D_{r_k})\big] - \E_{\PP_{K_\D}}   \big[  \chi_{\mathcal{A}_\varepsilon}\big] \Big|  = 0
\]
and hence 
\begin{align}\label{mix-cpt}
\limsup_{n\to\infty}  \Big|\PP_{K_\D}(\mathcal{A}_\varepsilon \cap \gamma^{-n}(\mathcal{B}_\varepsilon)) - \PP_{K_\D}(\mathcal{A}_\varepsilon) \PP_{K_\D}(\mathcal{B}_\varepsilon)\Big|  =0.
\end{align}
Combining \eqref{red-cpt}, \eqref{unif-diff-set} and \eqref{mix-cpt}, we obtain 
\[
\lim_{n\to\infty} \PP_{K_\D}(\mathcal{A} \cap \gamma^{-n}(\mathcal{B})) = \PP_{K_\D}(\mathcal{A}) \PP_{K_\D}(\mathcal{B})
\]
and thus complete the proof of the strong mixing property of the dynamical system $(\Conf(\D), \PP_{K_\D}, \gamma)$.
\end{proof}

\begin{proof}[Proof of Lemma \ref{non-unif-sep}]
We need to show that almost surely, 
\[
\sup_{\gamma \in \Aut(\D)}  \# \Big(Z(\frak{f}_\D) \cap \gamma^{-1}(\D_\varepsilon) \Big) = \infty, \quad \text{for all $\varepsilon \in (0, 1) \cap \Q$}.
\]
Since $(0, 1)\cap \Q$ is countable, we only need to show that for any fixed $\varepsilon \in (0, 1)\cap \Q$, 
\begin{align}\label{as-inf-pts}
\sup_{\gamma \in \Aut(\D)}  \# \Big(Z(\frak{f}_\D) \cap \gamma^{-1}(\D_\varepsilon) \Big) = \infty  \quad\text{almost surely}.
\end{align}

Now fix  any $\varepsilon \in (0, 1)\cap \Q$. The distribution of $Z(\frak{f}_\D) \cap \D_\varepsilon$ is given by the determinantal measure induced by the kernel $\chi_{\D_\varepsilon}K_\D \chi_{\D_\varepsilon}$. Since $\rank (\chi_{\D_\varepsilon}K_\D \chi_{\D_\varepsilon}) = \infty$, for any $\ell \in \N$, we have 
\[
\PP(\#(Z(\frak{f}_\D) \cap \D_\varepsilon ) \ge \ell) > 0.
\]
If $\gamma_0 \in \Aut(\D)$ is hyperbolic or parabolic, then Lemma\ref{thm-mixing} implies that the dynamical system $(\Conf(\D), \PP_{K_\D}, \gamma_0)$ is ergodic, whence for any $\ell\in\N$,  the relation
\[
\#(Z(\frak{f}_\D) \cap \gamma_0^{-n}(\D_\varepsilon)) \ge \ell
\]
holds for infinitely many $n$'s on a set of full measure. Since $\ell$ is arbitrary, the desired equality \eqref{as-inf-pts} follows.
\end{proof}

We conclude this section with a conjecture on the asymptotic density of zeros of Gaussian analytic functions.
Let $F$ be a finite subset of the unit circle $\mathbb{T}$ and $\mathfrak{s}_F$ be the corresponding Stolz star domain, which, by definition, is the union, over all $z \in F$, of the Euclidean convex hulls of the unions $\{z\} \cup \{w \in \D:  |w| \le 1/\sqrt{2}\}$ . Let  $\{I_k\}_k$ be the  complementary arcs of the subset $F$ in the unit circle $\mathbb{T}$, and set 
$$\widehat{k}(F): = 1 - \sum_{k} \frac{| I_k|}{2\pi} \log \frac{|I_k|}{2 \pi}.$$
For a countable subset $X\subset \D$ without accumulation points in the interior of the disc, following
   \cite[Chapter 4, Definition 4.9]{HKZ-Bergman}, write 
   \[
D^{+}(X) : = \frac{1}{2}\limsup_{\widehat{k}(F)\to\infty} \frac{\sum_n \{1-|x|^2: x \in \mathfrak{s}_F\cap X \}}{\widehat{k}(F)}, 
\quad D^{-}(X) : = \frac{1}{2}\liminf_{\widehat{k}(F)\to\infty} \frac{\sum_n \{1-|x|^2: x \in \mathfrak{s}_F\cap X \}}{\widehat{k}(F)}.
\]

For $p>1$, let $A^p(\D)$ be the $L^p$-version of Bergman space. Theorem 4.31 and Corollary 4.38 in \cite{HKZ-Bergman} state that 
\[
\text{$X$ is an $A^{2 + \varepsilon}(\D)$-zero set for some $\varepsilon >0$ if and only if $D^{+}(X) < 1/2$;}
\]
\[
\text{$X$ is an $A^{2 - \varepsilon}(\D)$-zero set for all $\varepsilon >0$ if and only if $D^{+}(X) \le 1/2$}.
\] 
\begin{conj}
$D^{+}(Z(\frak{f}_\D))= 1/2$ almost surely.
\end{conj}

%
%

\bigbreak
\noindent Alexander I. Bufetov\\
\noindent  Aix-Marseille Universit{\'e}, Centrale Marseille, CNRS, Institut de Math{\'e}matiques de Marseille, UMR7373, \\
  39 Rue F. Joliot Curie 13453, Marseille, France; \\
\noindent Steklov  Mathematical Institute of RAS, Moscow, Russia;\\
\noindent Institute for Information Transmission Problems, Moscow, Russia; \\
\noindent The Chebyshev Laboratory, Saint-Petersburg State University, Saint-Petersburg, Russia.\\
\noindent  bufetov@mi.ras.ru, alexander.bufetov@univ-amu.fr

\bigbreak
\noindent Yanqi Qiu \\
\noindent
Institute of Mathematics, AMSS, CAS and Hua Loo-Keng Key Laboratory of Mathematics, Beijing, China.\\
 CNRS, Institut de Math{\'e}matiques de Toulouse, Universit{\'e} Paul Sabatier, Toulouse, France.\\
\noindent yqi.qiu@gmail.com

\bigbreak
\noindent Alexander Shamov \\
\noindent
Department of Mathematics, Weizmann Institute of Science, Israel.\\
\noindent trefoils@gmail.com

\end{document}